\newtheorem{Theo}{Theorem}[section]
\newtheorem{Lemma}[Theo]{Lemma}
\newtheorem{Prop}[Theo]{Proposition}
\theoremstyle{definition}
\numberwithin{equation}{section}
\begin{document}

\title{On the multiplication groups of three-dimensional topological loops}
\author{\'Agota Figula} 
\date{}
\maketitle

\begin{abstract}
We clarify the structure of nilpotent Lie groups which are multiplication groups of $3$-dimensional simply connected  topological loops and prove that non-solvable Lie groups acting minimally on $3$-dimensional manifolds cannot be the multiplication group of $3$-dimensional topological loops. Among the nilpotent Lie groups for any filiform groups ${\mathcal F}_{n+2}$ and ${\mathcal F}_{m+2}$ with 
$n, m > 1$, the direct product ${\mathcal F}_{n+2} \times \mathbb R$ and the direct product ${\mathcal F}_{n+2} \times _Z {\mathcal F}_{m+2}$ with amalgamated center $Z$ 
 occur as the multiplication group of $3$-dimensional topological loops. To obtain this result we classify all 
 $3$-dimensional simply connected topological loops having a $4$-dimensional nilpotent Lie group as the group topologically generated by the left translations.  
\end{abstract}

\noindent
{\footnotesize {2000 {\em Mathematics Subject Classification:} 57S20, 57M60, 20N05, 22F30, 22E25.}}

\noindent
{\footnotesize {{\em Key words and phrases:} multiplication group of loops, topological transformation group, filiform Lie group. }} 

\noindent
{\footnotesize {{\em Thanks: } The work of the first author was supported by the Hungarian Scientific Research Fund (OTKA) Grant PD 77392 and by the EEA and Norway Grants (Zolt\'an Magyary Higher Education Public Foundation).} }

\section{Introduction}

The multiplication group $Mult(L)$ and the inner mapping group $Inn(L)$ of a loop $L$ 
introduced in \cite{albert1}, \cite{bruck}, are important tools for research in loop theory since they reflect strongly the structure of $L$. In particular, there is a strict correspondence between the normal subloops of $L$ and certain normal subgroups of $Mult(L)$.   
Hence, it is an interesting question which groups can be represented as multiplication groups  of loops (\cite{kepka}, \cite{kepka2},  \cite{vesanen2}). A purely group theoretical characterization of multiplication groups is given in \cite{kepka}. 

The mainly studied topological loops $L$ are those which are realized as
sharply transitive sections in Lie groups $G$ such that the left translations
of $L$ generate $G$ (cf. \cite{loops}, \cite{figula1}, \cite{figula2}). For most of these loops the group $Mult(L)$ 
generated by all left and right translations has infinite dimension. In \cite{figula} it is
shown that the condition for $Mult(L)$ to be a Lie group is a strong restriction. Namely, for $2$-dimensional topological loops $L$ the group $Mult(L)$ is a
Lie group if and only if it is an elementary filiform Lie group of dimension $\ge 4$. In this paper we show that for $3$-dimensional loops for which the group $Mult(L)$ is nilpotent the situation changes radically. Namely, there is no
topological loop $L$ having the $4$-dimensional filiform Lie group as the group
$Mult(L)$ (cf. Proposition \ref{filiform4dim}), but there is a plethora of loops $L$ for which $Mult(L)$ is a nilpotent
but not filiform Lie group (cf. Propositions \ref{directproduct}, \ref{directproductsfiliform}). In contrast to this a simple proof shows that non-solvable Lie groups acting minimally on $3$-dimensional manifolds cannot occur as the multiplication group of $3$-dimensional topological loops (cf. Theorem \ref{minimal}). A $3$-dimensional connected simply connected topological loop $L$ such that $Mult(L)$ is a Lie group is homeomorphic either to the $3$-sphere $S^3$ or to the affine space $\mathbb R^3$. We prove that if $L$ is  quasi-simple and homeomorphic to $S^3$, then the group $Mult(L)$ is either quasi-simple or isomorphic to 
$Spin_3 \rtimes SO_3(\mathbb R)$. If a quasi-simple connected topological loop $L$ is homeomorphic to $\mathbb R^3$ and its 
multiplication group $Mult(L)$ is not quasi-simple, then one of the following holds: 
If $Mult(L)$ is semi-simple, then it is the semidirect product $\widetilde{PSL_2(\mathbb R)} \rtimes PSL_2(\mathbb R)$, where 
the universal covering $\widetilde{PSL_2(\mathbb R)}$ of $PSL_2(\mathbb R)$ is homeomorphic to $\mathbb R^3$. 
If $Mult(L)$ is not semi-simple, then it is the semidirect product $\mathbb R^3 \rtimes S$, where $S$ is isomorphic either to the group $Spin_3(\mathbb R)$ or to $SL_3(\mathbb R)$ respectively to $PSL_2(\mathbb R)$ (cf. Proposition \ref{nonsolvable}).

As applications of Proposition \ref{2dimensionalcentre} first we determine all $3$-dimensional simply connected 
topological loops $L$ having a $4$-dimensional nilpotent Lie group   
as the group topologically generated by their left translations. The loop multiplications in these cases are uniquely determined by one continuous real function of one or two variables. The multiplication groups of these loops are Lie groups precisely if the continuous function in the loop multiplication derives from exponential polynomials. 
If $L$ has $2$-dimensional centre, then we show that the groups $\mathbb R \times {\mathcal F}_{n+2}$,
$n \ge 2$, where ${\mathcal F}_{n+2}$ is the $n+2$-dimensional elementary filiform Lie group, are multiplication groups of $L$ 
(cf. Proposition \ref{directproduct}). Moreover, among the
at most $5$-dimensional nilpotent Lie groups only the group $\mathbb R \times {\mathcal F}_4$ occurs as the multiplication group of $3$-dimensional topological loops with $2$-dimensional centre.  
Also the direct products of two elementary filiform Lie groups with amalgamated centers are multiplication groups of $L$. These loops $L$ have $1$-dimensional centre $Z(L)$ such that $L/Z(L)$ is the
abelian group $\mathbb R^2$ (cf. Proposition \ref{directproductsfiliform}).

\section{Preliminaries}

A binary system $(L, \cdot )$ is called a loop if there exists an element 
$e \in L$ such that $x=e \cdot x=x \cdot e$ holds for all $x \in L$ and the 
equations $a \cdot y=b$ and $x \cdot a=b$ for given $a,b \in L$ have precisely one solution which we denote 
by $y=a \backslash b$ and $x=b/a$. 
\newline
The left and right translations $\lambda _a= y \mapsto a \cdot y :L \to L$ and 
$\rho _a: y \mapsto y \cdot a:L \to L$, $a \in L$,  are permutations of $L$. 
\newline 
The permutation group $Mult(L)=\langle \lambda_a, \rho_a; \ a \in L \rangle $ is called the multiplication group of $L$. 
The stabilizer of the identity element $e \in L$ in $Mult(L)$ is denoted by $Inn(L)$, and  $Inn(L)$ is called  the inner mapping group of $L$. 

Let $K$ be a group, $S \le K$, and let $A$ and $B$ be two left transversals to $S$ in $K$ (i.e. two systems of representatives for the left cosets of the subgroup $S$ in $K$). We say that $A$ and $B$ are $S$-connected if $a^{-1} b^{-1} a b \in S$ for every $a \in A$ and $b \in B$. 
By $C_K(S)$ we denote the core of $S$ in $K$ (the largest normal subgroup of $K$ contained in $S$). If $L$ is a loop, then $\Lambda (L)=\{ \lambda _a; \ a \in L \}$ and $R (L)=\{ \rho _a; \ a \in L \}$ are $Inn(L)$-connected transversals in the group $Mult(L)$ and the core of $Inn(L)$ in $Mult(L)$ is trivial. The connection between multiplication groups of loops and transversals is given in \cite{kepka} by Theorem 4.1.  This theorem yields the following 

\begin{Lemma} \label{kepkalemma} Let $L$ be a loop and $\Lambda (L)$ be the set of left translations 
of $L$. Let $K$ be a group containing $\Lambda (L)$ and $S$ be a  subgroup of $G$ with $C_K(S)=1$ 
such that $\Lambda (L)$ is a left transversal to $S$ in $K$. The group $K$ is isomorphic to the 
multiplication group 
$Mult(L)$ of $L$ if and only if  there is a left transversal $T$ to $S$ in $K$ such that $\Lambda (L)$ and $T$ are $S$-connected and $K=\langle \Lambda (L), T \rangle $. In this case $S$ is isomorphic to the inner mapping group $Inn(L)$ of $L$.  
\end{Lemma}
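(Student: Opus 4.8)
The plan is to deduce the statement from Theorem~4.1 of \cite{kepka}, which characterises abstract multiplication groups: a group $K$ with a subgroup $S$ satisfying $C_K(S)=1$ is isomorphic to the multiplication group of a loop precisely when $K$ possesses two $S$-connected left transversals that together generate $K$; moreover the loop is reconstructed from such data, its underlying set being the coset space $K/S$ with product read off from the two systems of coset representatives, and the two transversals then correspond to the sets of left and of right translations of that loop.

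For the implication ``$\Rightarrow$'' I would use the facts recalled in the Preliminaries: $\Lambda(L)$ and $R(L)$ are $Inn(L)$-connected left transversals to $Inn(L)$ in $Mult(L)$, the core of $Inn(L)$ in $Mult(L)$ is trivial, and $Mult(L)=\langle \Lambda(L),R(L)\rangle$. Suppose $\varphi : K\to Mult(L)$ is an isomorphism carrying the distinguished copy of $\Lambda(L)$ in $K$ onto $\Lambda(L)\subseteq Mult(L)$ and $S$ onto $Inn(L)$; this is the normalisation implicit in the statement, and $C_K(S)=1$ is what makes $K$ act faithfully on $K/S\cong L$, so that $S$ is recognised as the stabiliser of the identity and the induced loop structure on the transversal $\Lambda(L)$ is $L$ itself. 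Then $T:=\varphi^{-1}(R(L))$ is a left transversal to $S$ in $K$, the pair $\Lambda(L),T$ is $S$-connected, and $\langle\Lambda(L),T\rangle=K$, since all three properties transfer along $\varphi^{-1}$; and $S=\varphi^{-1}(Inn(L))\cong Inn(L)$.

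For ``$\Leftarrow$'', given such a $T$ the triple $(S,\Lambda(L),T)$ satisfies the hypotheses of Theorem~4.1 of \cite{kepka} (trivial core, $S$-connectedness, and $\langle\Lambda(L),T\rangle=K$), whence $K$ is isomorphic to $Mult(M)$ for the loop $M$ reconstructed from it, with $S$ corresponding to $Inn(M)$. It remains to see $M\cong L$: because the transversal used in the reconstruction is exactly the set $\Lambda(L)$ of left translations of the given loop $L$ --- so that $\lambda_a\lambda_bS=\lambda_{a\cdot b}S$ in $K$ --- the bijection $\lambda_aS\mapsto a$ is a loop isomorphism from $M$ onto $L$. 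Composing, $K\cong Mult(L)$ and $S\cong Inn(L)$. The step I expect to require the most care is precisely this bookkeeping of identifications: ensuring on the one hand that ``$K$ is isomorphic to $Mult(L)$'' is read compatibly with the fixed embedding $\Lambda(L)\hookrightarrow K$ and the fixed subgroup $S$, and on the other hand that the loop structure which the Niemenmaa--Kepka construction puts on the transversal $\Lambda(L)$ is the loop $L$ and not merely an isotope of it; the hypothesis $C_K(S)=1$ enters only to secure the faithful action on $K/S$ underlying both checks, and everything else is transport of structure along $\varphi$ together with a direct appeal to \cite{kepka}.
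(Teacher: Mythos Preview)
Your proposal is correct and matches the paper's approach exactly: the paper gives no proof at all for this lemma, merely stating that it ``yields'' from Theorem~4.1 of \cite{kepka}. Your write-up simply makes explicit the two directions of that deduction, and your closing caveat about the bookkeeping of identifications (reading ``$K\cong Mult(L)$'' compatibly with the fixed embedding $\Lambda(L)\hookrightarrow K$ and the fixed subgroup $S$) is exactly the point one has to be careful about when unpacking the citation.
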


\noindent
The kernel of a homomorphism $\alpha :(L, \cdot ) \to (L', \ast )$ of a loop $L$ into a loop $L'$ is a normal subloop $N$ of $L$, i.e. a subloop of $L$ such that
\[ x \cdot N=N \cdot x, \ \ (x \cdot N) \cdot y= x \cdot (N \cdot y), \ \ x \cdot ( y \cdot N)=(x \cdot y) \cdot N. \] 

The centre $Z(L)$ of a loop $L$ consists of all elements $z$ which satisfy the equations $z x \cdot y=z \cdot x y, \ x \cdot y z=x y \cdot z, \ x z \cdot y=x \cdot z y, \ z x =x z$ for  all $x,y \in L$.
If we put $Z_0=e$, $Z_1=Z(L)$ and $Z_i/Z_{i-1}=Z(L/Z_{i-1})$, then we obtain a series of normal subloops of $L$. If $Z_{n-1}$ is a proper subloop of $L$ but $Z_n=L$, then $L$ is centrally nilpotent of class $n$.
In \cite{bruck} it was proved that the nilpotency of the multiplication group $Mult(L)$ of $L$ implies that $L$ is centrally nilpotent. 

\begin{Lemma} \label{brucklemma} Let $L$ be a loop with multiplication group $Mult(L)$ and identity element $e$. 
\newline
\noindent
(i) Let $\alpha $ be a homomorphism of the loop $L$ onto the loop $\alpha (L)$ with kernel $N$. Then $N$ is a normal subloop  of $L$ and $\alpha $ induces a homomorphism of the group $Mult(L)$ onto the group $Mult(\alpha (L))$. 

Denote by $M(N)$ the set $\{ m \in Mult(L); \ x N=m(x) N \ \hbox{for  all} \ x \in L \}$. 
Then $M(N)$ is a normal subgroup of $Mult(L)$ containing the multiplication group $Mult(N)$ of the loop $N$, the multiplication group of the factor loop $L/N$ is isomorphic to $Mult(L)/M(N)$.  
\newline 
\noindent 
(ii) For every normal subgroup $\mathcal{N}$ of $Mult(L)$ the orbit $\mathcal{N}(e)$ is a normal subloop of $L$. Moreover, $\mathcal{N} \le M(\mathcal{N}(e))$.  
\end{Lemma}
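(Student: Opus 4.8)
The plan is to treat the statement as a purely loop-theoretic fact (it is Bruck's) and derive everything from the way a homomorphism intertwines with the left and right translations.

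\emph{Part (i).} The key observation is that a surjective loop homomorphism $\alpha\colon L\to\alpha(L)$ satisfies $\alpha\circ\lambda_a=\lambda_{\alpha(a)}\circ\alpha$ and $\alpha\circ\rho_a=\rho_{\alpha(a)}\circ\alpha$ for every $a\in L$, which merely rephrases $\alpha(a\cdot y)=\alpha(a)\ast\alpha(y)$; the same relations then hold with $\lambda_a^{-1},\rho_a^{-1}$ in place of $\lambda_a,\rho_a$. Given $m\in Mult(L)$, write $m$ as a word in the $\lambda_a^{\pm1},\rho_a^{\pm1}$ and let $m^\ast\in Mult(\alpha(L))$ be the corresponding word in the $\lambda_{\alpha(a)}^{\pm1},\rho_{\alpha(a)}^{\pm1}$; then $\alpha\circ m=m^\ast\circ\alpha$. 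Since $\alpha$ is onto, $m^\ast$ is the \emph{unique} element of $Mult(\alpha(L))$ with $\alpha\circ m=m^\ast\circ\alpha$, so $m\mapsto m^\ast$ is well defined, is visibly a group homomorphism, and is onto because it hits all generators of $Mult(\alpha(L))$. That $N$ is a normal subloop is already recorded in the text as a property of kernels. Applying the above to the canonical projection $\pi\colon L\to L/N$ gives a surjection $Mult(L)\to Mult(L/N)$ whose kernel is $\{m:\pi\circ m=\pi\}=\{m:m(x)N=xN\ \text{for all}\ x\}=M(N)$; hence $M(N)\trianglelefteq Mult(L)$ and $Mult(L)/M(N)\cong Mult(L/N)$ by the first isomorphism theorem. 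Finally, for $n\in N$ the image of $\lambda_n$ in $Mult(L/N)$ is left translation by $\pi(n)$, the identity of $L/N$, hence is the identity, and similarly for $\rho_n$; thus $\lambda_n,\rho_n\in M(N)$, and the subgroup they generate, which restricts onto $Mult(N)$ on the subloop $N$, lies in $M(N)$.

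\emph{Part (ii).} Put $H=\mathcal N(e)$ and use normality of $\mathcal N$ in the form $m(H)=(m\mathcal N)(e)=(\mathcal N m)(e)=\mathcal N(m(e))$ for every $m\in Mult(L)$. If $a\in H$, say $a=g(e)$ with $g\in\mathcal N$, then $\mathcal N(a)=(\mathcal N g)(e)=\mathcal N(e)=H$; taking $m=\lambda_a$ and $m=\rho_a$ shows $\lambda_a(H)=\mathcal N(a)=H=\rho_a(H)$, so $\lambda_a,\rho_a$ restrict to permutations of $H$ and therefore $H$ contains $e$ and is closed under $\cdot$, $\backslash$ and $/$, i.e.\ $H$ is a subloop. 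Normality of $H$ now falls out of the same identity: $xH=\lambda_x(H)=\mathcal N(x)=\rho_x(H)=Hx$, while $\rho_y\lambda_x(H)$, $\lambda_x\rho_y(H)$, $\lambda_x\lambda_y(H)$ and $\lambda_{xy}(H)$ all equal $\mathcal N(xy)$ --- exactly the three defining conditions for a normal subloop. Lastly, for $m\in\mathcal N$ and $x\in L$ one has $m(x)\in\mathcal N(x)$ and $x\in\mathcal N(m(x))$ (using $m^{-1}\in\mathcal N$), so $\mathcal N(m(x))=\mathcal N(x)$ and hence $m(x)H=\mathcal N(m(x))=\mathcal N(x)=xH$; thus $m\in M(H)$, which is the asserted inclusion $\mathcal N\le M(\mathcal N(e))$.

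Once the intertwining relations are in place the argument is bookkeeping, and I expect no serious obstacle. The only point that genuinely needs care is the well-definedness of $m\mapsto m^\ast$ in part (i): two different words representing the same $m$ could a priori yield different $m^\ast$, and resolving this is precisely where the surjectivity of $\alpha$ is used. The analogous soft spot in part (ii) --- checking that $\mathcal N(e)$ satisfies the division axioms, not just closure under multiplication --- collapses once one notices that each $\lambda_a,\rho_a$ with $a\in\mathcal N(e)$ maps $\mathcal N(e)$ onto itself.
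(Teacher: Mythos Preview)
Your proof is correct. The paper itself does not prove this lemma at all: its ``proof'' consists solely of citations to Albert (for part (i)) and Bruck (for part (ii)). So you have supplied a self-contained argument where the paper defers to the literature. Your approach --- pushing translations through the homomorphism via the intertwining relations $\alpha\circ\lambda_a=\lambda_{\alpha(a)}\circ\alpha$, identifying $M(N)$ as the kernel of the induced map $Mult(L)\to Mult(L/N)$, and in part (ii) reducing all normality identities to the single observation $m(\mathcal N(e))=\mathcal N(m(e))$ --- is the standard and correct route, and is essentially what one finds in Albert and Bruck. One small point of phrasing: the containment ``$Mult(N)\subseteq M(N)$'' is, as you implicitly note, a mild abuse, since $Mult(N)$ acts on $N$ while $M(N)$ acts on $L$; what is really meant (and what you prove) is that the subgroup of $Mult(L)$ generated by $\lambda_n,\rho_n$ for $n\in N$ lies in $M(N)$.
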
 
\begin{proof} The assertion of (i) is proved by A. Albert in \cite{albert1} (Theorems 3, 4 and 5, pp. 513-515). The assertion (ii) is proved by Bruck in \cite{bruck2}, p. 62.  
\end{proof} 

\bigskip
\noindent
A loop $L$ is called topological if $L$ is a topological space and the binary operations $(x, y) \mapsto x \cdot y$, $(x, y) \mapsto  x \backslash y, (x,y) \mapsto y/x :L \times L \to L$  are continuous.
Every connected topological loop $L$ having a Lie group  as the group topologically generated by the left translations is obtained on a homogeneous space $G/H$, where $G$ is a connected Lie group, $H$ is a closed subgroup containing no non-trivial normal subgroup of $G$ and $\sigma :G/H \to G$ is  a continuous sharply transitive section with
$\sigma (H)=1 \in G$ such that the subset $\sigma (G/H)$ generates $G$.  The multiplication  of  $L$ on the manifold  $G/H$  is  defined by
$x H \ast y H=\sigma (x H) y H$ and  the group $G$ is  the group topologically generated by the left translations of $L$. Moreover, the subgroup $H$ is the stabilizer of the identity element
$e \in L$ in the group $G$.

A quasi-simple connected Lie group is a connected Lie group $G$ such
that any normal subgroup of $G$ is discrete and central in $G$. A semi-simple
connected Lie group $G$ has the form $G=G_1 \cdot G_2 \cdots G_r$, where $G_i$ are normal quasi-simple connected Lie 
subgroups such that $G_i \cap G_j$ is a discrete central subgroup of $G$.  
A connected loop $L$ is quasi-simple if any normal subloop of $L$ is discrete in
$L$. According to \cite{hofmann2}, p. 216, all discrete normal subloops of a connected loop
are central.

The elementary filiform Lie group ${\mathcal F}_{n+2}$ is the 
 simply connected Lie group of dimension $n+2 \ge 3$ whose Lie algebra 
is elementary filiform, i.e. it has a basis $\{ e_1, \cdots , e_{n+2} \}$  such that  $[e_1, e_i]= e_{i+1}$ for 
$2 \le i \le n+1$ and all other Lie brackets are zero.  A $2$-dimensional simply connected loop $L_{\mathcal F}$ is called 
an elementary filiform loop if its multiplication group is an elementary filiform group  ${\mathcal F}_{n+2}$, $n \ge 2$.   

A foliation is a decomposition of a manifold as a union of disjoint submanifolds of smaller dimension.

\section{On the multiplication group of $3$-dimensional topological loops}

Let $L$ be a topological loop on a connected $3$-dimensional manifold such that the group $Mult(L)$ topologically generated by all left and right translations of $L$ is a Lie group. The loop $L$ itself is a $3$-dimensional homogeneous space with respect to the transformation group $Mult(L)$. 
Since there does not exist a multiplication with identity on the sphere $S^2$ a simply connected $3$-dimensional topological loop $L$ having a Lie group as its multiplication group is homeomorphic to $\mathbb R^3$ or to $S^3$ (see \cite{gorbatsevich}, p. 210). 

A transitive action of a connected Lie group $G$ on a manifold $M$ is called minimal, if it is locally effective and if $G$ does not contain subgroups acting transitively on $M$. The minimal actions of non-solvable Lie groups on $3$-dimensional manifolds are given in \cite{gorbatsevich}, Table 1, p. 201. 

\begin{Theo} \label{minimal} There does not exist $3$-dimensional proper connected topological loop $L$ such that its multiplication group $Mult(L)$ acts minimally on the manifold $L$ and $Mult(L)$ is a non-solvable Lie group.  
\end{Theo}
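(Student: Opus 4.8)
The plan is to play off the list of minimal non-solvable actions in \cite{gorbatsevich}, Table~1, p.~201, against two structural facts about $\big(Mult(L),Inn(L)\big)$ recalled above: $Inn(L)$ is the stabilizer of $e$ in $Mult(L)$ and the core of $Inn(L)$ in $Mult(L)$ is trivial, and, by Lemma~\ref{brucklemma}, every connected normal subgroup of $Mult(L)$ produces a normal subloop of $L$ on which it acts. Since $Mult(L)$ contains all left translations of $L$ it acts transitively on $L$; minimality says in addition that $Mult(L)$ has no proper closed subgroup acting transitively on $L$ (so that already $\Lambda(L)$ generates $Mult(L)$). Hence $\big(Mult(L),Inn(L)\big)$ is one of the finitely many pairs of that table, and it suffices to rule each of them out.

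First I would treat the entries with $Inn(L)$ discrete. There $Mult(L)\to Mult(L)/Inn(L)\cong L$ is a covering of the connected Lie group $Mult(L)$, and, $L$ being a topological loop, the sharply transitive section $\sigma$ is continuous; a covering of a connected space admitting a continuous section is a homeomorphism, so $Inn(L)=1$. Then $\Lambda(L)=\sigma\big(Mult(L)/Inn(L)\big)=Mult(L)$ is a subgroup of $Mult(L)$, the left translations of $L$ form a group, and $L$ is a group, contradicting that $L$ is proper. Because a semi-simple Lie group of dimension greater than $3$ acting transitively on a $3$-manifold always contains a smaller transitive subgroup, all semi-simple candidates for $Mult(L)$ reduce to this discrete-stabilizer case and are eliminated here.

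It remains to handle the entries with $\dim Inn(L)\ge 1$. For these I would pass to the solvable radical $\mathcal R$ of $Mult(L)$. As $\mathcal R$ is a connected normal subgroup, $N:=\mathcal R(e)$ is a normal subloop of $L$; minimality together with $\mathcal R\subsetneq Mult(L)$ prevents $\mathcal R$ from being transitive, so $\dim N\le 2$, while $N\ne\{e\}$, for otherwise $\mathcal R\subseteq Inn(L)$ and $\mathcal R\trianglelefteq Mult(L)$ would give $\mathcal R\subseteq C_{Mult(L)}\big(Inn(L)\big)=1$, making $Mult(L)$ solvable. Thus $L/N$ is a connected topological loop of dimension $1$ or $2$, and by Lemma~\ref{brucklemma} its multiplication group is the Lie group $Mult(L)/M(N)$ with $M(N)\supseteq\mathcal R$, hence a continuous quotient of the semi-simple group $Mult(L)/\mathcal R$; being such a quotient it is semi-simple or trivial. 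It cannot be trivial, since $L/N$ has positive dimension; and it cannot be a non-trivial semi-simple Lie group, because a connected $1$-dimensional topological loop is a Lie group whose multiplication group is $1$-dimensional, whereas by \cite{figula} the multiplication group of a $2$-dimensional topological loop, when it is a Lie group, is an elementary filiform Lie group of dimension at least $4$. This contradiction finishes the proof.

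The serious step is the last one, the case $\dim Inn(L)\ge 1$: neither a dimension count nor the covering argument is available, and one is forced to strip off the solvable radical, descend to the quotient loop $L/N$ of dimension $1$ or $2$, and invoke the description of the possible Lie multiplication groups of low-dimensional topological loops, in particular the result of \cite{figula} in dimension $2$. A subordinate point that also needs checking against Table~1 of \cite{gorbatsevich} is that the semi-simple possibilities for $Mult(L)$ are exactly the $3$-dimensional simple groups acting simply transitively, so that they fall under the discrete-stabilizer case.
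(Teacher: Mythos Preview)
Your argument is essentially the paper's: pass to the radical $\mathcal R$, use Lemma~\ref{brucklemma} to get a normal subloop $N=\mathcal R(e)$ of dimension $1$ or $2$, and observe that $Mult(L/N)$, being a quotient of the semi-simple group $Mult(L)/\mathcal R$, must be semi-simple, while the known description of multiplication groups of loops of dimension at most $2$ forces it to be solvable. The paper is slightly more economical in the opening move: rather than splitting on $\dim Inn(L)$ and running a covering argument, it simply notes that a proper loop has $\dim Mult(L)\ge 4$ and reads off directly from Table~1 of \cite{gorbatsevich} that every minimal non-solvable action of dimension $\ge 4$ on a $3$-manifold has positive-dimensional radical (and that the manifold is $\mathbb R^3$), so the semi-simple case never arises separately.

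Two small slips to fix. First, in your Case~2 you write that $\mathcal R=1$ would make $Mult(L)$ \emph{solvable}; it makes $Mult(L)$ \emph{semi-simple}, and the contradiction you want is with your earlier claim that the semi-simple minimal actions all fall under Case~1 (discrete stabilizer). Second, your appeal to \cite{figula} in the $2$-dimensional case covers only proper loops; you should also note (as the paper does) that if $L/N$ is a $2$-dimensional connected Lie group its multiplication group is abelian, so in all cases $Mult(L/N)$ is solvable, giving the contradiction.
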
 
\begin{proof} We may assume that $L$ is simply connected and hence it is homeomorphic to $\mathbb R^3$ or to $S^3$. As 
$dim (Mult(L)) \ge 4$ it follows from \cite{gorbatsevich}, p. 201, that $L$ is homeomorphic to $\mathbb R^3$ and the radical $\mathcal{R}$ of the group $Mult(L)$ has positive dimension. As $Mult(L)$ acts effectively and minimal transitively on the manifold $L$ the orbit $\mathcal{R}(e)$ has dimension $1$ or $2$. Since $\mathcal{R}(e)$ is a normal subloop of $L$ 
(cf. Lemma \ref{brucklemma}) the factor loop $\mathcal{F}=L/ \mathcal{R}(e)$ is a connected loop of dimension $1$ or $2$. The multiplication group 
$Mult( \mathcal{F})$ of $\mathcal{F}$ is a factor group $Mult(L)/M$, where $M \neq Mult(L)$ is a connected normal subgroup of  $Mult(L)$ containing $\mathcal{R}$ (cf. Lemma \ref{brucklemma}). Hence $Mult(\mathcal{F})$ is a semi-simple Lie group. 
Since every at most $2$-dimensional connected loop having a Lie group as its multiplication group is either a connected Lie group or an elementary filiform loop (cf. Lemma 18.18 in \cite{loops}, p. 248, and Theorem 1 in \cite{figula}) we obtain a contradiction to the fact that the multiplication groups of these loops are solvable.  \end{proof}

\medskip
\noindent 
A transitive action of a Lie group $G$ on a manifold $M$ is called primitive, if on $M$ there is no $G$-invariant foliation with connected fibres of positive dimension smaller than $\hbox{dim} \ M$.

\begin{Prop} \label{nonsolvable} Let $L$ be a $3$-dimensional quasi-simple connected simply connected topological loop such that the multiplication group $Mult(L)$ of $L$ is a Lie group. 
\newline
\noindent
(a) If $L$ is homeomorphic to $S^3$, then the group $Mult(L)$ is either quasi-simple or isomorphic to the semidirect product 
$Spin_3(\mathbb R) \rtimes SO_3(\mathbb R)$.  
\newline
\noindent
(b) If $L$ is homeomorphic to $\mathbb R^3$ and the group $Mult(L)$ is not quasi-simple, then one of the following holds: 
\newline
\noindent 
(i) If $Mult(L)$ is semi-simple, then it is isomorphic to the semidirect product 
$\widetilde{PSL_2(\mathbb R)} \rtimes PSL_2(\mathbb R)$.  
\newline
\noindent
(ii) If $Mult(L)$ is not semi-simple, then it is the semidirect product $\mathbb R^3 \rtimes S$, where $S$ is isomorphic either to $Spin_3(\mathbb R)$ or to $SL_3(\mathbb R)$ respectively  to $PSL_2(\mathbb R)$   and acts irreducibly on $\mathbb R^3$. 
\end{Prop}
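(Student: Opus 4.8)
The strategy is to use the quasi-simplicity of $L$ together with Lemma~\ref{brucklemma}(ii) to pin down the connected normal subgroups of $G:=Mult(L)$, and then to match $G$ against the known transitive actions of semi-simple Lie groups on $S^{3}$ and on $\mathbb{R}^{3}$. First I would note that $G$ is connected, being topologically generated by the connected set $\Lambda(L)\cup R(L)$, and that it acts transitively and effectively on the $3$-manifold $L$ with point stabilizer $H:=Inn(L)$ whose core in $G$ is trivial. If $\mathcal{N}\neq 1$ is a connected normal subgroup of $G$, then by Lemma~\ref{brucklemma}(ii) the orbit $\mathcal{N}(e)$ is a connected normal subloop of $L$; quasi-simplicity forces $\mathcal{N}(e)$ to be $\{e\}$ or all of $L$, and in the first case $\mathcal{N}\subseteq H$, hence $\mathcal{N}=1$ by triviality of the core. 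Thus \emph{every non-trivial connected normal subgroup of $G$ is transitive on $L$}, and moreover acts effectively (its kernel on $G/H$ lies in the core of $H$). Consequently $G$ has no non-trivial connected normal subgroup of dimension $<3$, no compact such subgroup when $L\cong\mathbb{R}^{3}$, its radical $\mathcal{R}$ is trivial or transitive, and every $G$-invariant vector subgroup of an abelian normal subgroup is either $\{0\}$ or simply transitive of dimension $3$.

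For part (a), $L\cong S^{3}$: a homogeneous space of a connected solvable Lie group is aspherical, so the compact simply connected $S^{3}$ forces $\mathcal{R}=1$ and $G$ semi-simple. If the Lie algebra of $G$ is simple we are in the first alternative of (a). Otherwise $G$ has a proper simple factor $G_{1}$, which by the above is transitive on $S^{3}$; running through the classification of transitive effective actions of connected simple Lie groups on $S^{3}$ — non-compact $3$-dimensional simple groups are excluded by a covering/$\pi_{1}$ argument, and the higher-dimensional ones by the known list, whose stabilizers are self-normalizing — the only possibility is $G_{1}\cong Spin_{3}$ acting simply transitively. Identifying $L$ with the group manifold of $Spin_{3}$, any further simple factor of $G$ centralizes $G_{1}$, hence lies in the right-translation copy of $Spin_{3}$; computing the centralizer of the resulting $SO_{4}$-action inside $\mathrm{Diff}(S^{3})$ shows that there is no third factor, so $G$ is the quotient of $Spin_{3}\times Spin_{3}$ by the diagonal central subgroup of order two, that is, $G\cong Spin_{3}\rtimes SO_{3}$.

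For part (b)(i), $L\cong\mathbb{R}^{3}$ and $G$ semi-simple but not quasi-simple: each simple factor is transitive on $\mathbb{R}^{3}$, hence non-compact; the only higher-dimensional simple groups acting transitively on $\mathbb{R}^{3}$ are $SL_{2}(\mathbb{C})$ and $PSL_{2}(\mathbb{C})$ on hyperbolic $3$-space, whose point stabilizer is a self-normalizing maximal compact subgroup and therefore cannot be centralized by a second normal factor. Hence every simple factor is $3$-dimensional, and the only $3$-dimensional non-compact simple group diffeomorphic to (hence able to act simply transitively on) $\mathbb{R}^{3}$ is $\widetilde{PSL_{2}(\mathbb R)}$. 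So $G$ is generated by a left- and a right-translation copy of $\widetilde{PSL_{2}(\mathbb R)}$, there is no room for a third factor, and $G$ is the quotient of $\widetilde{PSL_{2}(\mathbb R)}\times\widetilde{PSL_{2}(\mathbb R)}$ by its diagonal centre, i.e.\ $G\cong\widetilde{PSL_{2}(\mathbb R)}\rtimes PSL_{2}(\mathbb R)$, the second factor being the stabilizer $Inn(L)$ acting by conjugation.

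For part (b)(ii), $L\cong\mathbb{R}^{3}$ and $G$ not semi-simple: then $\mathcal{R}\neq1$, so the nilradical of $G$, and in turn its centre $A:=Z(\mathrm{Nil}(G))$, is a non-trivial connected abelian normal subgroup, which is therefore simply transitive, so $A\cong\mathbb{R}^{3}$; I identify $L$ with $A$, acting by translations. An element of $G$ centralizing $A$ commutes with all translations of $\mathbb{R}^{3}$, hence is a translation and, by effectiveness, lies in $A$; so $G$ acts on $A\cong\mathbb{R}^{3}$ through an almost faithful representation, which by the first paragraph is irreducible, and since $A$ is simply transitive, $H=Inn(L)$ is a complement with $G=\mathbb{R}^{3}\rtimes H$ acting irreducibly. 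It remains to show that $H$ is semi-simple: a connected normal solvable subgroup of the irreducible linear group $H$ acts by scalars (Clifford theory and the Lie--Kolchin theorem force the splitting $3=1+1+1$), so one has to eliminate a residual one-parameter scalar subgroup; this is exactly the step where the finer loop structure is needed, namely that $\Lambda(L)$ and $R(L)$ are $H$-connected transversals generating $G$ (Lemma~\ref{kepkalemma}), since the normal subgroup $\mathbb{R}^{3}\rtimes\mathbb{R}_{>0}$ is by itself transitive on $\mathbb{R}^{3}$ and is not ruled out by the coarse argument. Once $H$ is semi-simple, the classification of connected semi-simple subgroups of $GL_{3}(\mathbb{R})$ acting irreducibly on $\mathbb{R}^{3}$ leaves only $Spin_{3}$, $SL_{3}(\mathbb{R})$ and $PSL_{2}(\mathbb{R})$, as claimed. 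I expect this last elimination of the residual scalar torus — equivalently, the proof that the radical of $Mult(L)$ is precisely the $3$-dimensional simply transitive vector group — to be the main obstacle, with a secondary, more routine difficulty being the careful citation of the classification of transitive semi-simple actions on $S^{3}$ and on $\mathbb{R}^{3}$ used to discard all higher-dimensional simple factors.
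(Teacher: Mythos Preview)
Your opening reduction---every non-trivial connected normal subgroup of $G=Mult(L)$ is transitive on $L$, hence $G$ has no such subgroup of dimension $<3$ and cannot be solvable---is exactly the paper's first step.

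For the semi-simple case, however, the paper does \emph{not} invoke a classification of transitive simple actions on $S^{3}$ or $\mathbb{R}^{3}$. It argues directly: write $G=G_{1}\cdots G_{r}$; if some proper normal product $S$ of factors has non-trivial connected stabilizer $S_{e}$, pick a factor $G_{j}$ with $S\cap G_{j}$ discrete central. Then for $g\in G_{j}$ and $\alpha\in S_{e}$ one has $g(e)=g(\alpha(e))=\alpha(g(e))$, and transitivity of $G_{j}$ forces $S_{e}$ to fix all of $L$, contradicting effectiveness. Hence $r=2$ and both $G_{1}$, $G_{2}$ act \emph{sharply} transitively, so are $3$-dimensional simply connected, which immediately gives $Spin_{3}$ or $\widetilde{PSL_{2}(\mathbb{R})}$; the stabilizer $G_{e}$ is then a $3$-dimensional group of automorphisms of $G_{1}$. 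This is shorter and more self-contained than your classification route, though yours is not wrong.

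Your acknowledged gap in (b)(ii) is real \emph{for the route you chose}, but the paper sidesteps it entirely. Rather than passing to $A=Z(\mathrm{Nil}(G))$ and then having to prove that the stabilizer $Inn(L)$ is semi-simple (eliminating a scalar torus), the paper works with the full radical $\mathcal{R}$ itself. From primitivity ($Inn(L)$ maximal) and the transitivity principle it asserts $\dim\mathcal{R}=3$; then $\mathcal{R}'$ is a connected normal subgroup of $G$ of dimension $<3$, hence trivial, so $\mathcal{R}\cong\mathbb{R}^{3}$ acts sharply transitively. The complement is then the \emph{Levi} factor $S$, semi-simple by definition, and automatically equals $Inn(L)$. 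No scalar torus needs to be eliminated because it was never split off from $\mathcal{R}$ in the first place. The moral: do not decompose prematurely---show the whole radical is abelian of dimension $3$, and semi-simplicity of the complement is free.
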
 
\begin{proof} Let $L$ be a $3$-dimensional quasi-simple connected topological loop such that the group $Mult(L)$ is a Lie group. 
Then the Lie group $Mult(L)$ acts primitively on $L$ and by Lemma \ref{brucklemma} (ii) for every non-trivial connected normal subgroup $\mathcal N$ of $Mult(L)$ the orbit $\mathcal N(e)$ is a normal subloop of $L$. For 
$\mathcal N(e)= \{e \}$ the inner mapping group $Inn(L)$ contains the normal subgroup $\mathcal N$ of $Mult(L)$ which is a contradiction. Therefore $\mathcal N(e)$ is the whole loop $L$. Hence every non-trivial connected normal subgroup $\mathcal N$ of $Mult(L)$ operates transitively on $L$. Every solvable Lie group has a one- or two-dimensional connected normal subgroup $K$. Since $K$ cannot act transitively on $L$ the Lie group $Mult(L)$ cannot be solvable. 

We may assume that $L$ is simply connected, since otherwise we would consider the universal covering of $L$. Then the loop $L$ is homeomorphic to $\mathbb R^3$ or to $S^3$.  
If the group $Mult(L)$ is semi-simple, then it has the form $Mult(L)=G_1 \cdot G_2 \cdots G_r$, where $G_i$ are normal 
quasi-simple connected Lie subgroups such that $G_i \cap G_j$ is a discrete central subgroup of $Mult(L)$. Hence every $G_i$ acts transitively on $L$. If there is a proper normal subgroup $S$ of $Mult(L)$ such that the stabilizer of $e \in L$ in the group $S$ is a non-trivial connected subgroup $S_e$ of $S$ and there is a subgroup $G_j$ such that $S \cap G_j$ is a discrete 
central subgroup of $Mult(L)$, then for all $g \in G_j$ and $\alpha \in S_e$ we have $g(e)=g(\alpha (e))= \alpha (g(e))$.  
Since $G_j$ acts transitively on $L$ the group $S_e$ fixes every element of the loop $L$. Hence the action of the group $Mult(L)$ on $L$ is not effective. From this contradiction it follows that $r=2$ and the subgroups $G_1$ and $G_2$ act 
sharply transitively on $L$. Hence $G_1$ and $G_2$ have dimension $3$. As $L$ is simply connected $G_1$ as well as $G_2$ are homeomorphic to $\mathbb R^3$ or to $S^3$. It follows that $G_1$ is isomorphic to $\widetilde{PSL_2(\mathbb R)}$ or to $Spin_3(\mathbb R)$ and the group $Mult(L)$ is the semidirect product $G_1 \rtimes G_e$, where the stabilizer $G_e$ of 
$e \in L$ in $Mult(L)$ is a $3$-dimensional automorphism group acting faithfully on $G_1$.

Now we assume that the radical $\mathcal R$ of the non-solvable group $Mult(L)$ is a non-trivial connected normal subgroup of $Mult(L)$.  As $Mult(L)$ acts primitively and effectively on $L$ the group $Inn(L)$ is a maximal subgroup of $Mult(L)$, the orbit $\mathcal R(e)$ is the whole loop $L$ and  $\hbox{dim} \mathcal R =3$. Since the commutator subgroup $\mathcal R '$ of $\mathcal R$ is normal in the group $Mult(L)$ it must be trivial otherwise we have a contradiction to the fact that $Inn(L)$ is maximal in $Mult(L)$.  Therefore 
$\mathcal R$ is a $3$-dimensional commutative connected normal subgroup of $Mult(L)$ acting sharply transitively on $L$. Then the simply connected loop $L$ is homeomorphic to $\mathbb R^3$, we have $\mathcal R \cong \mathbb R^3$ and 
$Mult(L)= \mathbb R^3 \rtimes S$, where $S$ is a semi-simple group of automorphisms of $\mathbb R^3$. 
Therefore $S$ is isomorphic either to $Spin_3(\mathbb R)$ or to $SL_3(\mathbb R)$ respectively to $PSL_2(\mathbb R)$.  
\end{proof}

\begin{Lemma} \label{simplyconnected} Let $L$ be a $3$-dimensional proper connected topological loop having a solvable Lie group as the multiplication group $Mult(L)$ of $L$. If $L$ is simply connected, then it is homeomorphic to $\mathbb R^3$. 
\end{Lemma}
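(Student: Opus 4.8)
The plan is to reduce the assertion to excluding the case that $L$ is homeomorphic to $S^3$, and to treat that case by showing that a connected solvable Lie group cannot act transitively on $S^3$. First I would recall, as already quoted in this section from \cite{gorbatsevich}, p.~210, that a simply connected $3$-dimensional topological loop whose multiplication group is a Lie group is homeomorphic either to $\mathbb R^3$ or to $S^3$; hence it suffices to derive a contradiction from the assumption $L\cong S^3$.

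So suppose $L$ is homeomorphic to $S^3$ and put $G=Mult(L)$. Then $G$ is connected, being topologically generated by the translations of the connected loop $L$, and it is solvable by hypothesis. The left translations alone already act transitively on $L$ since $\lambda_a(e)=a$, and the action of $G$ on $L$ is effective because the core of $Inn(L)$ in $G$ is trivial; by the general description of loops with Lie translation groups recalled in Section~2 this realizes $S^3$ as the compact homogeneous space $G/Inn(L)$ of the connected solvable Lie group $G$.

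The decisive step is then to invoke Montgomery's theorem on transitive actions of connected Lie groups on compact spaces: a maximal compact subgroup $K$ of $G$ still acts transitively on $S^3$, so $S^3\cong K/D$ with $D$ the stabilizer of a point. Since $G$ is solvable, $K$ is a connected compact solvable Lie group, whose Lie algebra, being simultaneously of compact type and solvable, is abelian; thus $K$ is a torus. Being abelian, $K$ has $D$ as a closed normal subgroup, so $S^3\cong K/D$ inherits the structure of a compact connected abelian Lie group, that is, of a torus of dimension $3$, which contradicts $\pi_1(S^3)=1$. Hence $L$ is homeomorphic to $\mathbb R^3$.

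I expect the main obstacle to be the correct and legitimate appeal to Montgomery's theorem (a maximal compact subgroup of a connected Lie group acting transitively on a compact manifold acts transitively there as well); once that is available, the remaining ingredients — connectedness and effectiveness of the $Mult(L)$-action, and the elementary fact that a connected compact solvable Lie group is a torus — are routine. A minor point to verify is that the abstract permutation action of $Mult(L)$ on $L$ is a genuine smooth transitive Lie transformation group action, so that indeed $L=G/Inn(L)$ as manifolds; this is part of the standard setup recalled in Section~2. Note that the hypothesis that $L$ be proper is not used in this argument.
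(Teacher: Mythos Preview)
Your argument is correct. Both you and the paper start from the $\mathbb R^3$-versus-$S^3$ dichotomy quoted from \cite{gorbatsevich}, and both conclude by ruling out $S^3$; the difference is only in how that exclusion is carried out. The paper simply cites Theorem~3.2 of \cite{gorbatsevich}, p.~208, stating that $S^3$ is not a solvmanifold, and is done in one line. You instead reprove this fact on the spot: Montgomery's theorem forces a maximal compact subgroup $K$ of the connected solvable group $G=Mult(L)$ to act transitively on $S^3$, and since a connected compact solvable Lie group is a torus, $S^3$ would be a torus quotient, contradicting $\pi_1(S^3)=1$. Your route is more self-contained and makes the mechanism visible, at the price of importing Montgomery's theorem; the paper's route is shorter but relies on a black-box citation. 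Your observations that connectedness of $G$, effectiveness of the action, and the identification $L\cong G/Inn(L)$ are all part of the standard setup are accurate, and you are right that the hypothesis ``proper'' plays no role here.
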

\begin{proof} By Theorem 3.2 in \cite{gorbatsevich}, p. 208, the sphere $S^3$ is not a solvmanifold.  Hence $L$ cannot be homeomorphic to $S^3$ and the assertion follows.  \end{proof}

\medskip
\noindent  
Now we deal with the case that the multiplication group $Mult(L)$ of the loop $L$ is nilpotent.

\begin{Lemma} Let $L$ be a $3$-dimensional proper connected simply connected topological loop such that its multiplication group 
$Mult(L)$ is a nilpotent Lie group. Then the loop $L$ is centrally nilpotent. The loop $L$ is an extension of a $2$-dimensional centrally nilpotent loop $M$ by the abelian Lie group $\mathbb R$ and also an extension of the abelian Lie group $\mathbb R$ by a $2$-dimensional centrally nilpotent loop $M$. 
\end{Lemma}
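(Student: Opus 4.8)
The first assertion requires nothing new: since $Mult(L)$ is nilpotent, the theorem of Bruck quoted before Lemma~\ref{brucklemma} gives that $L$ is centrally nilpotent, and in particular $Z(L)\neq\{e\}$.

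My plan for the two extension statements is to produce, on one side, a $1$-dimensional normal subloop isomorphic to $\mathbb R$ with $2$-dimensional factor loop, and on the other side a $2$-dimensional normal subloop with $1$-dimensional factor loop isomorphic to $\mathbb R$. Since $L$ is proper, $Z(L)\neq L$, so $1\le\dim Z(L)\le 2$; and since nilpotent groups are solvable, Lemma~\ref{simplyconnected} shows that $L$ is homeomorphic to $\mathbb R^3$. For $z\in Z(L)$ one checks from the centre equations that $\lambda_z=\rho_z$ lies in the centre of $Mult(L)$ and that $z\mapsto\lambda_z$ embeds $Z(L)$ into $Z(Mult(L))$; hence any closed subgroup $K\le\{\lambda_z;\ z\in Z(L)\}$ is central in $Mult(L)$, and if $K$ were contained in $Inn(L)$ it would lie in the trivial core of $Inn(L)$, so every such $K\neq\{e\}$ acts freely on $L\cong\mathbb R^3$. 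As a circle cannot act freely on $\mathbb R^3$, the connected abelian Lie group $Z(L)^{\circ}$ has no toral part and therefore contains a closed subgroup $N\cong\mathbb R$; being central, $N$ is a normal subloop of $L$. By Lemma~\ref{brucklemma}(i), $M:=L/N$ is a $2$-dimensional connected topological loop with $Mult(M)\cong Mult(L)/M(N)$ nilpotent, so $M$ is centrally nilpotent by Bruck's theorem. This realises $L$ as an extension of the $2$-dimensional centrally nilpotent loop $M$ by $\mathbb R$.

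For the second extension I would descend into $M$. Because $N$ is connected, the homotopy exact sequence of the fibration $N\to L\to M$ gives that $M$ is simply connected, hence homeomorphic to $\mathbb R^2$ since $S^2$ carries no loop multiplication. If $M$ is an abelian group I take any closed connected $1$-dimensional subgroup $R\le M$; if $M$ is proper I take $R:=Z(M)^{\circ}$, which is $1$-dimensional because $M\neq Z(M)$. In both cases $R$ is a connected normal subloop of $M$ with $M/R$ of dimension $1$, so its preimage $\widetilde N$ under $L\to M$ is a connected normal subloop of $L$ with $\dim\widetilde N=\dim N+\dim R=2$. Then $L/\widetilde N\cong M/R$ is a connected, simply connected (as $\widetilde N$ is connected), $1$-dimensional topological loop, hence isomorphic to $\mathbb R$; and $\widetilde N$ has $Mult(\widetilde N)\le M(\widetilde N)\trianglelefteq Mult(L)$, nilpotent as a subgroup of a nilpotent group, so $\widetilde N$ is centrally nilpotent by Bruck's theorem. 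This realises $L$ as an extension of $\mathbb R$ by the $2$-dimensional centrally nilpotent loop $\widetilde N$.

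The routine points are checking that the subloops and factor loops occurring here are genuine topological loops on manifolds of the stated dimension, and that identity components of closed normal subloops are again normal. The one genuinely delicate step, which I expect to be the main obstacle, is the exclusion of compact (circle or torus) pieces from the centre and from the $1$-dimensional factors: everything rests on converting a central circle subloop into a free circle action on $\mathbb R^3$ and invoking the non-existence of such actions, which is the place where simple connectedness of $L$ is really used.
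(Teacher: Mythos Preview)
Your argument is correct and gives both extensions, but it follows a different path than the paper. The paper does not build the normal subloops from the loop centre at all. Instead, after observing that $Mult(L)$ acts imprimitively on $L\cong\mathbb R^3$, it invokes Lie's trichotomy for imprimitive Lie group actions on $\mathbb R^3$ (classes I, II, III in \cite{lie}) and uses the fact, taken from \cite{gonzalez}, that no nilpotent Lie group acts primitively on a $2$-manifold to exclude classes I and II. Class III then hands over, in one stroke, a nested pair $N\subset M\subset L$ with $N$ a $1$-dimensional and $M$ a $2$-dimensional connected normal subloop; the rest is the same bookkeeping you do. Your route is more internal to loop theory: you locate a $1$-dimensional piece of $Z(L)$ directly, excluding torus factors via the non-existence of free circle actions on $\mathbb R^3$, and then descend to the $2$-dimensional quotient and pull back. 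The paper's approach avoids the circle-action argument and yields the nested pair $N\subset M$ simultaneously, at the price of citing the Lie classification and a result on primitive planar actions; your approach avoids those external classifications but leans on a piece of transformation-group topology and on the identification $Z(L)\cong Z(Mult(L))$ (which the paper only records later, in Lemma~\ref{centrelemma}).
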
 
\begin{proof} 
By Lemma \ref{simplyconnected} the loop $L$ is homeomorphic to $\mathbb R^3$ and by Proposition \ref{nonsolvable} it has a proper connected normal subloop. Hence the multiplication group $Mult(L)$ acts imprimitively on $L$. 
One can distinguish three classes of Lie groups acting imprimitively on $\mathbb R^3$ (cf. \cite{lie}, pp. 141-178): 
\newline
\noindent
I. Lie groups $G$, such that on $\mathbb R^3$ there is a $G$-invariant foliation with $2$-dimensional connected fibres $P$, but there is no $G$-invariant foliation of $P$ with $1$-dimensional connected fibres. 
\newline
\noindent
II. Lie groups $G$, such that on $\mathbb R^3$ there is a $G$-invariant foliation with $1$-dimensional connected fibres $C$, but on $\mathbb R^3$ there is no foliation with $2$-dimensional fibres $P$ which consists of a $1$-dimensional foliation of elements of $C$. 
\newline
\noindent
III. Lie groups $G$, such that on $\mathbb R^3$ there is a $G$-invariant foliation with $1$-dimensional connected fibres $C$ and there is on 
$\mathbb R^3$ also a foliation with $2$-dimensional fibres $P$ which consists of a $1$-dimensional foliation of elements of $C$. 
\newline
\noindent
For the groups $G$ belonging to the classes I and II there is a $2$-dimensional manifold $P$ such that $G$ acts on $P$ primitively. According to \cite{gonzalez}, Table 1, p. 341, there is no nilpotent Lie group with this property. Since every subgroup and factor group of a nilpotent Lie group is nilpotent, a nilpotent Lie group $Mult(L)$ is only in the class III. Hence the loop $L$ has a $2$-dimensional connected normal subloop $M$ containing a $1$-dimensional connected normal subloop $N$ of $L$. Since the group $Mult(L)$ is nilpotent the loop $L$ is centrally nilpotent (cf. \cite{bruck}). 
Hence every subloop and every factor loop of $L$ is centrally nilpotent. 
As $L$ is homeomorphic to $\mathbb R^3$ and the multiplication group of the connected centrally nilpotent loop $N$ is a Lie group it is isomorphic to the group $\mathbb R$. The factor loop $L/N$ is a $2$-dimensional connected centrally nilpotent loop.  As $L$ is a fibering of $\mathbb R^3$ over $L/N$ with fibers homeomorphic to $\mathbb R$ it follows that $L/N$ is homeomorphic to $\mathbb R^2$. Every $2$-dimensional connected centrally nilpotent loop which is homeomorphic to $\mathbb R^2$ and having a Lie group as its multiplication group is isomorphic either to the Lie group $\mathbb R^2$ or to an elementary filiform loop (cf. \cite{figula}, Theorem 1). \end{proof}

\begin{Lemma} \label{centrelemma} Let $L$ be a $3$-dimensional proper connected simply connected topological  loop such that its multiplication group $Mult(L)$ is a nilpotent Lie group. The centre $Z$ of the group $Mult(L)$ as well as the centre of the loop $L$ is isomorphic to the group $\mathbb R^n$ with $1 \le n \le 2$. 
\end{Lemma}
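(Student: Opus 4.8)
The plan is to identify the centre of the loop $L$ with the orbit of the identity element under the centre of $Mult(L)$, and then to exploit that this common object is a subgroup of a nilpotent transformation group of $\mathbb R^{3}$ that acts \emph{freely}. First I would collect the structural facts. By Lemma~\ref{simplyconnected} the loop $L$ is homeomorphic to $\mathbb R^{3}$, and $Mult(L)$ is a \emph{connected} nilpotent Lie group acting transitively on $L$ (the left translations alone act transitively) and effectively (the core of $Inn(L)$ in $Mult(L)$ is trivial). Put $Z:=Z(Mult(L))$. Passing to the universal covering $\widetilde{G}$ of $Mult(L)=\widetilde{G}/\Gamma$, one has $Z(\widetilde{G})\cong \mathbb R^{k}$ and $Z=Z(\widetilde{G})/\Gamma$, so $Z$ is a connected abelian Lie group, necessarily of the form $\mathbb R^{a}\times T^{b}$; moreover $\dim Z=a+b\ge 1$ because a nilpotent Lie group of positive dimension has a positive-dimensional centre.

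Next I would show that $Z$ acts freely on $L$: if $g\in Z$ fixes $e$, then for every $x=\lambda _{x}(e)\in L$ we get $g(x)=g\lambda _{x}(e)=\lambda _{x}g(e)=\lambda _{x}(e)=x$, hence $g=1$; in particular $Z\cap Inn(L)=1$ and the orbit $Z(e)$ is an immersed submanifold of $L$ with $\dim Z(e)=\dim Z$. Since a non-trivial compact Lie group, in particular a torus, cannot act freely on $\mathbb R^{3}$ (an effective torus action on a finite-dimensional acyclic space has a non-empty fixed-point set by Smith theory; alternatively one invokes the known results on compact transformation groups of $\mathbb R^{3}$), the torus part is trivial, $b=0$ and $Z\cong \mathbb R^{a}$.

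Then I would prove that $Z(e)=Z(L)$ and that $g\mapsto g(e)$ is a group isomorphism $Z\to Z(L)$. For $g\in Z$ set $z=g(e)$; from $g\lambda _{x}=\lambda _{x}g$ and $g\rho _{x}=\rho _{x}g$ one reads off $\lambda _{z}=\rho _{z}=g$, and a short computation with these same commuting relations verifies the four defining identities of the loop centre, so $z\in Z(L)$. Conversely, for $z\in Z(L)$ the permutation $\lambda _{z}=\rho _{z}$ commutes with every $\lambda _{x}$ and every $\rho _{x}$, hence lies in $Z$ and sends $e$ to $z$; thus $Z(e)=Z(L)$. Since $g(x)=x\cdot g(e)=g(e)\cdot x$ for $g\in Z$, the bijection $g\mapsto g(e)$ of $Z$ onto $Z(L)$ is multiplicative, so $Z(L)\cong Z\cong \mathbb R^{a}$.

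It remains to bound $a$. The set $Z(L)=Z(e)$ is a closed subloop of $L$, and if it were three-dimensional the immersed submanifold $Z(e)$ would be open (full dimension) as well as closed in the connected space $L$, hence all of $L$, making $L$ an abelian Lie group and contradicting that $L$ is a proper loop. Therefore $1\le a=\dim Z=\dim Z(L)\le 2$, and $Z\cong Z(L)\cong \mathbb R^{a}$ with $a\in\{1,2\}$, which is the assertion. I expect the principal obstacle to be the topological input that a non-trivial compact subgroup of $Mult(L)$ cannot act freely on $L\cong \mathbb R^{3}$ (ruling out the torus factor of $Z$); a secondary technical point is to make the identification $Z(L)=Z(Mult(L))(e)$ work simultaneously at the level of sets, of topological spaces, and of the group structures.
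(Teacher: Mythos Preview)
Your proof is correct and follows the same broad architecture as the paper's: identify $Z(Mult(L))$ with $Z(L)$ via the orbit $Z(e)$, use nilpotency to get $\dim Z\ge 1$, use freeness of the central action to get $\dim Z=\dim Z(e)\le 3$, and rule out $\dim Z=3$.

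There are, however, two genuine differences worth noting. First, the paper simply cites Pflugfelder for the isomorphism $Z(Mult(L))\cong Z(L)$, whereas you reprove it from the commuting relations $g\lambda_{x}=\lambda_{x}g$, $g\rho_{x}=\rho_{x}g$; your argument is elementary and self-contained. Second, and more interestingly, you explicitly exclude a torus factor in $Z$ by invoking Smith theory (a non-trivial torus cannot act freely on the acyclic space $\mathbb R^{3}$). The paper's proof does not address this point at all: it argues only about $\dim Z$ and tacitly treats $Z$ as a vector group. Your treatment therefore fills a small gap in the paper's exposition. For the exclusion of $\dim Z=3$, the paper argues that $Z$ would then act sharply transitively, writes $Mult(L)=Z\rtimes Inn(L)$, observes that the left translation group contains $Z$, and appeals to Theorem~17.11 of \cite{loops} for a contradiction. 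Your route is shorter and more transparent: if $\dim Z(L)=3$ then the closed subloop $Z(L)$ is open, hence $Z(L)=L$ and $L$ is an abelian group, contradicting properness. Both arguments are valid; yours avoids the external citation.
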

\begin{proof} 
By Lemma \ref{simplyconnected} the loop $L$ is homeomorphic to $\mathbb R^3$. 
According to \cite{flugfelder}, p. 25, the centre $Z$ of the group $Mult(L)$ is isomorphic to the centre $Z(L)$ of the loop $L$. 
As the group $Mult(L)$ is nilpotent one has $\hbox{dim} Z \ge 1$ (cf. \cite{handbook}, Proposition 4 (c), p. 619) and $\hbox{dim} Z = \hbox{dim} Z(L)$. As $\hbox{dim} L=3$ we obtain $\hbox{dim} Z \le 3$. For the orbit $Z(e)$ one has $\hbox{dim} Z = \hbox{dim} Z(e)$, otherwise there is a proper 
central subgroup of $Mult(L)$ which leaves the element $e \in L$ fixed  which is a contradiction. 
If $\hbox{dim} Z = 3$, then for the orbit $Z(e)$ one has $Z(e)=L$. Hence $Z$ operates sharply transitively on $L$ and we have 
$Mult(L)= Z \rtimes Inn(L)$. Since we can identify the elements of $L$ with the elements of $Z$ the group $G_l$ topologically generated by the left translations of $L$ contains $Z$ as a subgroup. But Theorem 17.11 in \cite{loops}, p. 231, gives a contradiction and the assertion follows.   
\end{proof}

\begin{Lemma} \label{innermappinglemma} Let $L$ be a $3$-dimensional proper connected simply connected topological loop  such that its multiplication group $Mult(L)$ is a nilpotent Lie group. Let $Z(L)$ be the centre of the loop $L$. 
\newline
\noindent
(a) Every $1$-dimensional normal subloop $N$ of $L$ is a central subgroup of $L$.  
\newline
\noindent
(b) If $\hbox{dim} \ Z(L)=1$ and the factor loop $L/Z(L)$ is isomorphic to the group $\mathbb R^2$ as well as if $\hbox{dim} \ Z(L)=2$, then the inner mapping group $Inn(L)$ of the loop $L$ is abelian. 
\end{Lemma}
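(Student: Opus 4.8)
For part (a) the plan is to play the normal subgroup $M(N)=\{\,m\in Mult(L)\,:\,m(x)N=xN\ \text{for all}\ x\in L\,\}$ from Lemma \ref{brucklemma}(i) against the nilpotency of $Mult(L)$. First I would record that $M(N)$ contains $Mult(N)$ and that every $m\in M(N)$ satisfies $m(e)\in N$, so the orbit $M(N)(e)$ is exactly $N$; hence $M(N)$ is a nontrivial closed normal subgroup of the connected nilpotent Lie group $Mult(L)$. Passing to the identity component and using that a nonzero ideal of a nilpotent Lie algebra meets the centre, one gets inside $M(N)$ a nontrivial connected subgroup $C\cong\mathbb R$ which lies in $Z(Mult(L))$ and is normal in $Mult(L)$. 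Since the core of $Inn(L)$ in $Mult(L)$ is trivial, $C\not\subseteq Inn(L)$, so the orbit $C(e)$ is nontrivial. Now $C(e)$ is a connected subloop of $N$ (because $C\le M(N)$, and $N\cong\mathbb R$ since $N$ is a one-dimensional connected normal subloop of the simply connected $L\cong\mathbb R^3$), and it is also contained in $Z(L)$ (because $C\le Z(Mult(L))$, whose orbit through $e$ is $Z(L)$ by \cite{flugfelder}). As the only nontrivial connected subgroup of $N\cong\mathbb R$ is $N$ itself, $N=C(e)\subseteq Z(L)$, i.e.\ $N$ is a central subgroup of $L$. The only delicate point is extracting a connected central subgroup from $M(N)$, which is the standard Lie-theoretic consequence of nilpotency.

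For part (b) I would first observe that the two hypotheses have a common consequence: the factor loop $A:=L/Z(L)$ is an abelian Lie group. This is the hypothesis itself when $\dim Z(L)=1$, and when $\dim Z(L)=2$ the loop $A$ is one-dimensional and connected, hence $A\cong\mathbb R$. Since $Inn(A)=\{1\}$ for an abelian group $A$, the homomorphism $Mult(L)\to Mult(L)/M(Z(L))\cong Mult(A)$ induced by $L\to A$ (Lemma \ref{brucklemma}(i)) carries $Inn(L)$ into $Inn(A)=\{1\}$; therefore $Inn(L)\subseteq M(Z(L))$, so every $\iota\in Inn(L)$ preserves each coset of $Z(L)$ and one may write $\iota(x)=x\cdot t_\iota(x)$ with $t_\iota(x)\in Z(L)$. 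Moreover every $z\in Z(L)$ gives a translation $\rho_z=\lambda_z$ lying in $Z(Mult(L))$, so $\rho_z$ commutes with $\iota$ and hence $\iota(x\cdot z)=\iota(x)\cdot z$ for all $x\in L$.

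With these two facts, together with the associativity $(x\cdot a)\cdot b=x\cdot(a\cdot b)$ valid whenever $a\in Z(L)$, the proof finishes by a short computation: for $\iota_1,\iota_2\in Inn(L)$,
\[
(\iota_1\iota_2)(x)=\iota_1\bigl(x\cdot t_{\iota_2}(x)\bigr)=\iota_1(x)\cdot t_{\iota_2}(x)=\bigl(x\cdot t_{\iota_1}(x)\bigr)\cdot t_{\iota_2}(x)=x\cdot\bigl(t_{\iota_1}(x)\,t_{\iota_2}(x)\bigr),
\]
so $t_{\iota_1\iota_2}(x)=t_{\iota_1}(x)\,t_{\iota_2}(x)$, and symmetrically $t_{\iota_2\iota_1}(x)=t_{\iota_2}(x)\,t_{\iota_1}(x)$; since $Z(L)$ is an abelian group these agree for every $x$, whence $\iota_1\iota_2=\iota_2\iota_1$ and $Inn(L)$ is abelian. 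The real obstacle here is the pair of structural facts — that inner mappings fix every coset of $Z(L)$ and commute with right translations by central elements — and it is precisely in establishing the first that one needs $L/Z(L)$ to be an abelian group, which is where both hypotheses of the statement enter.
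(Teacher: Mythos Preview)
Your argument is correct, and for both parts it follows a genuinely different route from the paper's.

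For (a), the paper never touches $M(N)$ or the Lie-algebraic structure of $Mult(L)$; instead it observes that $L$ is centrally nilpotent (since $Mult(L)$ is nilpotent), notes that a $1$-dimensional connected normal subloop is a minimal normal subloop, and then invokes Bruck's theorem (Theorem~4C and its Corollary in \cite{bruck}) that in a centrally nilpotent loop every minimal normal subloop is central. Your approach trades this loop-theoretic black box for a transparent Lie-theoretic one: the nonzero ideal $\mathrm{Lie}(M(N)^0)$ meets the centre of the nilpotent algebra $\mathrm{Lie}(Mult(L))$, yielding a central one-parameter group $C\le M(N)$ whose orbit through $e$ is forced to be all of $N$. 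A small remark: you do not actually need $C\cong\mathbb R$ or $N\cong\mathbb R$ for the final step, only that a nontrivial connected subgroup of a $1$-dimensional connected Lie group is the whole group, which holds for $\mathbb R$ and $S^1$ alike.

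For (b), the paper again quotes Bruck: since $L$ is centrally nilpotent of class~$2$ in both cases, Theorem~8A of \cite{bruck} supplies an ascending series $1=\mathcal R_0<\mathcal R_1=\mathcal R_2=Inn(L)$ with $\mathcal R_1/\mathcal R_0$ abelian, and one reads off the conclusion. Your argument is an explicit, self-contained unpacking of exactly this phenomenon: once $L/Z(L)$ is a group, each inner mapping acts on every $Z(L)$-coset as translation by a central element, and since central translations commute with everything, two inner mappings commute. This is more elementary and makes clear that the key hypothesis is ``$L/Z(L)$ is a group'' (equivalently, central nilpotency of class~$\le 2$); the abelianness of $L/Z(L)$ is incidental. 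The paper's route is shorter on the page but relies on a substantial external reference; yours is longer but entirely internal to the tools already set up in the preliminaries.
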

\begin{proof} By Lemma \ref{simplyconnected} the loop $L$ is homeomorphic to $\mathbb R^3$. As $Mult(L)$ is nilpotent the proper loop $L$ is centrally nilpotent of class $2$ or $3$.  
Let $N$ be a $1$-dimensional connected normal subloop of $L$. Since the group $Mult(N)$ of $N$ is a Lie subgroup of $Mult(L)$, the loop $N$ is isomorphic to a $1$-dimensional Lie group (cf. \cite{loops}, Proposition 18.18). As $N$ is minimal  it is contained in the centre of $L$ (see \cite{bruck}, Theorem 4 C and Corollary to theorem 4 C, p. 267) and the first assertion is proved. 
\newline
\noindent
In the cases of assertion (b) the loop is centrally nilpotent of class $2$. Hence $L$ has an upper central series 
$e =Z_0 < Z_1 =Z(L) < Z_2=L$, where $Z_i/Z_{i-1}$ is the centre of $L/Z_{i-1}$ for $i=1,2$ such that in the first case $\hbox{dim} Z(L)=1$ and in the second case $\hbox{dim} Z(L) =2$. By Theorem 8 A in \cite{bruck}, pp. 280-281, there exists an ascending series 
$1 =\mathcal{R}_0 < \mathcal{R}_1=\mathcal{R}_2 =Inn(L)$ 
of subgroups of the inner mapping group $Inn(L)$ such that $\mathcal{R}_1/\mathcal{R}_0 \cong Inn(L)$ is an abelian group. This gives the assertion in (b). 
\end{proof}

\begin{Prop} \label{2dimensionalcentre} Let $L$ be a $3$-dimennsional proper connected simply connected topological  loop such that its multiplication group $Mult(L)$ is a nilpotent Lie group. We assume that $L$ has a $2$-dimensional centre  $Z(L)$. Then $Mult(L)$ is a semidirect product of the abelian group $M \cong \mathbb R^m$, $m \ge 3$, by a group $Q \cong \mathbb R$ such that $M= Z \times Inn(L)$, where $\mathbb R^2 =Z \cong Z(L)$ is 
the centre of $Mult(L)$.   
\end{Prop}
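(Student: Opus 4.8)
The plan is to locate the required codimension-one normal subgroup as $M(Z(L))$ in the sense of Lemma \ref{brucklemma}, to show it coincides with the internal direct product $Z\times Inn(L)$, and then to split off a one-parameter complement.

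First I would assemble the available structure. By Lemma \ref{simplyconnected} the loop $L$ is homeomorphic to $\mathbb R^3$; by Lemma \ref{centrelemma} the centre $Z$ of $Mult(L)$ satisfies $Z\cong Z(L)\cong\mathbb R^2$; and by Lemma \ref{innermappinglemma}(b) the stabilizer $Inn(L)$ of $e$ in $Mult(L)$ is abelian. Since $Mult(L)$ acts transitively and effectively on $L$, the orbit map $g\mapsto g(e)$ is a fibre bundle $Inn(L)\to Mult(L)\to L\cong\mathbb R^3$; its homotopy exact sequence, using $\pi_1(\mathbb R^3)=0$, shows $Inn(L)$ is connected. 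A central subgroup of $Mult(L)$ contained in $Inn(L)$ lies in the core of $Inn(L)$, which is trivial, so $Z\cap Inn(L)=1$; since $Mult(L)$ is nilpotent a maximal torus of the connected abelian group $Inn(L)$ would be central, hence contained in $Z$, so $Inn(L)\cong\mathbb R^k$ with $k=\dim Inn(L)\ge 1$ (if $k=0$ then $L$ is a group, contrary to hypothesis). Feeding $\pi_1(Inn(L))=0$ back into the exact sequence gives $\pi_1(Mult(L))=0$. Hence $Mult(L)$ is a connected, simply connected, nilpotent Lie group of dimension $3+k$ whose exponential map is a diffeomorphism, and $M:=Z\cdot Inn(L)\cong Z\times Inn(L)$ is a closed connected abelian subgroup isomorphic to $\mathbb R^m$ with $m=2+k\ge 3$, containing the centre $Z\cong\mathbb R^2$ as a direct factor.

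Next I would prove $M$ normal with $Mult(L)/M\cong\mathbb R$. The factor loop $L/Z(L)$ is a connected one-dimensional loop with nilpotent Lie multiplication group, hence a one-dimensional Lie group, and it is simply connected since $L\cong\mathbb R^3$ is fibred over it with fibre $Z(L)\cong\mathbb R^2$; thus $L/Z(L)\cong\mathbb R$, and by Lemma \ref{brucklemma}(i) the group $M(Z(L))$ is normal in $Mult(L)$ with $Mult(L)/M(Z(L))\cong Mult(L/Z(L))\cong\mathbb R$, so $\dim M(Z(L))=2+k$. Now the homomorphism $Mult(L)\to Mult(L/Z(L))$ of Lemma \ref{brucklemma}(i) carries $Inn(L)$ into $Inn(L/Z(L))=1$, whence $Inn(L)\subseteq M(Z(L))$; and for $z\in Z$ and any $x\in L$ one has $z(x)=z\lambda_x(e)=\lambda_x z(e)=x\cdot z(e)$ with $z(e)\in Z(L)$, so $z(x)\in xZ(L)$ and $z\in M(Z(L))$. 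Therefore the connected subgroup $M=Z\cdot Inn(L)$ of dimension $2+k$ is contained in $M(Z(L))$ and so equals its identity component, which is normal in $Mult(L)$; moreover $Mult(L)/M$ is a one-dimensional connected Lie group, necessarily $\mathbb R$, since an $S^1$ quotient would force $\pi_1(Mult(L))\neq 0$.

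Finally I would split the extension $1\to M\to Mult(L)\to\mathbb R\to 1$. Writing $\mathfrak g$ and $\mathfrak m\subset\mathfrak g$ for the Lie algebras of $Mult(L)$ and $M$, normality of $M$ makes $\mathfrak m$ an ideal of codimension one; I would pick $X\in\mathfrak g\setminus\mathfrak m$ and set $Q:=\exp(\mathbb R X)$. Since $\exp\colon\mathfrak g\to Mult(L)$ is a diffeomorphism, $Q$ is a closed one-parameter subgroup isomorphic to $\mathbb R$ with $Q\cap M=\exp(\mathbb R X\cap\mathfrak m)=\{1\}$; and as $M$ is normal, $MQ$ is a subgroup of dimension $(2+k)+1=\dim Mult(L)$, hence open in the connected group $Mult(L)$ and equal to it. This yields $Mult(L)=M\rtimes Q$ with $M=Z\times Inn(L)\cong\mathbb R^m$, $m\ge 3$, $Q\cong\mathbb R$, and $\mathbb R^2=Z\cong Z(L)$ the centre of $Mult(L)$. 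I expect the principal obstacle to be the middle step: establishing $Z\subseteq M(Z(L))$, which relies on $z(e)\in Z(L)$ (i.e. that the centre of $Mult(L)$ corresponds to $Z(L)$ via the orbit of $e$, cf. \cite{flugfelder}), and then squeezing $M$ between the two equidimensional subgroups $M\subseteq M(Z(L))$ to get normality; the homotopy-theoretic identification of $Mult(L)$ as simply connected should be routine but must be handled with care.
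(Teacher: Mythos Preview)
Your proposal is correct and follows essentially the same architecture as the paper: identify $L/Z(L)\cong\mathbb R$, invoke Lemma~\ref{brucklemma}(i) to get the normal subgroup $M(Z(L))$ with quotient $\mathbb R$, and then show this subgroup coincides with $Z\times Inn(L)$.

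Where you differ from the paper is in the supporting rigor. The paper simply asserts that $Inn(L)$ is abelian of codimension~$3$ and that $M=Z\times Inn(L)$, invoking that $Inn(L)$ fixes $Z(L)$ elementwise; it does not discuss connectedness of $Inn(L)$, simple connectedness of $Mult(L)$, or how the complement $Q$ is obtained. You supply all of this via the homotopy exact sequence of the bundle $Inn(L)\to Mult(L)\to L$, the torus-free argument for $Inn(L)\cong\mathbb R^k$, and the explicit Lie-algebraic construction of $Q=\exp(\mathbb R X)$. You also argue $Z\subseteq M(Z(L))$ directly from centrality rather than citing that $M(Z(L))$ contains $Mult(Z(L))$. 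These additions buy you a genuinely self-contained argument, and in particular your simple-connectedness of $Mult(L)$ makes the equality $M=M(Z(L))$ (not just its identity component) automatic, since closed subgroups of simply connected nilpotent Lie groups are connected. The paper's version is shorter but leaves these points implicit.
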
 
\begin{proof}  By Lemma \ref{simplyconnected} the loop $L$ is homeomorphic to $\mathbb R^3$. The centre $Z(L)$ is a $2$-dimensional normal subgroup of $L$ isomorphic to $\mathbb R^2$ (cf. Lemma \ref{centrelemma}). The factor loop $L/Z(L)$ is a $1$-dimensional connected loop such that its multiplication group is a factor group of $Mult(L)$. Hence $L/Z(L)$ is the group $\mathbb R$ (cf. Theorem 18.18 in \cite{loops}). By Lemma \ref{brucklemma} there exists a normal subgroup $M$ of $Mult(L)$ such that $Mult(L)/M$ is isomorphic to the group $Mult(L/Z(L)) \cong \mathbb R$. The normal subgroup $M$ 
leaves every orbit of $Z(L)$ homeomorphic to $\mathbb R^2$ in the manifold $L$ invariant and contains the multiplication group of $Z(L)$. The multiplication group of $Z(L)$ consists of the translations by elements of $Z(L)$. Hence it is isomorphic to $Z(L)$ and it is also isomorphic to the centre $Z$ of the group $Mult(L)$ (cf. Lemma \ref{centrelemma}). The group $Mult(L)/M$ operates sharply transitively on the orbits of $Z(L)$ in $L$ hence the inner mapping group $Inn(L)$ is a subgroup of $M$. According to Lemma \ref{innermappinglemma} the group $Inn(L)$ is abelian of codimension $3$ in $Mult(L)$. 
Hence one has $M= Z \rtimes Inn(L)$. Therefore, $M$ induces on every orbit $Z(L)(x), x \in L$, the sharply transitive group $\mathbb R^2$. As $Inn(L)$ fixes every element of $Z(L)$ (cf. \cite{bruck}, IV.1) it is a normal subgroup of $M$ such that $Z \cap Inn(L)= \{ 1 \}$. Hence 
$M= Z \times Inn(L)$, where $Z = \mathbb R^2$ is the centre of $Mult(L)$. \end{proof}

\begin{Prop} \label{1dimensionalcentreegy} Let $L$ be a $3$-dimensional proper connected simply connected topological  loop such that its multiplication group $Mult(L)$ is a nilpotent Lie group. We assume that $L$ has a $1$-dimensional centre  $Z(L)$. 
\newline
\noindent
a) If the factor loop $L/Z(L)$ is isomorphic to the 
abelian group $\mathbb R^2$, then $Mult(L)$ is a semidirect product of the abelian group $P \cong \mathbb R^m$, $m \ge 2$, by a group 
$Q \cong \mathbb R^2$ such that $P= Z \times Inn(L)$, where $\mathbb R =Z \cong Z(L)$ is the centre of $Mult(L)$.   
\newline
\noindent
b) If the factor loop $L/Z(L)$ is isomorphic to a $2$-dimensional elementary filiform loop $L_{\mathcal F}$, then there is a normal subgroup $S$ of the group $Mult(L)$  containing the centre $Z$ of $Mult(L)$ such that the factor group $Mult(L)/S$ is an elementary filiform Lie group ${\mathcal F}_{n+2}$ with $n \ge 2$. 
\end{Prop}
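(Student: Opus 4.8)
The plan is to follow the pattern of the proof of Proposition~\ref{2dimensionalcentre}, with the roles of $Z(L)$ and $L/Z(L)$ interchanged, after a reduction common to both assertions. By Lemma~\ref{simplyconnected} the loop $L$ is homeomorphic to $\mathbb{R}^3$, and by Lemma~\ref{centrelemma} together with Lemma~\ref{innermappinglemma}(a) its centre $Z(L)$ is a central subgroup of $L$ isomorphic to $\mathbb{R}$. The multiplication group of $Z(L)$, which consists of the translations by the elements of $Z(L)$, is central in $Mult(L)$ and, by Lemma~\ref{centrelemma}, coincides with the centre $Z$ of $Mult(L)$. Since $Z(L)$ is a normal subloop, Lemma~\ref{brucklemma}(i) yields a normal subgroup $S:=M(Z(L))$ of $Mult(L)$ with $Z\le S$ and $Mult(L)/S\cong Mult(L/Z(L))$, which is a Lie group, where the factor loop $L/Z(L)$ is $2$-dimensional, connected and simply connected ($L$ being a fibering over $L/Z(L)$ with fibre $\mathbb{R}$).

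This already settles part b). By hypothesis $L/Z(L)$ is isomorphic to a $2$-dimensional elementary filiform loop, so by the definition of such a loop its multiplication group is an elementary filiform Lie group $\mathcal{F}_{n+2}$ with $n\ge 2$. Hence $S$ is a normal subgroup of $Mult(L)$ containing $Z$ with $Mult(L)/S\cong\mathcal{F}_{n+2}$, $n\ge 2$, which is the assertion.

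For part a) put $P:=S=M(Z(L))$. Since $L/Z(L)$ is the abelian group $\mathbb{R}^2$ we have $Mult(L/Z(L))\cong\mathbb{R}^2$ and $Inn(L/Z(L))=\{1\}$, so $Mult(L)/P\cong\mathbb{R}^2$; moreover the homomorphism $Mult(L)\to Mult(L/Z(L))$ of Lemma~\ref{brucklemma}(i) carries $Inn(L)$ into $Inn(L/Z(L))=\{1\}$, whence $Inn(L)\le P$, and also $Z\le P$. The group $P$ is connected, being the kernel of a surjection of a connected Lie group onto the simply connected group $\mathbb{R}^2$, and a dimension count gives $\dim P=\dim Mult(L)-2=\dim Inn(L)+1=\dim Z+\dim Inn(L)$. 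Now $Z$ is central in $Mult(L)$; the group $Inn(L)$ fixes $Z(L)$ pointwise (\cite{bruck}, IV.1) and is therefore normalised by $P=M(Z(L))$; and $Z\cap Inn(L)=\{1\}$, since $Z$ acts sharply transitively on the orbit $Z(L)(e)$ while $Inn(L)$ fixes $e$. Consequently the connected subgroup $Z\cdot Inn(L)$ of $P$ has full dimension, so $P=Z\times Inn(L)$; this group is abelian, isomorphic to $\mathbb{R}^m$ by Lemma~\ref{innermappinglemma}(b), and $m=\dim Inn(L)+1\ge 2$ because the proper, simply connected loop $L$ has a non-trivial connected inner mapping group.

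It remains to split the exact sequence $1\to P\to Mult(L)\xrightarrow{\pi}\mathbb{R}^2\to 1$, that is, to produce a subgroup $Q\cong\mathbb{R}^2$ of $Mult(L)$ on which $\pi$ restricts to an isomorphism; this is the step I expect to be the main obstacle. As $Mult(L)$ is simply connected nilpotent (its centre $Z\cong\mathbb{R}$ contains no torus), it is equivalent to find, in the Lie algebra of $Mult(L)$, a $2$-dimensional abelian subalgebra complementary to the Lie algebra of $P$; since the derived subalgebra of the Lie algebra of $Mult(L)$ lies in the abelian Lie algebra of $P$, the obstruction to such a complement is a class in the second cohomology of the Lie algebra $\mathbb{R}^2$ with coefficients in the Lie algebra of $P$ (endowed with the adjoint action). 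This is a genuine restriction --- it is, for instance, exactly what prevents the $4$-dimensional filiform group $\mathcal{F}_4$ from occurring (compare Proposition~\ref{filiform4dim}) --- so the vanishing has to be forced from the loop structure. I would obtain it by realising $L$ as a topological central extension of $\mathbb{R}^2$ by $Z(L)\cong\mathbb{R}$ determined by a continuous factor system $f: \mathbb{R}^2\times\mathbb{R}^2 \to \mathbb{R}$ with $f(0,q)=f(q,0)=0$, describing $Mult(L)$ explicitly as the transformation group of $\mathbb{R}\times\mathbb{R}^2$ generated by the associated left and right translations, and then correcting a $2$-parameter family of left translations of $L$ (one projecting onto each translation of $\mathbb{R}^2$) by suitable elements of $P$ so that the corrected maps form a subgroup $Q\cong\mathbb{R}^2$. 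The solvability of the linear functional equation that governs this correction has to be deduced from the loop axioms --- concretely, from the facts that $\Lambda(L)$ and $R(L)$ are $Inn(L)$-connected transversals in $Mult(L)$ and that the core of $Inn(L)$ in $Mult(L)$ is trivial --- and this is the crux of the argument. Once $Q$ has been found, $P\cap Q=\{1\}$ and $PQ=Mult(L)$ give $Mult(L)=P\rtimes Q$ with $Q\cong\mathbb{R}^2$, which completes part a).
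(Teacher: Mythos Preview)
Your treatment of part~b) matches the paper's essentially line for line.

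For part~a) you arrive at the key structural fact $P=Z\times Inn(L)$ (abelian, isomorphic to $\mathbb{R}^m$) by a route different from the paper's. You obtain the abelianness of $P$ by quoting Lemma~\ref{innermappinglemma}(b), which already tells you that $Inn(L)$ is abelian, and then combining this with the centrality of $Z$. The paper instead proves directly that $P$ is abelian using the core--freeness of $Inn(L)$: the stabiliser $P_1$ of $e$ in $P$ fixes all of $P(e)=Z(L)(e)$ pointwise, hence $P_1\trianglelefteq P$; since $P/P_1\cong\mathbb{R}$ one has $P'\le P_1$, and as $P'$ is characteristic in the normal subgroup $P$ it is normal in $Mult(L)$; a nontrivial normal subgroup fixing $e$ would contradict effectiveness, so $P'=1$. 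Only afterwards does the paper identify $P_1=Inn(L)$ by codimension and conclude $P=Z\times Inn(L)$. Your route is shorter because Lemma~\ref{innermappinglemma}(b) has already absorbed the Bruck--theoretic input; the paper's route is more self-contained and makes explicit exactly where the trivial-core hypothesis does the work.

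Concerning the complement $Q\cong\mathbb{R}^2$: you are right that the splitting of the extension $1\to P\to Mult(L)\to\mathbb{R}^2\to 1$ is not automatic (your $\mathcal{F}_4$ example is the correct obstruction). However, the paper's own proof does not carry out this step either --- it stops immediately after establishing $P=Z\times Inn(L)$ and declares assertion~a) proved. In the later applications (for instance the proof of Proposition~\ref{filiform4dim}(c)) only the structure of $P$ is actually used. So your cohomology/factor--system programme, while a natural attempt, goes well beyond what the paper supplies, and you need not complete it to match the paper's argument.
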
 
\begin{proof}  By Lemma \ref{simplyconnected} the loop $L$ is homeomorphic to $\mathbb R^3$. The centre $Z(L)$ is a $1$-dimensional connected normal subgroup of $L$ isomorphic to $\mathbb R$ (cf. Lemma \ref{centrelemma}). The factor loop $L/Z(L)$ is a $2$-dimensional connected loop such that its multiplication group is a factor group of $Mult(L)$ (cf. Lemma \ref{brucklemma}). Hence $Mult(L/Z(L))$ is nilpotent and therefore $L/Z(L)$ is isomorphic either to the group $\mathbb R^2$ or to an elementary 
filiform loop $L_{\mathcal F}$ (cf. p. 4). 
\newline
\noindent
In the second case $Mult(L/Z(L))$ is isomorphic to an at least $4$-dimensional elementary filiform Lie group 
${\mathcal F}_{n+2}$, $n \ge 2$ (cf. Theorem 1 in \cite{figula}). Moreover, there exists a normal subgroup $S$ of $Mult(L)$ such that $Mult(L)/S$ is isomorphic to the group $Mult(L/Z(L))$ and the group $S$ contains the Lie group $Mult(Z(L)) \cong \mathbb R$ (cf. Lemma \ref{brucklemma}). The multiplication group of $Z(L)$ consists of the translations by elements of $Z(L)$. Hence it is isomorphic to $Z(L)$ and it is also isomorphic to the centre $Z$ of the group $Mult(L)$ (cf. Lemma \ref{centrelemma}) and the assertion b) is proved.

In the first case $Mult(L/Z(L))$ is isomorphic to $\mathbb R^2$. By Lemma \ref{brucklemma} there exists a normal subgroup $P$ of $Mult(L)$ such that $Mult(L)/P$ is isomorphic to the group $\mathbb R^2$, the group $P$ contains the Lie group $Mult(Z(L)) \cong \mathbb R$ and $P$ leaves every orbit of $Z(L)$ which is homeomorphic to $\mathbb R$ in the manifold $L$ invariant. As $Mult(L)/P \cong \mathbb R^2$ the factor group $Mult(L)/P$ operates sharply transitively on the orbits of $Z(L)$ in $L$. The nilpotent group $P$ induces on the orbit $Z(L)(e)$ the sharply transitive group $\mathbb R$. Therefore $P$ induces on every orbit $Z(L)(x)$, $x \in L$, the sharply transitive group $\mathbb R$. The stabilizer $P_1$ of $e \in L$ in $P$ fixes every point of the orbit $Z(L)(e)=P(e)$. Hence $P_1$ is a normal subgroup of $P$. Since the factor group $P/P_1 \cong \mathbb R$ the commutator subgroup $P'$ of $P$ is contained in $P_1$ and $P'$ is normal in $Mult(L)$. The group $P'$ is trivial, otherwise 
the group $Mult(L)$ would contain the normal subgroup $1 \neq P'$ which has fixed points and 
$Mult(L)$ does not operate effectively on $L$. Hence $P$ is abelian. Since $\hbox{dim} \ Mult(L) \ge 4$ the group $P$ is isomorphic to $\mathbb R^n$, 
$n \ge 2$. The inner mapping group $Inn(L)$ has codimension $3$ and 
hence $Inn(L)$ is the group $P_1$.  The group consisting of the translations by elements of $Z(L)$ is isomorphic to $Z(L)$ and it is isomorphic to the centre $Z$  of $Mult(L)$. Then $P=Z \times Inn(L)$ and the assertion a) is proved. 
\end{proof}

\section{Three-dimensional loops having the four-dimensional filiform group as their left translation group}

Let $h: \mathbb R^2 \to \mathbb R$ be a continuous function and denote by $T_{(a_1,a_2)} h(x,y)=h(x+a_1,y+a_2)$ the translation by $(a_1,a_2) \in \mathbb R^2$. An exponential polynomial on $\mathbb R^2$ is a finite linear combination of terms $x^{q_1} y^{q_2} \exp (\lambda _1 x+ \lambda _2 y)$, where $q_i$ are nonnegative integers and 
$\lambda_i \in \mathbb C$, $i=1,2$. The following proposition follows from \cite{anselone}, Theorem, p. 747, and Section 4, 
p. 751. 

\begin{Prop} \label{exponentialpolynomial} The real vector space $W$ generated by a real continuous function 
$h: \mathbb R^2 \to \mathbb R$ and by the translations $T_{(a_1,a_2)} h(x,y)$, $(a_1,a_2) \in \mathbb R^2$, has finite dimension if and only if it is the span of a set of functions 
\begin{equation} \label{sincosexp} x^{q_1} y^{q_2}(k_1 e^{\lambda _1 x} + c_1 e^{a_1 x} \sin(b_1 x) + d_1 e^{a_1 x} \cos(b_1 x)) \cdot \nonumber \end{equation} 
\begin{equation} (k_2 e^{\lambda _2 y} + c_2 e^{a_2 y} \sin(b_2 y) + d_2 e^{a_2 y} \cos(b_2 y)),  \end{equation}  
where $q_i$ are nonnegative integers, $\lambda _i, a_i, b_i, c_i, d_i, k_i \in \mathbb R$, $i=1,2$. 
\end{Prop}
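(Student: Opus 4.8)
The plan is to recognise $W$ as a finite-dimensional translation-invariant subspace of $C(\mathbb R^2)$ and to quote the structure theory of such spaces from \cite{anselone}. Concretely I shall establish the biconditional in the equivalent form: $W$ has finite dimension if and only if $h$ -- and hence every function in $W$ -- is a finite real linear combination of functions of the shape \eqref{sincosexp}. Since $h$ together with its translates spans $W$, this is the same as saying that $W$ lies in the linear span of finitely many functions of that shape.

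\emph{Assume $\dim_{\mathbb R} W < \infty$.} First, $W$ is invariant under every translation $T_{(a_1,a_2)}$: for each $b \in \mathbb R^2$ one has $T_{(a_1,a_2)} T_b h = T_{(a_1,a_2)+b}\, h \in W$, so $T_{(a_1,a_2)}$ carries the generating family $\{ T_b h : b \in \mathbb R^2 \}$ of $W$ into $W$ and therefore $T_{(a_1,a_2)} W \subseteq W$. Thus $W$ is a finite-dimensional translation-invariant subspace of $C(\mathbb R^2)$, and by \cite{anselone}, Theorem, p.~747, every function in such a space is an exponential polynomial on $\mathbb R^2$; in particular $h = \sum_{j} p_j(x,y)\, e^{\mu^{(1)}_j x + \mu^{(2)}_j y}$ with finitely many $p_j \in \mathbb C[x,y]$ and $\mu^{(i)}_j \in \mathbb C$. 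Expanding the $p_j$, the space $W$ lies in the span of the finitely many monomials $x^{q_1} y^{q_2} e^{\mu^{(1)}_j x + \mu^{(2)}_j y}$, and each such monomial is a product $(x^{q_1} e^{\mu^{(1)}_j x})(y^{q_2} e^{\mu^{(2)}_j y})$ of a function of $x$ with a function of $y$.

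It remains to pass to the real form \eqref{sincosexp}; this is the content of \cite{anselone}, Section~4, p.~751. Since $h$ is real valued and complex conjugation commutes with translations, the finite set of exponents occurring is stable under $(\mu^{(1)},\mu^{(2)}) \mapsto (\overline{\mu^{(1)}},\overline{\mu^{(2)}})$, and conjugate monomials appear with conjugate coefficients. Grouping each monomial with its conjugates, one replaces, for a non-real exponent $\mu^{(1)} = a_1 + i b_1$, the pair $e^{\mu^{(1)} x}, e^{\overline{\mu^{(1)}} x}$ by the real functions $e^{a_1 x}\cos(b_1 x)$ and $e^{a_1 x}\sin(b_1 x)$, and likewise in the variable $y$; a real exponent contributes a term $x^{q_1} y^{q_2} e^{\lambda_1 x + \lambda_2 y}$. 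After this reorganisation every real function of $W$, in particular $h$, appears as a finite real linear combination of functions of the product shape \eqref{sincosexp}.

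\emph{Conversely,} suppose $h$ is such a combination. By Euler's formula each function \eqref{sincosexp} is a complex exponential polynomial whose two exponents lie in a fixed finite set and whose polynomial part has bounded degree. Translating $x^{q_1} y^{q_2} e^{\mu^{(1)} x + \mu^{(2)} y}$ leaves the exponents unchanged and does not raise the polynomial degrees, so $h$ and all its translates $T_b h$ lie in one and the same finite-dimensional subspace of $C(\mathbb R^2)$; hence $W$, being spanned by these translates, is finite-dimensional. The only step needing genuine care is the bookkeeping in the passage to the real form -- one must track conjugate pairs simultaneously in the $x$- and the $y$-frequencies and check that the real span of the conjugates of a single complex monomial is exactly spanned by functions \eqref{sincosexp}, so that nothing is lost or spuriously added. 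The substantive ingredient, that finiteness of $\dim W$ already forces $h$ to be an exponential polynomial, is Anselone's theorem, invoked as a black box.
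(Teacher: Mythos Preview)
Your proposal is correct and follows exactly the route the paper takes: the paper does not give an independent proof of this proposition at all but simply states that it ``follows from \cite{anselone}, Theorem, p.~747, and Section~4, p.~751.'' Your write-up is a faithful unpacking of that citation --- you check that $W$ is translation-invariant, invoke Anselone--Korevaar to get exponential polynomials, and then carry out the passage to the real product form, which is precisely the content of Section~4 of that reference.
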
 

\noindent
Every $4$-dimensional simply connected nilpotent Lie group is isomorphic either to the $4$-dimensional filiform Lie group ${\mathcal F}_{4}$ or to the direct product of the $3$-dimensional simply connected non-abelian nilpotent Lie group ${\mathcal F}_{3}$ with the group $\mathbb R$. In this section 
we classify the $3$-dimensional connected simply connected topological  loops having the $4$-dimensional filiform Lie group ${\mathcal F}_4$ as the group topologically generated by their left translations. The multiplication of the loops in this class depends on a continuous real function of one or two variables. The multiplication groups of these loops are Lie groups precisely if the continuous function occuring in the loop multiplication derives from exponential polynomials. 
We prove that the group ${\mathcal F}_4$ cannot be the multiplication group of $3$-dimensional topological loops.  

\begin{Prop} \label{filiform4dim} Let $G$ be the $4$-dimensional filiform Lie group ${\mathcal F}_{4}$ and choose for $G$ the representation on $\mathbb R^4$ by the multiplication 
\[ g(x_1, x_2, x_3, x_4) g(y_1, y_2, y_3, y_4) = \]
\[ g(x_1+y_1, x_2+y_2, x_3+y_3 -x_2 y_1, x_4+y_4- y_1 x_3+ \frac{1}{2} x_2 y_1^2). \] 
Let $H$ be a subgroup of $G$ which is isomorphic to $\mathbb R$ and which is not normal in $G$, then using automorphisms of $G$ we may choose $H$ in one of the following forms: 
\[ H_1=\{ g(0,0,v,0); v \in \mathbb R \}, \ \   H_2=\{ g(v,0,0,0); v \in \mathbb R \}. \]    
\noindent
a) Every continuous sharply transitive section $\sigma : G/H_1 \to G$ with the properties that $\sigma (G/H_1)$ generates $G$ and 
$\sigma (H_1)=1$ is determined by the map $\sigma _f: g(x,y,0,z) H_1 \mapsto g(x, y, f(x,y), z)$, 
where $f: \mathbb R^2 \to \mathbb R$ is a continuous function with $f(0,0)=0$. 
The multiplication of the loop $L_f$ corresponding to $\sigma _f$ can be written as 
\begin{equation} \label{multiplication4dimelso} (x_1,y_1,z_1) \ast (x_2,y_2,z_2) = \nonumber \end{equation} 
\begin{equation} (x_1+x_2, y_1+y_2, z_1+z_2- x_2 f(x_1,y_1)+ \frac{1}{2} x_2^2 y_1). \end{equation} 
\noindent
The multiplication group $Mult(L_f)$ of the loop $L_f$ is a Lie group precisely if $f(x,y)$ is a finite linear combination of 
functions given by (\ref{sincosexp}). Moreover, if $f(x,y)=f(y)$ has the form $f(y)=\sum \limits _{i=1}^n b_i y^i$, $b_i \in \mathbb R$, then the group $Mult(L_f)$ is isomorphic to the direct product ${\mathcal F}_4 \times _Z {\mathcal F}_{n+2}$ of the elementary filiform Lie groups ${\mathcal F}_4$ and ${\mathcal F}_{n+2}$ with amalgamated centre $Z$, $n \ge 1$. 
\newline
\noindent
b) Each continuous sharply transitive section $\sigma : G/H_2 \to G$ such that $\sigma (G/H_2)$ generates $G$ and 
$\sigma (H_2)=1$ is determined by the map 
\[ \sigma _h: g(0,x,y,z) H_2 \mapsto g(h(x), x, y- x h(x), z - y h(x) +\frac{1}{2} x h(x)^2),  \]
where $h: \mathbb R \to \mathbb R$ is a non-linear continuous function with $h(0)=0$. 
The multiplication of the loop $L_h$ corresponding to $\sigma _h$ can be written as 
\begin{equation} \label{multiplication4dimmasodik} (x_1,y_1,z_1) \ast (x_2,y_2,z_2) = \nonumber \end{equation} 
\begin{equation} (x_1+x_2, y_1+y_2+ x_2 h(x_1), z_1+z_2+ y_2 h(x_1)+ \frac{1}{2} x_2 h(x_1)^2). \end{equation} 
\newline
\noindent
The multiplication group $Mult(L_v)$ of the loop $L_v$ is a Lie group if and only if the function $h(x)$ is a finite linear combination of functions 
\begin{equation} \label{loopv} x^{q}(c_1 e^{\lambda  x}+ c_2 e^{a x} \sin(b x) + c_3 e^{a x} \cos(b x)), \nonumber  \end{equation}
where $q$ is a nonnegative integer, $\lambda , a, b, c_1, c_2, c_3 \in \mathbb R$. 
\newline
\noindent 
c) The group $G$ cannot be the multiplication group of a topological loop homeomorphic to $\mathbb R^3$.  
\end{Prop}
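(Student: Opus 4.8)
The plan is to argue by contradiction: assume some topological loop $L$ homeomorphic to $\mathbb R^3$ has $Mult(L)\cong G=\mathcal F_4$. Since $\mathcal F_4$ is nilpotent, $L$ is centrally nilpotent (Section~2). First I dispose of the case that $L$ is a group: then $L$ is a $3$-dimensional simply connected nilpotent Lie group, hence $\mathbb R^3$ or $\mathcal F_3$, and $Mult(\mathbb R^3)\cong\mathbb R^3$ (left and right translations coincide) while $\dim Mult(\mathcal F_3)=3+3-\dim Z(\mathcal F_3)=5$, neither of which is $\mathcal F_4$. So $L$ is a proper loop, and Lemma~\ref{centrelemma} together with Propositions~\ref{2dimensionalcentre} and \ref{1dimensionalcentreegy} are available.

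Next I pin down the centre. By \cite{flugfelder}, p.~25 (as in the proof of Lemma~\ref{centrelemma}), $Z(L)\cong Z(Mult(L))=Z(\mathcal F_4)$, and $Z(\mathcal F_4)$ is $1$-dimensional; hence $\dim Z(L)=1$, so Proposition~\ref{2dimensionalcentre} does not apply. The factor loop $L/Z(L)$ is then a $2$-dimensional connected centrally nilpotent loop homeomorphic to $\mathbb R^2$, hence isomorphic either to $\mathbb R^2$ or to a $2$-dimensional elementary filiform loop $L_{\mathcal F}$, so we are in case (a) or in case (b) of Proposition~\ref{1dimensionalcentreegy}.

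If $L/Z(L)\cong L_{\mathcal F}$, Proposition~\ref{1dimensionalcentreegy}(b) produces a normal subgroup $S\le Mult(L)$ with $Z\le S$ and $Mult(L)/S\cong\mathcal F_{n+2}$ for some $n\ge 2$; comparing dimensions gives $\dim S=4-(n+2)\le 0$, so $S$ is discrete, contradicting $\dim Z=1$. If $L/Z(L)\cong\mathbb R^2$, Proposition~\ref{1dimensionalcentreegy}(a) gives $Mult(L)=P\rtimes Q$ with $P\cong\mathbb R^m$ abelian and $Q\cong\mathbb R^2$, and $\dim Mult(L)=4$ forces $m=2$; thus $\mathcal F_4$ would be a split extension of $\mathbb R^2$ by $\mathbb R^2$. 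To see that this is impossible I pass to the Lie algebra of $\mathcal F_4$: with basis $e_1,\dots,e_4$ and $[e_1,e_i]=e_{i+1}$, the derived subalgebra equals $\langle e_3,e_4\rangle$, which is $2$-dimensional, so the normal factor forces $P=\langle e_3,e_4\rangle$; a complement corresponding to $Q$ would be a subalgebra spanned by $e_1+u$ and $e_2+v$ with $u,v\in\langle e_3,e_4\rangle$, but $[e_1+u,\,e_2+v]=e_3+[e_1,v]$ is a nonzero element of $e_3+\langle e_4\rangle$ which has vanishing $e_1$- and $e_2$-components, hence does not lie in the span of $e_1+u$ and $e_2+v$; so those two vectors span no subalgebra at all. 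Both cases being impossible, $\mathcal F_4\ne Mult(L)$.

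I expect the only delicate point to be this last contradiction, where one must argue non-splitting at the level of subalgebras rather than at the level of abstract extensions ($\mathcal F_4$ is metabelian but not a \emph{split} metabelian group of the required type); the remainder is bookkeeping with the structure results already established, plus the separate treatment of the group case (needed because Propositions~\ref{2dimensionalcentre} and \ref{1dimensionalcentreegy} are stated for proper loops). One could alternatively route the proof through parts (a) and (b): if the left translations generate $\mathcal F_4$ then $L$ is one of the loops $L_f$, $L_h$ and a direct computation shows that their right translations generate a group strictly larger than $\mathcal F_4$; but the argument via the centre avoids those calculations.
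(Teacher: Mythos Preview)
Your proof is correct and takes a genuinely different route from the paper. The paper deduces (c) from the classifications already carried out in (a) and (b): for the loops $L_f$ (case $H=H_1$) it shows, inside the proof of (a), that the right-translation group $G_\rho$ already differs from $\mathcal F_4$; for the loops $L_h$ (case $H=H_2$) it invokes Proposition~\ref{1dimensionalcentreegy}(a) as you do, but draws the contradiction from $Inn(L_h)=H_2\not\subset P=\{g(0,0,x_3,x_4)\}$ rather than from the non-existence of a complement $Q$. Your argument is more structural: it never touches the explicit sections, treats the group case separately (which the paper glosses over), disposes of the filiform-quotient branch by a clean dimension count via Proposition~\ref{1dimensionalcentreegy}(b), and reduces the $\mathbb R^2$-quotient branch to the single Lie-algebra fact that ${\bf g}$ admits no $2$-dimensional subalgebra complementing its derived ideal $\langle e_3,e_4\rangle$. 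The paper's route recycles computations already performed for (a) and (b); yours is self-contained and would apply verbatim to any candidate nilpotent $Mult(L)$ of dimension $4$ with one-dimensional centre and non-split abelianisation, without first having to classify the loops.
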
 
\begin{proof} The Lie algebra ${\bf g}$ of $G$ is given by the basis $\{ e_1, e_2, e_3, e_4 \}$ with $[e_1, e_2]=e_3$, $[e_1, e_3]=e_4$. 
We can represent the elements of $G$ as the matrices 
\begin{equation}  g(x_1,x_2,x_3,x_4)= \left( \begin{array}{cccc} 
1 & x_3 & x_2  & x_4 \\
0 & 1 & 0 & -x_1  \\
0 & -x_1 & 1 & \frac{x_1^2}{2} \\
0 & 0 & 0 & 1 \end{array} \right)  \nonumber \end{equation}
with $x_i \in \mathbb R, i=1,2,3,4$. 
Hence the multiplication of the group $G$ can be represented on $\mathbb R^4$ as given in the assertion. 
The subgroup $\exp t e_4$, $t \in \mathbb R$, is the centre of $G$, the subgroup $\exp (t e_3 + s e_4)$, 
$t, s \in \mathbb R$, is the commutator subgroup of $G$.  Hence the automorphism group 
of ${\bf g}$ consists of linear mappings 
\[ \varphi(e_1)= a_1 e_1 + a_2 e_2 + a_3 e_3+ a_4 e_4, \ \ \varphi(e_2)= b_1 e_1+ b_2 e_2 + b_3 e_3 + b_4 e_4, \ \ \]
\[ \varphi(e_3)= (a_1 b_2- a_2 b_1) e_3+ (a_1 b_3- a_3 b_1) e_4, \ \ \varphi(e_4)= a_1 (a_1 b_2-a_2 b_1) e_4, \]
with $a_1 b_2- a_2 b_1 \neq 0, a_1 \neq 0$, $a_2, a_3, a_4, b_2, b_3, b_4 \in \mathbb R$.  
\newline
\noindent   
Let $H$ be a subgroup of $G$ which is isomorphic to $\mathbb R$ and which is not normal in $G$. Then $H$ is a subgroup 
$\exp t(\alpha e_1 + \beta e_2 + \gamma e_3 + \delta e_4)$ with $t \in \mathbb R$ and 
$\alpha ^2 + \beta ^2 + \gamma ^2=1$. Then a suitable automorphism of $G$ corresponding to an automorphism $\varphi $ of 
${\bf g}$ maps $H$ onto one of the following subgroups 
\[ H_1= \exp t e_3,  \ H_2=\exp t e_1. \] 
\noindent
First we assume that $H=H_1=\{ g(0,0,k,0); \ k \in \mathbb R \}$. Since all elements of $G$ have a unique decomposition as 
$g(x, y, 0, z)  g(0, 0, k, 0)$, any continuous function
$f: \mathbb R^3 \to \mathbb R; (x,y, z) \mapsto f(x,y,z)$ determines a continuous section $\sigma : G/H \to G$ given by 
\begin{equation} \label{section4dimelso} g(x,y,0,z) H \mapsto  g( x, y, 0, z) g(0, 0, f(x,y,z), 0) = g(x, y, f(x,y,z), z). \nonumber \end{equation}  
The section $\sigma $ is sharply transitive if and only if for every triples $(x_1,y_1,z_1)$, $(x_2,y_2,z_2) \in \mathbb R^3$ there exists precisely one triple  $(x,y,z) \in \mathbb R^3$ such that 
\[ g(x, y, f(x,y,z), z) g(x_1, y_1, 0, z_1)= g(x_2, y_2, 0, z_2) g(0,0,t,0) \] 
for a suitable $t \in \mathbb R$. This gives the equations
\[x= x_2-x_1, \  y= y_2-y_1, \ t= f(x_2-x_1,y_2-y_1,z) - (y_2-y_1) x_1, \]
\begin{equation} \label{equ5} 0= z +z_1 -z_2 + \frac{1}{2} (y_2 -y_1) x_1^2 - x_1 f(x_2-x_1,y_2 -y_1,z). \nonumber \end{equation}
These are equivalent to the condition that for every $x_0= x_2-x_1$, $y_0= y_2- y_1$ and $x_1 \in \mathbb R$ the function 
$g: z \mapsto z - x_1 f(x_0, y_0, z): \mathbb R \to \mathbb R$ is a bijective mapping. Let be $\psi_1 < \psi_2 \in \mathbb R$ then 
$g(\psi_1) \neq g(\psi_2)$, e.g. $g(\psi _1) < g(\psi _2)$. We consider 
\[ 0 <  g(\psi _2) - g(\psi _1)= \psi _2 - \psi _1 - x_1 [f(x_0, y_0, \psi_2 ) - f(x_0, y_0, \psi_1 )] \]
as a linear function of $x_1 \in \mathbb R$. If $f(x_0, y_0, \psi_2 ) \neq f(x_0, y_0, \psi_1 )$, then there exists a $x_1 \in \mathbb R$ 
such that $g(\psi _2) - g(\psi _1) =0$, which is a contradiction. Hence the function $f(x, y, z)=f(x,y)$ does not depend on $z$. In this case $g$ is a monotone function and every continuous function $f(x,y)$ with $f(0,0)=0$ determines a loop multiplication. 
\newline
\noindent
We represent the loop $L_f$ in the coordinate system $(x,y,z) \mapsto g(x,y,0,z)H$.  The multiplication  
$(x_1,y_1,z_1) \ast (x_2,y_2,z_2)$ is determined if we apply $g(x_1,y_1,0,z_1)H$$=g(x_1, y_1, f(x_1,y_1), z_1)$ 
to the left coset $g(x_2,y_2,0,z_2)H$ and find in the image coset the element of $G$ which is in the set 
$\{ g(x,y,0,z)H; x, y, z \in \mathbb R \}$. A direct calculation 
gives the multiplication (\ref{multiplication4dimelso}) in the assertion. 
\newline 
\noindent 
This loop is proper precisely if the set $\sigma (G/H)=\{ g(x, y, f(x,y), z); \ x,y,z \in \mathbb R \}$ 
generates the whole group $G$.    
The set $\sigma (G/H)$ contains the subset 
\[ F=\{ g(x, y, f(x,y), 0); \ x,y \in \mathbb R \} \] and 
the centre  $Z=\{ g(0,0,0,z); \ z \in \mathbb R \}$ of $G$. The set $\sigma (G/H)$ generates $G$ if and only if the projection of the group $\langle F \rangle $ generated by the set $F$ onto the set $S=\{ g(k,l,m,0); k,l,m \in \mathbb R \}$ has dimension $3$. The set $F$ contains the subsets 
$F_1=\{ g(x, 0, f(x,0), 0); \ x \in \mathbb R \}$ and $F_2=\{ g(0, y, f(0,y), 0); \ y \in \mathbb R \}$. Therefore $\sigma (G/H)$ generates $G$ 
if the projection of the group $\langle F_1 \rangle $ generated by the set $F_1$ onto the set $S$ has dimension $2$.
This is the case if the group  $\langle F_1 \rangle $ is not a $1$-parameter subgroup. But $F_1$ is a $1$-parameter subgroup if and only if $f(x,0)= \lambda x, \lambda \in \mathbb R$. In this case the set $\sigma (G/H)$ generates $G$ if there exists an element $h \in F_2$ such that the projection of the set $h F_1 h^{-1}$ onto the set $S$ is different from 
$F_1$. Since for every $h=g(0, y, f(0,y), 0)$ with $y \neq 0$ we have $pr(g(0, y, f(0,y), 0) g(x, 0, \lambda x, 0) 
g(0, -y, -f(0,y), 0))= g(x,0,\lambda x -x y,0) \notin F_1$ the set $\sigma (G/H)$ generates $G$ for arbitrary continuous function $f(x,y)$ with $f(0,0)=0$.  

\noindent
The right translation $\rho _{(a,b,c)}$ of the loop $L_f$ is the map 
\begin{equation} \label{right1equ} \rho _{(a,b,c)}: (x,y,z) \mapsto (x,y,z) \ast (a,b,c)= (x+a, y+b, z+c-a f(x,y)+ \frac{1}{2} a^2 y). \nonumber \end{equation} 
Its inverse map $\rho _{(a,b,c)}^{-1}$ is given by 
\begin{equation} \label{right2equ} \rho _{(a,b,c)}^{-1}: (x,y,z) \mapsto (x-a, y-b, z-c+a f(x-a,y-b)- \frac{1}{2} a^2 (y-b)). \nonumber \end{equation} 
Since $\rho_{(0,d_1,e_1)} \rho_{(0,d_2,e_2)}= \rho_{(0,d_1+d_2, e_1+e_2)}$ and  
one has \[ \rho _{(a,b,c)} \rho_{(0,d,e)} \rho _{(a,b,c)}^{-1} = \rho _{(0,d,e+ a (f(x-a,y-b)- f(x-a,y-b+d)) + \frac{1}{2} a^2 d)} \]
the group $G_{\rho }$ topologically generated by the right translations of $L_f$ has a normal subgroup 
$N_{\rho }= \{ \rho _{(0,d,e)}; d,e \in \mathbb R \} \cong \mathbb R^2$. Because of 
 $\rho _{(a,b,c)} = \rho _{(0,b,c)} \rho _{(a,0,0)}$ one has $G_{\rho }= N_{\rho } \Sigma $, 
where $\Sigma $ is the group generated by the set $\{ \rho _{(a,0,0)}; a \in \mathbb R \}$. 
The group $G_{\rho }$ and hence the multiplication group $Mult(L_f)$ is a finite dimensional Lie group precisely if the group 
$\Sigma $ has finite dimension. 
As $\rho _{(a_2,0,0)} \rho _{(a_1,0,0)}$ is the map $(x,y,z) \mapsto (x+a_1+a_2,y,z-a_1 f(x,y)-a_2 f(x+a_1,y)+ \frac{1}{2} (a_1^2+a_2^2)y)$ the group $\Sigma $ is a subgroup of the transformation group $\Gamma $ consisting of the maps 
\begin{equation} \label{sigmaelso} 
\gamma _{(t_1, t_2, \alpha _i, \beta _i)}: (x,y,z) \mapsto (x+t_1,y,z+ \sum \alpha _i f(x+ \beta _i,y)+ \frac{1}{2} t_2 y), 
\nonumber \end{equation} 
where $t_1, t_2, \alpha _i, \beta _i \in \mathbb R$. By Proposition \ref{exponentialpolynomial} the group 
$\Gamma $ and hence the group $\Sigma $ is a finite dimensional Lie group precisely if the function $f(x,y)$ is a finite linear combination of functions given by (\ref{sincosexp}). 
\newline
\noindent
The group $G_{\rho }$ contains the $1$-parameter subgroups 
$S_1=\{ \rho _{(0,t,0)}: (x,y,z) \mapsto (x, y+t, z); t \in \mathbb R \}$ and 
$S_2=\{ \rho _{(0,0,t)}: (x,y,z) \mapsto (x, y, z+t); t \in \mathbb R \}$. Moreover, if $f(x,y)=f(y)$, then $G_{\rho }$  contains also the $1$-parameter subgroups 
\begin{equation} \label{righttrans} 
S_3^a=\{ \rho _{(a,0,0)}^t: (x,y,z) \mapsto (x+ t a, y, z-t a f(y)+ \frac{1}{2} t a^2 y); t \in \mathbb R \} \nonumber \end{equation}
for all $a \in \mathbb R$.  
The tangent vectors of the $1$-parameter subgroups $S_i$, $i=1,2,3$, at $1 \in G_{\rho }$ are 
$X_1=\frac{\partial }{\partial y}$, $X_2=\frac{\partial }{\partial z}$, 
$X_3^a=\frac{\partial }{\partial x} - f(y) \frac{\partial }{\partial z}+ \frac{1}{2} a y \frac{\partial }{\partial z}$ for all $a \in \mathbb R \setminus \{ 0 \}$. Hence the Lie algebra ${\bf g}_{\rho }$ of the group $G_{\rho }$ is given by 
${\bf g}_{\rho }= \langle \frac{\partial }{\partial y}, \frac{\partial }{\partial z}, y \frac{\partial }{\partial z}, \frac{\partial }{\partial x} - f(y) \frac{\partial }{\partial z} \rangle $. 
Already in the case that the function $f(x,y)$ does not depend on the variable $x$ the group 
$G_{\rho }$ is different from the group ${\mathcal F}_4$ and hence for any function $f(x,y)$ the multiplication group $Mult(L_f)$ of the loop $L_f$ cannot be isomorphic to ${\mathcal F}_4$.   
\newline
\noindent
The Lie algebra ${\bf g}$ of the group $G={\mathcal F}_{4}$ topologically generated by all left translations of the loop 
$L_f$ is given by 
${\bf g}=\langle \frac{\partial }{\partial x}, x \frac{\partial }{\partial z}, \frac{\partial }{\partial z}, 
x^2 \frac{\partial }{\partial z} \rangle $. 
\newline
\noindent 
If $f(x,y)= f(y)= \sum \limits _{j=1}^n b_n y^j$, $b_n \in \mathbb R$, then the tangent space of the manifold 
$\mathcal R =\{ \rho _x; x \in L_f \}$ at the identity of $G_{\rho }$ generates the Lie algebra 
${\bf g_{\rho }}= \{ \frac{\partial }{\partial y}, \frac{\partial }{\partial z}, \frac{\partial }{\partial x}, 
y \frac{\partial }{\partial z} \}$ if $f(y)=0$  and  
${\bf g_{\rho }}= \{ \frac{\partial }{\partial y}, \frac{\partial }{\partial z}, \frac{\partial }{\partial x} - b_n y^n \frac{\partial }{\partial z}, y^{n-1} \frac{\partial }{\partial z}, \cdots , y \frac{\partial }{\partial z} \}$ if 
$b_n \neq 0$. Hence the set of the basis elements of the Lie algebra of 
$Mult(L_f)$ is $\{ \frac{\partial }{\partial x}, x^2 \frac{\partial }{\partial z}, 
x \frac{\partial }{\partial z}, \frac{\partial }{\partial y}, y^n \frac{\partial }{\partial z}, 
y^{n-1} \frac{\partial }{\partial z}, \cdots , y \frac{\partial }{\partial z}, \frac{\partial }{\partial z} \}$, $n \ge 1$. Therefore the multiplication group $Mult(L_f)$ is the direct product ${\mathcal F}_{4} \times _Z {\mathcal F}_{n+2}$ of the elementary filiform Lie groups ${\mathcal F}_{4}$ and ${\mathcal F}_{n+2}$, $n \ge 1$, with amalgamated centre $Z$. Hence the assertion a) is proved. 

\medskip
\noindent 
Now we consider the case that $H=H_2=\{ g(v, 0, 0, 0); \ v \in \mathbb R \}$. Since all elements of $G$ have a unique decomposition as 
$g(0, x, y, z)  g(v, 0, 0, 0)$, any continuous function
$h: \mathbb R^3 \to \mathbb R; (x,y, z) \mapsto h(x,y,z)$ determines a continuous section $\sigma :G/H \to G$ given by
\[ \sigma: g(0,x,y,z) H \mapsto  g(0, x, y, z) g(h(x,y,z), 0, 0, 0)= \]
\[ g(h(x,y,z), x, y- x h(x,y,z), z - y h(x,y,z) + \frac{1}{2} x h(x,y,z)^2).  \]
The section $\sigma $ is sharply transitive if and only if for every triples $(x_1,y_1,z_1)$, $(x_2,y_2,z_2) \in \mathbb R^3$ there exists precisely one triple  $(x,y,z) \in \mathbb R^3$ such that
\begin{equation}  g(h(x,y,z), x, y- x h(x,y,z), z - y h(x,y,z) + \frac{1}{2} x h(x,y,z)^2)g(0,x_1,y_1,z_1)=  \nonumber \end{equation}  
\begin{equation} g(0,x_2,y_2,z_2) g(t,0,0,0) \nonumber \end{equation}
for a suitable $t \in \mathbb R$. This gives the equations $x=x_2-x_1$, $t=h(x_2-x_1,y,z)$, 
\begin{equation} \label{equ2} y + x_1 h(x_2-x_1, y, z)= y_2-y_1,  \end{equation} 
\begin{equation} \label{equ1} z +  y_1 h(x_2-x_1,y,z) + \frac{1}{2} x_1 h(x_2-x_1,y,z)^2= z_2-z_1. \end{equation}
\newline
For $x_1=0$ equation (\ref{equ2}) yields that $y=y_2-y_1$ and equation (\ref{equ1}) reduces to 
\begin{equation} \label{equuj1} z  = z_2-z_1 -y_1 h(x_2,y_2-y_1,z). \nonumber \end{equation}
This equation has a unique solution for $z$ if and only if 
for every $x_0= x_2$, $y_0= y_2- y_1$ and $y_1 \in \mathbb R$ the function 
$f: z \mapsto z + y_1 h(x_0, y_0, z): \mathbb R \to \mathbb R$ is a bijective mapping. Let be $\psi_1 < \psi_2 \in \mathbb R$ then 
$f(\psi_1) \neq f(\psi_2)$, e.g. $f(\psi _1) < f(\psi _2)$. We consider 
\[ 0 <  f(\psi _2) - f(\psi _1)= \psi _2 - \psi _1 + y_1 [h(x_0, y_0, \psi_2 ) - h(x_0, y_0, \psi_1 )] \]
as a linear function of $y_1 \in \mathbb R$. If $h(x_0, y_0, \psi_2 ) \neq h(x_0, y_0, \psi_1 )$, then there exists a $y_1 \in \mathbb R$ 
such that $f(\psi _2) - f(\psi _1) =0$, which is a contradiction. Hence the function $h(x, y, z)=h(x,y)$ does not depend on $z$. 
Using this, equations (\ref{equ2}) and (\ref{equ1}) give  
\begin{equation} \label{equ2masodik} y = y_2-y_1 -x_1 h(x_2-x_1, y),  \end{equation} 
\begin{equation} \label{equ1masodik} z = z_2-z_1 -y_1 h(x_2-x_1,y) - \frac{1}{2} x_1 h(x_2-x_1,y)^2. \end{equation}
\noindent
Equation (\ref{equ2masodik}) has a unique solution for $y$ precisely if for every $x_0= x_2-x_1$ and $x_1 \in \mathbb R$ the function 
$f: y \mapsto y + x_1 h(x_0, y): \mathbb R \to \mathbb R$ is a bijective mapping. Let be $\psi_1 < \psi_2 \in \mathbb R$ then 
$f(\psi_1) \neq f(\psi_2)$, e.g. $f(\psi _1) < f(\psi _2)$. We consider 
$0 <  f(\psi _2) - f(\psi _1)= \psi _2 - \psi _1 + x_1 [h(x_0, \psi_2 ) - h(x_0, \psi_1 )]$ 
as a linear function of $x_1 \in \mathbb R$. If $h(x_0, \psi_2 ) \neq h(x_0, \psi_1 )$, then there exists a 
$x_1 \in \mathbb R \setminus \{ 0 \}$ 
such that $f(\psi _2) - f(\psi _1) =0$, which is a contradiction. Hence the function $h(x, y)=h(x)$ does not depend on $y$. But then the unique solution of (\ref{equ1masodik}) is $z=z_2-z_1 -y_1 h(x_2-x_1) - \frac{1}{2} x_1 h(x_2-x_1)^2$. Hence each continuous function $h(x)$ with $h(0)=0$ determines a loop multiplication. 
\newline 
\noindent 
The multiplication of the loop $L_h$ can be expressed in the coordinate system
$(x,y,z) \mapsto g(0,x,y,z)H$ if we apply $\sigma _h(g(0,x_1,y_1,z_1)H)= $
\begin{equation} =g(h(x_1),x_1,y_1- x_1 h(x_1), z_1 - y_1 h(x_1) +\frac{1}{2} x_1 h(x_1)^2) \nonumber \end{equation}
to the left coset $g(0,x_2,y_2,z_2)H$ and find in the image coset the element of $G$ which lies in the set 
$\{g(0,x,y,z)H; \ x,y,z \in \mathbb R \}$.
A direct computation yields the multiplication (\ref{multiplication4dimmasodik}) in the assertion. 
\newline
\noindent
This loop is proper precisely if the set 
\[ \sigma (G/H)=\{ g(h(x),x,y- x h(x), z - y h(x) +\frac{1}{2} x h(x)^2); \\ x,y,z \in \mathbb R \}  \]
generates the whole group $G$.    
The set $\sigma (G/H)$ contains the commutator subgroup $G'=\{ g(0,0,y,z); \ y,z \in \mathbb R \}$ of $G$ and the subset 
\[ F=\{ g(h(x),x, - x h(x), \frac{1}{2} x h(x)^2);  \ x \in \mathbb R \}. \]  
As 
$g(h(x_1),x_1, - x_1 h(x_1), \frac{1}{2} x_1 h(x_1)^2) g(h(x_2),x_2, - x_2 h(x_2), \frac{1}{2} x_2 h(x_2)^2) = $ 
\begin{equation} g(h(x_1)+h(x_2), x_1+x_2, -x_1 h(x_1) - (x_2+x_1) h(x_2), \nonumber \end{equation} 
\begin{equation} \frac{1}{2} (x_1+x_2) h(x_2)^2 + \frac{1}{2} x_1 h(x_1)^2 + x_1 h(x_1) h(x_2)),  \nonumber \end{equation} 
the group $G'$ and the subgroup $\langle F \rangle $ topologically generated by the set $F$ generate $G$ precisely if the mapping assigning to the second component of any element of $\langle F \rangle $
its first component is not a homomorphism. This occurs if and only if the function $h$ is non-linear. 

\noindent
The right translation $\rho _{(a,b,c)}$ of the loop $L_h$ is the map 
\begin{equation} \label{right1equuj} (x,y,z) \mapsto (x,y,z) \ast (a,b,c)= (x+a, y+b+a h(x), z+c+ b h(x)+ \frac{1}{2} a h(x)^2). \nonumber \end{equation} 
Its inverse map $\rho _{(a,b,c)}^{-1}$ is given by 
\begin{equation} \label{right2equuj} (x,y,z) \mapsto (x-a, y-b-a h(x-a), z-c-b h(x-a)- \frac{1}{2} a h(x-a)^2). \nonumber \end{equation} 
As $\rho _{(0,d_2,e_2)} \rho _{(0,d_1,e_1)} = \rho _{(0,d_1+d_2,e_1+e_2)}$ and 
$\rho _{(a,b,c)}^{-1} \rho_{(0,d,e)} \rho _{(a,b,c)}= \rho _{(0,d,e+ d (h(x+a)- h(x)))}$ 
 the group $G_{\rho }$ topologically generated by all right translations of $L_h$ contains an abelian normal subgroup 
 $N_{\rho }= \{ \rho _{(0,d,e)}; d,e \in \mathbb R \}$. Because of 
 $\rho _{(a,b,c)} = \rho _{(0,b,c)} \rho _{(a,0,0)}$ one has $G_{\rho }= N_{\rho } \Sigma $, 
where $\Sigma $ is the group generated by the set $\{ \rho _{(a,0,0)}; a \in \mathbb R \}$. The group $G_{\rho }$ and hence the multiplication group $Mult(L_h)$ is a finite dimensional Lie group precisely if the group $\Sigma $ has finite dimension. 
Since  $\rho _{(a_2,0,0)} \rho _{(a_1,0,0)}$ is the map $(x,y,z) \mapsto (x+a_1+a_2,y+a_1 h(x)+a_2 h(x+a_1),z+ \frac{1}{2} a_1 h(x)^2+ \frac{1}{2} a_2 h(x+a_1)^2)$ the group $\Sigma $ is a subgroup of the transformation group $\Gamma $ consisting of the maps 
\begin{equation} \label{righth5equ} \gamma _{t,\alpha _i, \beta _i}:(x,y,z) \mapsto (x+t, y+ \sum \alpha _i h(x+ \beta _i), 
z+ \frac{1}{2} \sum \alpha _i h(x+ \beta _i)^2), \nonumber \end{equation}
where $t, \alpha _i, \beta _i \in \mathbb R$. It follows from Theorem in \cite{anselone}, p. 747, that the group $\Gamma $ and hence $\Sigma $ is a finite dimensional Lie group precisely if the function $h(x)$ has the form as in the assertion b). Hence the assertion b) is proved.

\medskip
\noindent
If the group $G$ is also the group topologically generated by all left and right translations of a topological loop $L_h$ defined in case b), then the inner mapping group of $L_h$ is the subgroup $Inn(L_h)=H_2=\{ g(v, 0, 0, 0); \ v \in \mathbb R \}$. Since the loop $L_h$ satisfies the conditions of Proposition \ref{1dimensionalcentreegy} a) the group $G=Mult(L_h)=P \rtimes Q$, such that $P=Z \times Inn(L_h)$ is a $2$-dimensional abelian normal subgroup of $G$ containing the centre $Z$ of $G$. The only abelian normal subgroup with the desired properties is $P=\{ g(0,0,x_3,x_4); x_3, x_4 \in \mathbb R \}$. But 
$H_2$ is not contained in $P$. This contradiction shows that $G$ is not the multiplication group of a loop $L_h$. Hence the assertion c) is proved. \end{proof}

\section{$3$-dimensional topological loops with $2$-dimensional centre}

Let $L$ be a proper connected simply connected topological loop  which has a $2$-dimensional centre. 
By Proposition \ref{2dimensionalcentre} the multiplication group $Mult(L)$ has a $2$-dimensional centre $\cong \mathbb R^2$ and $Mult(L)$ contains an abelian normal subgroup $M$ of codimension $1$. If $\hbox{dim} \ Mult(L) \le 5$, then according to the classification of nilpotent Lie algebras in \cite{handbook}, pp. 650-652, the group  $Mult(L)$ is isomorphic either to the direct products ${\mathcal F}_{3} \times \mathbb R$ or to ${\mathcal F}_{4} \times \mathbb R$, 
where ${\mathcal F}_{n+2}$ is the elementary filiform Lie group of dimension $n+2$, or it is the unique $5$-dimensional indecomposable simply connected nilpotent Lie group such that its $2$-dimensional centre coincides with the commutator subgroup. 

First we classify all $3$-dimensional simply connected topological loops $L$ having the group ${\mathcal F}_{3} \times \mathbb R$ as the group topologically generated by the left translations of $L$. The multiplication of the loops  in this class depends on a continuous real function $v(x,z)$ of two variables. The multiplication group of the loops $L_v$ is a Lie group precisely if the function $v(x,z)$ derives from an exponential polynomial. 
The function $v(x,z)=v(x)$ is a polynomial of degree $n$ if and only if the multiplication group of $L_v$ is isomorphic to the group ${\mathcal F}_{n+2} \times \mathbb R$, $n >1$. Moreover, we give examples for $3$-dimensional topological loops $L$ such that the group topologically generated by all left translations is isomorphic to the group ${\mathcal F}_{m+2} \times \mathbb R$ with $m >1$ and coincides with the multiplication group of $L$.  

\begin{Prop} \label{directproduct} 
(i) Every $3$-dimensional proper connected simply connected topological  loop $L$ having the group ${\mathcal F}_{3} \times \mathbb R$ as the group topologically generated by the left translations of $L$ is given by the multiplication 
{\begin{equation} \label{multiplileft2} (x_1,y_1,z_1) \ast (x_2,y_2,z_2)= (x_1+x_2, y_1+y_2-x_2 v(x_1,z_1), z_1+z_2),  \end{equation} }
\noindent
where $v: \mathbb R^2 \to \mathbb R$ is a continuous function with $v(0,0)=0$ such that 
the function $v$ does not fulfill the identities $v(x,0)=c x$ and $v(0,z)= d z$, $c,d \in \mathbb R$, simultaneously. 
\noindent 
The multiplication group $Mult(L_{v})$ of the loop $L_{v}$ is a Lie group precisely if $v(x,z)$ is a finite linear combination 
of functions given by (\ref{sincosexp}). Moreover, the group $Mult(L_v)$ is isomorphic to ${\mathcal F}_{n+2} \times \mathbb R$ with $n \ge 2$ if and only if the continuous function $v:\mathbb R^2 \to \mathbb R$ depends only on the variable $x$ and $v(x)$ is a polynomial of degree $n$.  
\newline
\noindent 
(ii) Let $L$ be a connected simply connected topological loop $L=(\mathbb R^3, \ast )$ with the multiplication 
\begin{equation} \label{nilpotentszorzas} (x_1,y_1,z_1) \ast (x_2,y_2,z_2)= \nonumber \end{equation} 
\begin{equation} (x_1+x_2, y_1+y_2-x_2 v_1(x_1,z_1)+ \cdots + \frac{(-1)^n}{n!} x_2^n  v_n(x_1,z_1), z_1+z_2),  \end{equation} 
\noindent
where $v_i: \mathbb R^2 \to \mathbb R$, $i=1, 2, \cdots ,n,$  are continuous functions with $v_i(0,0)=0$ such that 
the function $v_n$ does not fulfill the identities $v_n(x,0)=c x$ and $v_n(0,z)= d z$, $c,d \in \mathbb R$, simultaneously. 
Then the group $G$ topologically generated by the left translations of $L_{v_1, \cdots ,v_n}$ is isomorphic to the group ${\mathcal F}_{n+2} \times \mathbb R$, $n \ge 1$.  
\noindent 
For $n >1$ the group $G$ coincides with the multiplication group $Mult(L_{v_1, \cdots ,v_n})$ 
of $L_{v_1, \cdots ,v_n}$ if and only if all functions $v_i$, $i=1,2, \cdots ,n$, depend only on the variable $x$ and there are continuous functions 
$s_i: \mathbb R \to \mathbb R$ with $s_i(0)=0$, $i=1, \cdots ,n$,  such that for all $x,k \in \mathbb R$ the equation 
\begin{equation} \label{equtwoside} -x (s_1(k)+v_1(k))+ \frac{x^2}{2!} (s_2(k)+ v_2(k))+ \cdots +(-1)^n \frac{x^n}{n!} (s_n(k) + v_n(k)) \nonumber \end{equation}
\begin{equation} =-k v_1(x)+ \frac{k^2}{2!} v_2(x)+ \cdots +(-1)^n \frac{k^n}{n!} v_n(x) \end{equation} 
\noindent
holds. 
\newline
\noindent
(iii) Every loop $L_{v_1, \cdots ,v_n}$ defined by (\ref{nilpotentszorzas}) such that the multiplication group of 
$L_{v_1, \cdots ,v_n}$ coincides with the group topologically generated by the left translations of $L_{v_1, \cdots ,v_n}$ 
 is the direct product of the group $\mathbb R$ and a topological loop 
$(M, \circ )$ given by the multiplication 
\begin{equation} \label{nilpotentszorzasujuj} (x_1,y_1) \circ (x_2,y_2)= (x_1+x_2, y_1+y_2-x_2 v_1(x_1)+ \cdots + \frac{(-1)^n}{n!} x_2^n  v_n(x_1)).   \nonumber \end{equation} 
\noindent
The loop $M$ has the elementary filiform Lie group ${\mathcal F}_{n+2}$, $n \ge 2$, as the multiplication group (cf. Theorem  2, \cite{figula}, p. 420). 
\end{Prop}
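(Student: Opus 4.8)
The plan is to run the scheme of the proof of Proposition~\ref{filiform4dim} with $G = {\mathcal F}_3 \times {\mathbb R}$ in place of ${\mathcal F}_4$. Write ${\bf g} = \langle e_1, e_2, e_3, e_4 \rangle$ with $[e_1,e_2]=e_3$ and all other brackets zero, fix a faithful matrix representation of $G$ on ${\mathbb R}^4$, and work out ${\rm Aut}({\bf g})$: it preserves the commutator line ${\mathbb R}e_3$ and the centre $\langle e_3, e_4 \rangle$, acts on $\langle e_1, e_2 \rangle$ modulo the centre by an arbitrary $GL_2$, and allows arbitrary central shifts of the images of $e_1, e_2$. A one-dimensional subgroup $H = \exp {\mathbb R}(\alpha e_1 + \beta e_2 + \gamma e_3 + \delta e_4)$ is non-normal iff $(\alpha,\beta) \ne (0,0)$, and by the above every such $H$ is ${\rm Aut}(G)$-equivalent to $\exp {\mathbb R}e_1$; this single case is the reason why only one multiplication appears in~(i), in contrast to the two subgroups $H_1, H_2$ of Proposition~\ref{filiform4dim}. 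Writing each element of $G$ uniquely as a product of a transversal element and an element of $H$, a continuous section $\sigma: G/H \to G$ with $\sigma(H) = 1$ is a continuous function of the three coset coordinates. Imposing sharp transitivity of $\sigma$ leads, exactly as in Proposition~\ref{filiform4dim}, to the requirement that a map $t \mapsto t - (\text{parameter})\cdot\varphi(t)$ be bijective for every value of the parameter; reading the obstruction as a linear function of the parameter forces $\varphi$ to be independent of that variable, so the section depends on only two of the coordinates through a continuous function $v$ with $v(0,0)=0$, and a direct computation gives the multiplication~(\ref{multiplileft2}).

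Next I would determine when $L_v$ is proper, equivalently when $\sigma_v(G/H)$ generates $G$: the left translations are the maps $(x,y,z)\mapsto(x+a,\,y+b-v(a,c)\,x,\,z+c)$, so $\langle\sigma_v(G/H)\rangle$ is the four-dimensional ${\mathcal F}_3\times{\mathbb R}$ precisely when the ``slopes'' $v(a,c)$ generate ${\mathbb R}$ additively, which fails exactly when $v$ is affine-linear in $x$ on $\{z=0\}$ and affine-linear in $z$ on $\{x=0\}$; combining this with the coordinate changes $v(x,z)\mapsto v(x,z-\alpha x)$, which preserve the form~(\ref{multiplileft2}), yields the stated non-degeneracy condition, the excluded $v$ giving only groups. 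For the Lie-group criterion I would compute the right translations $\rho_{(a,b,c)}\colon(x,y,z)\mapsto(x+a,\,y+b-a\,v(x,z),\,z+c)$, exhibit the abelian normal subgroup $N_\rho=\{\rho_{(0,d,e)}\}$ of the group $G_\rho$ topologically generated by them, write $G_\rho=N_\rho\Sigma$ with $\Sigma=\langle\rho_{(a,0,0)}\rangle$, note that $Mult(L_v)$ is a Lie group iff $G_\rho$ is finite-dimensional (as $G_\ell$ is always a Lie group here), and observe that conjugating $\Sigma$ by $N_\rho$ produces $z$-translates of $v$ besides the $x$-translates already present in $\Sigma$, so that $G_\rho$ sits inside the transformation group of the maps $(x,y,z)\mapsto(x+t_1,\,y+\sum\alpha_i v(x+\beta_i,z+\gamma_i),\,z+t_2)$, which by Proposition~\ref{exponentialpolynomial} is finite-dimensional iff $v$ is a finite linear combination of the functions~(\ref{sincosexp}). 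Finally, when $v=v(x)$ is a polynomial of degree $n\ge2$, computing the Lie algebra of $Mult(L_v)$ from the tangent vectors of the one-parameter subgroups of left and right translations (the device at the end of the proof of Proposition~\ref{filiform4dim}) gives, in suitable coordinates, the $(n+3)$-dimensional algebra $\langle\partial_x,\partial_y,x\partial_y,\dots,x^n\partial_y\rangle\oplus\langle\partial_z\rangle\cong{\bf f}_{n+2}\oplus{\mathbb R}$, while for any other admissible $v$ this algebra is either infinite-dimensional or not of this form. This proves~(i).

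For~(ii) I would compute the left translations of $L_{v_1,\dots,v_n}$ and see that the group $G$ they topologically generate consists exactly of the maps $(x,y,z)\mapsto(x+t,\,y+R(x),\,z+c)$ with $R$ ranging over the vector space $W$ spanned by the constants and the polynomials $P_{a,c}(x+s)=\sum_{i=1}^{n}\frac{(-1)^{i-1}}{i!}(x+s)^i v_i(a,c)$; since $v_n$ does not satisfy the two linear identities it is not identically zero, so some $P_{a_0,c_0}$ has degree exactly $n$, its translates span all polynomials of degree $\le n$, and hence $W$ equals that $(n+1)$-dimensional space. Under the law $(t_1,R_1,c_1)(t_2,R_2,c_2)=(t_1+t_2,\,R_1(\cdot+t_2)+R_2,\,c_1+c_2)$ the $c$-line is a central direct ${\mathbb R}$-factor, while the generator of the $t$-line acts on $W\cong{\mathbb R}[x]_{\le n}$ by $R\mapsto\exp(d/dx)R$, a single regular nilpotent Jordan block, so $G\cong{\mathcal F}_{n+2}\times{\mathbb R}$; for $n=1$ this recovers ${\mathcal F}_3\times{\mathbb R}$. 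For the equality $G=Mult(L_{v_1,\dots,v_n})$ I would compute $\rho_{(a,b,c)}\colon(x,y,z)\mapsto(x+a,\,y+b+\sum_{i=1}^n\frac{(-1)^i a^i}{i!}v_i(x,z),\,z+c)$, which lies in $G$ iff for every $a$ the expression $b+\sum\frac{(-1)^i a^i}{i!}v_i(x,z)$ is independent of $z$ and a polynomial in $x$ of degree $\le n$; letting $a$ vary (a polynomial identity in $a$ with linearly independent powers) this is equivalent to all $v_i$ depending only on $x$ and being polynomials of degree $\le n$. I would then check that, for $v_i=v_i(x)$, solvability of~(\ref{equtwoside}) is equivalent to the same condition: matching the coefficients of $x^j$ on both sides expresses $s_j(k)$ explicitly as a polynomial in $k$ vanishing at $0$ when the $v_i$ are such polynomials, while conversely a Vandermonde argument in $k$ forces each $v_i$ to be a polynomial of degree $\le n$ whenever the right-hand side of~(\ref{equtwoside}) is a polynomial in $x$ of degree $\le n$ for every $k$. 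Part~(iii) is then immediate: if $Mult=G$, then by~(ii) every $v_i=v_i(x)$, so the $z$-coordinate is untouched by the product in~(\ref{nilpotentszorzas}) and $L_{v_1,\dots,v_n}\cong{\mathbb R}\times(M,\circ)$ with $(M,\circ)$ given by~(\ref{nilpotentszorzasujuj}); this $(M,\circ)$ is a two-dimensional loop of the type classified in \cite{figula}, and Theorem~2 there identifies its multiplication group as ${\mathcal F}_{n+2}$.

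The step I expect to cost the most work is the properness/generation condition in~(i): one must carry out the conjugation and ``slope'' computations carefully enough to single out precisely the family $\{v: v(x,0)=cx,\ v(0,z)=dz\}$, and handle the redundancy of the parametrisation $v\mapsto L_v$ via the coordinate changes $v(x,z)\mapsto v(x,z-\alpha x)$. The other delicate point is the bookkeeping in~(ii): identifying the left-translation group exactly as ${\mathcal F}_{n+2}\times{\mathbb R}$ (recognising the translation action on $W$ as one nilpotent Jordan block) and reconciling the transparent condition ``all $v_i$ are polynomials of degree $\le n$ in $x$'' with the functional equation~(\ref{equtwoside}) in which the statement is phrased.
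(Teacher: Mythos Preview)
Your overall architecture matches the paper's: normalise $H$ via $\mathrm{Aut}(G)$, write the section, kill one variable by the linear-obstruction trick, read off (\ref{multiplileft2}), handle properness, then analyse right translations for the Lie-group criterion. The substantive divergence is in how you pin down when $Mult(L_v)\cong\mathcal F_{n+2}\times\mathbb R$ in (i) and when $G=Mult(L_{v_1,\dots,v_n})$ in (ii). The paper does not compute the Lie algebra of $Mult$ directly. For the last claim in (i) it invokes Lemma~\ref{kepkalemma}: take an arbitrary left transversal $T$ to $S=\{g(0,t_1,\dots,t_n,0,0)\}$ in $K=\mathcal F_{n+2}\times\mathbb R$, impose $a^{-1}b^{-1}ab\in S$ for $a\in T$, $b\in\Lambda_v$, and obtain the functional equation $k\,v(x,z)=\sum_{i}(-1)^{i+1}\tfrac{x^i}{i!}h_i(k,l,m)$, which forces $v(x,z)=v(x)$ and $v$ a polynomial of degree $n$ in one stroke. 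For (ii) the paper uses Proposition~18.16 of \cite{loops} (the map $f(y):x\mapsto y\lambda_x\lambda_y^{-1}$ must land in $H$), which produces (\ref{equtwoside}) directly without ever looking at right translations. Your route via ``each $\rho_{(a,b,c)}$ lies in $G$'' is a legitimate alternative in (ii) and yields the more explicit condition ``every $v_i$ is a polynomial in $x$ alone of degree $\le n$''; you then correctly reconcile this with (\ref{equtwoside}) by a Vandermonde argument. What each approach buys: the transversal/Prop.~18.16 machinery packages both directions into a single identity, while your direct approach is more concrete but obliges you to argue the ``only if'' part separately.

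That is where your sketch has an actual gap. In (i), for the claim that $Mult(L_v)\cong\mathcal F_{n+2}\times\mathbb R$ \emph{only if} $v=v(x)$ is a degree-$n$ polynomial, you assert that for any other admissible $v$ the Lie algebra is ``either infinite-dimensional or not of this form''. This needs proof: when $v$ genuinely depends on $z$ but is still of the exponential-polynomial type, $Mult(L_v)$ is a finite-dimensional nilpotent Lie group, and you must rule out that it is accidentally isomorphic to some $\mathcal F_{n+2}\times\mathbb R$ (e.g.\ by showing its centre is one-dimensional rather than two-dimensional). The paper's transversal equation disposes of this instantly, since its right-hand side is independent of $z$ and its left-hand side independent of $(l,m)$, forcing $v=v(x)$ and $h_i=h_i(k)$ before any structure computation. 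I would recommend borrowing that device for the ``only if'' direction; the rest of your plan goes through. Two minor remarks: your normalisation $H=\exp\mathbb R e_1$ versus the paper's $H=\exp\mathbb R e_2$ is cosmetic (swap $e_1\leftrightarrow e_2$, $e_3\mapsto -e_3$); and your properness criterion via ``slopes $v(a,c)$ generate $\mathbb R$ additively'' is not the same as the paper's argument, which instead exhibits the explicit subsets $F=\{g(x,v(x,0),0,0)\}$ and $K=\{g(0,v(0,z),y,z)\}$ of $\sigma(G/H)$ and checks directly when $\langle F\rangle$ together with $K$ fills out $G$.
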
 
\begin{proof} 
The  Lie algebra ${\bf g}=\langle e_1, e_2, \cdots , e_{n+2}, e_{n+3} \rangle $, $n \ge 1$, of the Lie group 
$G={\mathcal F}_{n+2} \times \mathbb R$ is isomorphic to the Lie algebra of all matrices of the form 
{\small \begin{equation} \left \{ \begin{pmatrix} 
0 & a_1 & a_2 & \dots & a_{n-1} & a_n & b & d\\
0 & 0 & 0 &  \dots & 0 & 0 & -c & 0\\
0 & -c & 0 & \dots & 0 & 0 & 0 & 0 \\
0 & 0  & -c & \dots & 0& 0 & 0 & 0 \\
\hdotsfor{8} \\
0 & 0  & 0 & \dots & -c & 0 & 0 & 0 \\
0 & 0  & 0 & \dots & 0 & 0 & 0 & 0 \\
0 & 0  & 0 & \dots & 0 & 0 & 0 & 0 \end{pmatrix}; \ a_i,b,c,d \in \mathbb R, i=1,2,\cdots , n  \right \}. \nonumber \end{equation} }
\noindent
Hence we can represent the elements of the Lie group $G$ as the matrices 
{\small \begin{equation}  g(c,a_1,a_2, \cdots , a_{n-1}, a_n, b, d)= \nonumber \end{equation}
\begin{equation} \label{pmatrixujuj} \begin{pmatrix} 
1 & a_1 & a_2  & \dots & a_{n-1} & a_n & b & d \\
0 & 1 & 0  & 0 & \dots  & 0 & -c & 0 \\
0 & -c & 1 & 0 & \dots  & 0 & \frac{c^2}{2!} & 0 \\
0 & \frac{c^2}{2!}  & -c & 1 & \dots & 0 & - \frac{c^3}{3!} & 0 \\
\hdotsfor{8} \\
0 & \frac{(-1)^{n-1}}{(n-1)!} c^{n-1}  & \frac{(-1)^{n-2}}{(n-2)!}  c^{n-2} & \dots  & \frac{(-1)}{(n-(n-1))!} c^{1}  & 1 & \frac{(-1)^n}{n!} c^n & 0 \\
0 & 0  & 0 & \dots  &  0 & 0 & 1 & 0 \\
0 & 0  & 0 & \dots  &  0 & 0 & 0 & 1 \end{pmatrix},  \end{equation} }
with $a_i, b, c, d \in \mathbb R, i=1,2, \cdots , n$. 
\newline
\noindent
First we treat the group $G={\mathcal F}_{3} \times \mathbb R$. 
The Lie algebra ${\bf g}$ of $G$ is given by the basis 
$\{ e_1, e_2, e_3, e_4 \}$ with $[e_1, e_2]= e_3$ and $[e_4, e_i]=0$, $(i=1,2,3)$. 
The subgroup $\exp t e_3$, $t \in \mathbb R$, is the commutator subgroup of $G$, the subgroup 
$\exp (t e_3+ s e_4)$, $t, s \in \mathbb R$, is the centre of $G$. 
Hence each  automorphism $\varphi $  of ${\bf g}$ has the form 
\[ \varphi(e_1)= a_1 e_1 + a_2 e_2 + a_3 e_3 + a_4 e_4, \ \ \varphi(e_2)= b_1 e_1+ b_2 e_2 + b_3 e_3 + b_4 e_4, \ \ \]
\[ \varphi(e_3)= (a_1 b_2-a_2 b_1) e_3, \ \ \varphi(e_4)= c_1 e_3 + c_2 e_4, \]
with  $a_1 b_2-a_2 b_1 \neq 0$, $c_2 \neq 0$, $a_3, a_4, b_3, b_4, c_1 \in \mathbb R$.  
\newline
\noindent    
Let $H$ be a subgroup of $G$ which is isomorphic to $\mathbb R$ and which is not normal in $G$. Then $H$ has the form  
$H= \exp t(\alpha e_1 + \beta e_2 + \gamma e_3 + \delta e_4)$, $t \in \mathbb R$, with  
$\alpha ^2 + \beta ^2 =1$. We can change $H$ by an automorphism of $G$ such that $H$ has the form 
$H= \exp t e_2=\{ g(0,v,0,0); v \in \mathbb R \}$. 
Since all elements of $G$ have a unique decomposition $g(x,0,y,z) g(0,v,0,0)$ 
the continuous function $v(x,y,z)$ determines a continuous section $\sigma : G/H \to G$ 
defined  by $g(x,0,y,z)H \mapsto g(x,v(x,y,z),y,z)$,  
\noindent 
where $H=\{ g(0,t,0,0); t \in \mathbb R \}$. The set $\sigma (G/H)$ acts sharply transitively on the factor space $G/H$ precisely if for all triples  
$(x_1,y_1,z_1)$, $(x_2,y_2,z_2) \in \mathbb R^3$ the equation 
\begin{equation} \label{equfil1uj} g(x,v(x,y,z),y,z) g(x_1,0,y_1,z_1)= g(x_2,0,y_2,z_2) g(0,t,0,0)  \end{equation}
has a unique solution $(x,y,z) \in \mathbb R^3$, where $g(0,t,0,0)$ is a suitable element of $H$. 
\newline
From (\ref{equfil1uj}) we obtain the equations $x=x_2-x_1$, $z=z_2- z_1$, $t=v(x,y,z)$, 
$0 = y+y_1-y_2-x_1 v(x,y,z)$.  
\noindent 
Hence equation (\ref{equfil1uj}) has a unique solution if and only if 
for every $x_0=x_2-x_1$, $z_0=z_2-z_1$ and $x_1 \in \mathbb R$ the function 
$f:y \mapsto y-x_1 v(x_0,y,z_0): \mathbb R \to \mathbb R$ is a bijective mapping. 
Let be $\psi _1 < \psi _2 \in \mathbb R$. Then $f(\psi _1) \neq f(\psi _2)$ and we may assume that  
$f(\psi _1) < f(\psi _2)$. 
We consider the inequality 
\begin{equation}  0 < f(\psi _2)-f(\psi _1) = \psi _2 - \psi _1 - x_1( v(x_0, \psi _2, z_0)- v(x_0, \psi _1, z_0))  
\nonumber  \end{equation}  
as a linear function of $x_1 \in \mathbb R$. If $v(x_0, \psi _2, z_0) \neq v(x_0, \psi _1, z_0)$, then there exists a 
$x_1 \in \mathbb R$ such that $f(\psi _2)-f(\psi _1)=0$ which is a contradiction. Hence the function $v(x,y,z)$ does not depend on the variable $y$. In this case $f$ is a monotone function and every continuous function $v(x,z)$ with $v(0,0)=0$ 
determines a loop multiplication.   
\newline
\noindent
The multiplication $(x_1,y_1,z_1) \ast (x_2,y_2,z_2)$ of the loop $L$ in the coordinate system 
$(x,y,z) \mapsto g(x, 0, y, z)H$ is determined if we apply $\sigma (g(x_1,0,y_1,z_1)H)=$ $g(x_1,v(x_1,z_1),y_1,z_1)$ 
to the left coset $g(x_2,0,y_2,z_2)H$ and find in the image coset the element of $G$ which lies in the set 
$\{ g(x,0,y,z)H; \ x,y,z \in \mathbb R \}$. A direct computation gives the multiplication (\ref{multiplileft2}) in the assertion (i). 

\noindent
This loop is proper precisely if $\sigma (G/H)=\{ g(x,v(x,z),y,z); \ x,y,z \in \mathbb R \}$ 
generates the group $G$.    
The set $\sigma (G/H)$ contains the subset $F=\{ g(x,v(x,0),0,0); \ x \in \mathbb R \}$ and 
the subgroup  $K=\{ g(0,v(0,z),y,z); \ y,z \in \mathbb R \}$ of $G$. 
The group $\langle F \rangle $ topologically generated by the set $F$ and the group $K$ generate $G$ 
if the mapping assigning to the first component of any element of $\langle F \rangle $ its second  component is not a homomorphism 
(Lemma 8 in \cite{figula}, p. 426). 
This occurs if and only if the function $v(x,0)$ is non-linear. Now we assume that $v(x,0)= c x$, $c \in \mathbb R$. 
Then the set $\sigma (G/H)$ does not generate $G$ if and only if the function $v(0,z)$ is linear. Hence the set 
$\sigma (G/H)$ generates $G$ if the function $v(x,z)$ 
does not satisfy the identities $v(x,0)= c x$ and $v(0,z)= d z$, $c,d \in \mathbb R$, simultaneously.

\noindent 
The right translation $\rho _{(a,b,c)}$ of the loop $L_v$ is the map 
\begin{equation} \label{right1equharom} \rho _{(a,b,c)}: (x,y,z) \mapsto (x,y,z) \ast (a,b,c)= (x+a, y+b -a v(x,z), z+c).  \nonumber \end{equation} 
Its inverse map $\rho _{(a,b,c)}^{-1}$ is given by 
\begin{equation} \label{right2equvketto} \rho _{(a,b,c)}^{-1}: (x,y,z) \mapsto (x-a, y-b+a v(x-a,z-c), z-c). \nonumber \end{equation} 
Since $\rho_{(0,d_1,e_1)} \rho_{(0,d_2,e_2)}= \rho_{(0,d_1+d_2,e_1+e_2)}$ and one has  
\begin{equation} \label{right3equvharom} \rho _{(a,b,c)} \rho_{(0,d,e)} \rho _{(a,b,c)}^{-1} = 
\rho _{(0,d+ a (v(x-a,z-c)- v(x-a,z-c+e)),e)}  \nonumber \end{equation}
the group $G_{\rho }$ generated by the right translations of $L_v$ has a 
normal subgroup  $N_{\rho }=\{ \rho_{(0,d,e)}; d,e \in \mathbb R \}$ isomorphic to $\mathbb R^2$. As 
 $\rho _{(a,b,c)} = \rho _{(0,b,c)} \rho _{(a,0,0)}$ one has $G_{\rho }= N_{\rho } \Sigma $, 
where $\Sigma $ is the group generated by the set $\{ \rho _{(a,0,0)}; a \in \mathbb R \}$. The Lie group $G_{\rho }$ and hence the multiplication group $Mult(L_v)$ has finite dimension if and only if the group $\Sigma $ is a finite dimensional 
Lie group. Since 
$\rho _{(a_2,0,0)} \rho _{(a_1,0,0)}: (x,y,z) \mapsto (x+a_1+a_2,y-a_1 v(x,z)-a_2 v(x+a_1,z),z)$ the group $\Sigma $ is 
the transformation group consisting of the maps 
\begin{equation} \label{right3equvnegy} \gamma _{(t,\alpha _i, \beta _i)}: (x,y,z) \mapsto 
(x+t, y+ \sum \alpha _i v(x+ \beta _i,z), z), \nonumber \end{equation}
where $t, \alpha _i, \beta _i \in \mathbb R$. By Proposition \ref{exponentialpolynomial} the group $\Sigma $ is a finite dimensional Lie group precisely if the function $v(x,z)$ is a finite linear combination of functions given by (\ref{sincosexp}).

\noindent 
Now we prove that the function $v(x,z)=v(x)$ is a polynomial of degree $n$ precisely if the multiplication group $Mult(L_v)$ of the loop $L_v$ is the direct product ${\mathcal F}_{n+2} \times \mathbb R$ with $n \ge 2$. 
Let $K$ be the group of matrices (\ref{pmatrixujuj}) with $n \ge 1$ and let $S$ be the subgroup 
$\{ g(0,t_1,t_2,\cdots ,t_n,0,0); \ t_i \in \mathbb R, i=1,2, \cdots ,n \}$. 
Then the group $K$ is isomorphic to ${\mathcal F}_{n+2} \times \mathbb R$, $n \ge 1$.      
The set $\Lambda _{v}=\{ \lambda _{(x,y,z)}; \ (x,y,z) \in L_{v} \}$ 
of all left translations of the loop $L_{v}$ defined by (\ref{multiplileft2}) in the group $K$ is   
$\Lambda _{v}=\{ g(x,v(x,z),0, \cdots , 0, y,z); \ x,y,z \in \mathbb R \}$.   
An arbitrary transversal $T$ of the group $S$ in the group $K$ has the form 
\[ T=\{ g(k,h_1(k,l,m), \cdots ,h_n(k,l,m),l, m);\  k,l,m \in \mathbb R \}, \]
where $h_j: \mathbb R^3 \to \mathbb R$, $j=1, \cdots ,n,$ are continuous  functions with $h_j(0,0,0)=0$. 
According to Lemma \ref{kepkalemma}, the group $K$ is isomorphic to the multiplication group $Mult(L_{v})$ of the loop $L_{v}$ precisely if the set 
$\{a^{-1} b^{-1} a b;\ a \in T, b \in \Lambda _{v} \}$ is contained in $S$   
and the set $\{ \Lambda _{v}, T \}$ generates the group $K$.  
The products $a^{-1} b^{-1} a b$ with $a \in T$ and 
$b \in \Lambda _{v}$ are elements of $S$ if and only if the equation 
\begin{equation} \label{equequ1} k v(x,z)= \end{equation}
\begin{equation} (-1)^{n+1} \frac{x^n}{n!} h_n(k,l,m)+(-1)^{n} \frac{x^{n-1}}{(n-1)!} h_{n-1}(k,l,m)+ \cdots +(-1)^2 x h_1(k,l,m) \nonumber  \end{equation} 
\noindent
holds for all $x,z,k,l,m \in \mathbb R$. 
Since the right hand side of (\ref{equequ1}) does not depend on the variable $z$ and the left hand side does not depend on the variables $l$ and $m$ we have $v(x,z)=v(x)$ and $h_i(k,l,m)=h_i(k)$ for all $1 \le i \le n$. Hence for any $k \neq 0$ identity (\ref{equequ1}) reduces to 
\begin{equation} \label{equequ2} v(x)=(-1)^{n+1} \frac{x^n}{n!} \frac{h_n(k)}{k}+(-1)^{n} \frac{x^{n-1}}{(n-1)!} \frac{h_{n-1}(k)}{k}+ \cdots 
+(-1)^2 x \frac{h_1(k)}{k}. \nonumber \end{equation}  
If $n=1$ for any $x \neq 0$, $k \neq 0$ we obtain 
$\frac{v(x)}{x}= \frac{h_1(k)}{k}=c= \hbox{const} (\neq 0)$, and since $v(0)=0$ this implies $v(x)= c x$. In this case the real function $v$ is linear, therefore $\Lambda _{v}$ does not generate $K$ and the group $K={\mathcal F}_3 \times \mathbb R$ cannot be the multiplication group of the loop $L_{v}$.  
 
Now let $n \ge 2$. 
Since the polynomials $x, x^2, \cdots , x^n$ are linearly independent, the function $v: \mathbb R \to \mathbb R$ depends only on the variable $x$ and the functions $h_i: \mathbb R \to \mathbb R$ are independent of $x$, the last equation  is satisfied if and only if 
$h_i(k)=a_i k$ with $a_i \in \mathbb R$ for all $1 \le i \le n$. By Lemma 8 in \cite{figula}, p. 426, 
the set $\{ \Lambda _{v}, T \}$ generates the group $K$ if and only if $a_n$ is different from $0$, since then the Lie algebra of the non-commutative group generated by the set $\{ \Lambda _{v}, T \}$ contains elements of the shape $e_2+s$ with 
$s \in \langle e_3, e_4, \cdots , e_{n+2} \rangle $.  
Hence the function $v(x)$ is a polynomial of degree $n$ and the assertion (i) is proved.

\medskip
\noindent 
Now we deal with the assertion (ii). Let $G$ be the group ${\mathcal F}_{n+2} \times \mathbb R$, $ n \ge 1$, and let 
$\sigma : G/H \to G$ be the continuous section 
\begin{equation} \label{sectionujmas} g(x,0, \cdots ,0,y,z)H \mapsto g(x,v_1(x,z),v_2(x,z), \cdots ,v_n(x,z),y,z),  \end{equation} 
\noindent 
where $H=\{ g(0,t_1,t_2, \cdots ,t_n,0,0); t_i \in \mathbb R, i=1, \cdots ,n \}$ and $v_i:\mathbb R^2 \to \mathbb R$ are continuous functions with $v_i(0,0)=0$ for all $i=1, \cdots ,n$.  
The set $\sigma (G/H)$ acts sharply transitively on the factor space $G/H$ because of for all  
$(x_1,y_1,z_1)$, $(x_2,y_2,z_2) \in \mathbb R^3$ the equation 
\begin{equation} \label{equfil1} g(x,v_1(x,z),v_2(x,z), \cdots ,v_n(x,z),y,z) g(x_1,0, \cdots ,0,y_1,z_1)= \nonumber \end{equation} 
\begin{equation} g(x_2,0, \cdots ,0,y_2,z_2) g(0,t_1,t_2, \cdots ,t_n,0,0) \nonumber  \end{equation}
has a unique solution $x = x_2-x_1$, $z=z_2- z_1$, $y=y_2-y_1+x_1 v_1(x,z)+ \cdots +\frac{(-1)^{n+1}}{n!} x_1^n v_n(x,z)$ 
with the real numbers $t_i = \sum \limits_{k=i}^n (-1)^{k-i}  \frac{x_1^{k-i}}{(k-i)!} v_k(x,z)$ for all $i=1,2, \cdots ,n$.   
\noindent 
Hence the section $\sigma $ given by (\ref{sectionujmas}) determines a $3$-dimensional topological loop $L$. A direct calculation yields that the multiplication of the loop $L$ in the coordinate system 
$(x,y,z) \mapsto g(x, 0, \cdots ,0, y, z)H$ is given by (\ref{nilpotentszorzas}) in the assertion (ii).  
\newline 
\noindent 
The group topologically generated by the left translations of the loop $L$ is isomorphic to 
$G={\mathcal F}_{n+2} \times \mathbb R$ if and only if the set 
\[ \sigma (G/H)=\{ g(x,v_1(x,z),v_2(x,z), \cdots, v_n(x,z),y,z); \ x,y,z \in \mathbb R \}  \]
generates the whole group $G$.    
The set $\sigma (G/H)$ contains the subset 
\[ F=\{ g(x,v_1(x,0),v_2(x,0), \cdots ,v_n(x,0),0,0); \ x \in \mathbb R \} \] and 
the subgroup  $K=\{ g(0,v_1(0,z),v_2(0,z), \cdots ,v_n(0,z),y,z); \ y,z \in \mathbb R \}$ of $G$. 
The group $\langle F \rangle $ topologically generated by the set $F$ and the group $K$ generate $G$ 
if $\langle F \rangle $ is a non-commutative subgroup of codimension $2$ in $G$. Using Lemma 8 in \cite{figula}, p. 426, this is the case precisely if the mapping assigning to the first component of any element of $\langle F \rangle $ its $(n+1)$-th component is not a homomorphism. 
This occurs if and only if the function $v_n(x,0)$ is non-linear. Now we assume that $v_n(x,0)= c x$, $c \in \mathbb R$. Then the set $\sigma (G/H)$ 
does not generate $G$ if and only if the function $v_n(0,z)$ is linear. Hence the set $\sigma (G/H)$ generates $G$ if the function $v_n(x,z)$ 
does not satisfy the identities $v_n(x,0)= c x$ and $v_n(0,z)= d z$, $c,d \in \mathbb R$, simultaneously.   

Now let $n>1$. By Proposition 18.16 in \cite{loops}, p. 246, the Lie group $G={\mathcal F}_{n+2} \times \mathbb R$  is the group topologically generated by all translations of the loop $L$  given by the multiplication (\ref{nilpotentszorzas})  
if and only if for every $y \in L$ the map $f(y): x \mapsto y \lambda _x \lambda _y^{-1}: L \to L$ is an element of 
$H=\{ g(0,t_1, \cdots ,t_n,0,0); t_i \in \mathbb R, i=1, \cdots ,n \}$.  
This is equivalent to the condition that the mapping 
{ \begin{equation} g(x,0, \cdots ,0,y,z) H \mapsto [g(k,v_1(k,m),v_2(k,m), \cdots ,v_n(k,m),l,m)]^{-1} \cdot \nonumber \end{equation}
\begin{equation}  [g(x,v_1(x,z),v_2(x,z), \cdots ,v_n(x,z),y,z)] g(k,0, \cdots ,0,l,m) H \nonumber \end{equation} }
\noindent
has the form 
{ \begin{equation} g(x,0, \cdots ,0,y,z) H \mapsto  \nonumber \end{equation} 
\begin{equation} g(0,s_1(k,l,m), \cdots ,s_n(k,l,m),0,0) g(x,0, \cdots ,0,y,z) H \nonumber \end{equation} }
with suitable functions $s_i(k,l,m)$, $1 \le i \le n$. This gives the equation 
{ \begin{equation}  
g(x,v_1(x,z), \cdots ,v_n(x,z),y,z)g(k,t_1, \cdots ,t_n,l,m) =   \nonumber  \end{equation} 
\begin{equation} \label{equnilpo1} g(k,v_1(k,m), \cdots ,v_n(k,m),l,m)g(0,s_1(k,l,m), \cdots ,s_n(k,l,m),0,0) \cdot \nonumber \end{equation}
\begin{equation} g(x,0, \cdots ,0,y,z)  \end{equation} } 
for a suitable element $g(0,t_1, \cdots ,t_n,0,0) \in H$. 
Equation (\ref{equnilpo1}) yields 
\noindent
\begin{equation} t_i = \sum \limits_{j=i}^n (-1)^{j-i} \frac{1}{(j-i)!} [x^{j-i}(s_j(k,l,m)+ v_j(k,m))- k^{j-i} v_j(x,z)]  \nonumber \end{equation}  
for  all $i=1,2, \cdots ,n$ and 
\begin{equation}  -x (s_1(k,l,m)+v_1(k,m)) + \cdots + \frac{(-1)^{n}}{n!} x^{n} (s_n(k,l,m)+ v_n(k,m))= \nonumber \end{equation} 
\begin{equation} \label{equequnil} - k v_1(x,z)+ \frac{k^2}{2!} v_2(x,z)+ \cdots + \frac{(-1)^n}{n!} k^n v_n(x,z). \end{equation} 
\noindent
Since the right hand side of equation (\ref{equequnil}) does not depend on the variables $l$ and $m$  and the left hand side does not depend on the variable $z$ we obtain $v_i(x,z)=v_i(x)$, $s_i(k,l,m)= s_i(k)$ for all $i=1, \cdots ,n$.  
Hence equation (\ref{equequnil})  is satisfied if and only if there are continuous functions $s_i:\mathbb R \to \mathbb R$, $i=1, \cdots ,n$, depending on the variable $k$ such that for all $x,k \in \mathbb R$  equation (\ref{equtwoside}) holds. 
Putting $k=0$ into (\ref{equtwoside}) we have $-x s_1(0)+ \frac{x^2}{2!} s_2(0)+ \cdots +(-1)^n \frac{x^n}{n!} s_n(0)=0$. 
As the polynomials $x, x^2, \cdots , x^n$ are linearly independent this equation yields that $s_i(0)=0$ for all 
$1 \le i \le n$.  This proves the assertion (ii).

\medskip
\noindent
The assertion (iii) follows from the fact that if the multiplication group as well as the group topologically generated by the left translations of a loop $L_{v_1, \cdots ,v_n}$ is isomorphic to the group 
${\mathcal F}_{n+2} \times \mathbb R$, $n \ge 2$, then the functions $v_i(x,z)$, $i=1, \cdots, n$ depend only on the variable $x$ (see assertion (ii)).  
\end{proof} 

\medskip 
\noindent 
If we choose for all $1 \le i \le n$ the functions $s_i(k)=0$, then the equation (\ref{equtwoside}) in Proposition \ref{directproduct} is symmetric in the variables $x$ and $k$. Hence the multiplication (\ref{nilpotentszorzas}) with $v_i(x,z)=v_i(x)= a_i x^i$, $a_i \in \mathbb R$, $a_n \neq 0$, gives examples for $3$-dimensional topological loops $L$ such that the group topologically generated by all left translations is isomorphic to the group 
${\mathcal F}_{n+2} \times \mathbb R$ with $n >1$ and coincides with the multiplication group of $L$.

\bigskip
\noindent
Now we deal with the $5$-dimensional indecomposable simply connected nilpotent Lie group $G$ such that its $2$-dimensional centre coincides with the commutator subgroup. This group is the group topologically generated by the left translations of $3$-dimensional topological loops. We classify these loops $L$ in the case that the stabilizer $H$ of the identity $e \in L$ is a $2$-dimensional abelian subgroup of $G$. 
But the group $G$ cannot be the multiplication group of $3$-dimensional connected topological loops.    

\begin{Prop} \label{5dimleftgroup} Let $G$ be the $5$-dimensional indecomposable simply connected nilpotent Lie group such that its $2$-dimensional centre coincides with its  commutator subgroup. Up to automorphisms of $G$ each $2$-dimensional abelian subgroup $H$ of $G$ which does not contain any non-trivial normal subgroup of $G$ has the form 
$H=\{ g(0,k,0,l,0); k,l \in \mathbb R \}$. Every $3$-dimensional connected simply connected topological loop $L$ having $G$ 
as the group topologically generated by the left translations of $L$ and $H$ as the stabilizer of $e \in L$ in $G$ is given by the multiplication 
\begin{equation} \label{nilpotentszorzasmas} (x_1,y_1,z_1) \ast (x_2,y_2,z_2)= \nonumber \end{equation} 
\begin{equation} (x_1+x_2, y_1+y_2-x_2 v_1(x_1,y_1,z_1), z_1+z_2-x_2 v_2(x_1,y_1,z_1)), \end{equation} 
where $v_i: \mathbb R^3 \to \mathbb R$, $i=1,2$, are continuous functions with $v_i(0,0,0)=0$ such that for all $x_1, x_2, y_1, y_2, z_1, z_2 \in \mathbb R$ the equations   
\begin{equation} \label{nil5dim2} 0= y+ y_1- y_2 - \frac{1}{2} (x_2+x_1) v_1(x_2-x_1, y, z)  \end{equation}
\begin{equation} \label{nil5dim3} 0= z+ z_1- z_2 - x_1 v_2(x_2-x_1, y, z)  \end{equation}
have a unique solution $(y, z) \in \mathbb R^2$ and none of the functions $v_i(x,y,z)$ satisfies the identities 
$v_i(x,0,0)=a x$ and $v_i(0,y,z)= b y+ c z$, $a,b,c \in \mathbb R$, simultaneously. 
\end{Prop}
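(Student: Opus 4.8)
The plan is to argue as in Propositions~\ref{filiform4dim} and~\ref{directproduct}: make $G$, its centre and its automorphisms explicit, classify the admissible stabilizers $H$, parametrize the continuous sharply transitive sections $\sigma\colon G/H\to G$, compute the induced loop multiplication, and finally translate the condition ``$\sigma(G/H)$ generates $G$'' into the stated conditions on $v_1,v_2$. I would begin by recording the structure of $G$. Its Lie algebra ${\bf g}$ has a basis $\{e_1,\dots,e_5\}$ with $[e_1,e_2]=e_3$, $[e_1,e_4]=e_5$ and all other brackets zero, so that $\langle e_3,e_5\rangle$ is simultaneously the centre $Z$ and the commutator ideal ${\bf g}'$. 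This is the unique such algebra: a $5$-dimensional nilpotent Lie algebra with $Z={\bf g}'$ of dimension $2$ is a central extension of $\mathbb R^3$ by $\mathbb R^2$ given by a surjective alternating map $\Lambda^2\mathbb R^3\to\mathbb R^2$, any two of which are equivalent under $GL_3\times GL_2$, and it is automatically indecomposable, because the only $4$-dimensional nilpotent Lie algebra with $2$-dimensional commutator is the Lie algebra of ${\mathcal F}_4$, whose centre has dimension $1$, so no direct summand $\mathbb R$ can be split off. Choosing a faithful matrix representation one obtains a multiplication $g(x_1,x_2,x_3,x_4,x_5)g(y_1,\dots,y_5)=g(x_1+y_1,x_2+y_2,x_3+y_3+p,x_4+y_4,x_5+y_5+q)$ with $p,q$ quadratic in the coordinates, the centre $Z=G'=\{g(0,0,x_3,0,x_5)\}$, and an automorphism group consisting of the maps $\varphi$ which fix $\langle e_3,e_5\rangle$ as a subspace, induce on ${\bf g}/Z$ a linear map preserving the plane $\langle\bar e_2,\bar e_4\rangle$, and may otherwise add arbitrary central vectors to $e_1,e_2,e_4$; in particular every linear transformation of $\langle e_2,e_4\rangle$ and every central translation $e_2\mapsto e_2+z_1$, $e_4\mapsto e_4+z_2$ with $z_i\in Z$ extends to an automorphism of ${\bf g}$.

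For the classification of $H$ I would first note that, since $G$ is connected and nilpotent, every non-trivial connected normal subgroup meets $Z$ non-trivially and every discrete normal subgroup of $G$ is central; hence the $2$-dimensional abelian subgroup $H=\exp{\bf h}$ contains no non-trivial normal subgroup of $G$ if and only if ${\bf h}\cap\langle e_3,e_5\rangle=0$. In that case ${\bf h}$ maps isomorphically onto a $2$-dimensional subspace $\bar{\bf h}$ of ${\bf g}/Z$, and the abelianness of ${\bf h}$ means that the bracket form induced on ${\bf g}/Z$ vanishes on $\Lambda^2\bar{\bf h}$; thus the line $\Lambda^2\bar{\bf h}$ lies in the $1$-dimensional kernel $\mathbb R(\bar e_2\wedge\bar e_4)$ of that form, which forces $\bar{\bf h}=\langle\bar e_2,\bar e_4\rangle$. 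Therefore ${\bf h}\subseteq\langle e_2,e_4\rangle\oplus\langle e_3,e_5\rangle$, and applying a suitable automorphism of ${\bf g}$ (a linear transformation of $\langle e_2,e_4\rangle$ together with central translations of $e_2$ and $e_4$) we may assume ${\bf h}=\langle e_2,e_4\rangle$, that is $H=\{g(0,k,0,l,0);\ k,l\in\mathbb R\}$.

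With $H$ fixed, every $g\in G$ has a unique decomposition $g(x_1,0,x_3,0,x_5)\,g(0,x_2,0,x_4,0)$, so a continuous section $\sigma\colon G/H\to G$ with $\sigma(H)=1$ is the same datum as a pair of continuous functions $v_1,v_2\colon\mathbb R^3\to\mathbb R$ with $v_i(0,0,0)=0$, namely $\sigma\colon g(x,0,y,0,z)H\mapsto g(x,0,y,0,z)\,g(0,v_1(x,y,z),0,v_2(x,y,z),0)$. Requiring $\sigma$ to be sharply transitive means that for all triples $(x_1,y_1,z_1),(x_2,y_2,z_2)\in\mathbb R^3$ there is a unique $(x,y,z)$ with $\sigma(x,y,z)\,g(x_1,0,y_1,0,z_1)\in g(x_2,0,y_2,0,z_2)H$; expanding the group multiplication gives $x=x_2-x_1$ at once and reduces the remaining two coordinate equations to (\ref{nil5dim2}) and (\ref{nil5dim3}), so that sharp transitivity is equivalent to the unique solvability of this system in $(y,z)$. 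Applying $\sigma(x_1,y_1,z_1)$ to the coset $g(x_2,0,y_2,0,z_2)H$ and reading off the representative lying in $\{g(x,0,y,0,z)H\}$ then yields, by a direct computation, the multiplication (\ref{nilpotentszorzasmas}).

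It remains to see that $G$ is the group topologically generated by the left translations of the resulting loop $L$ — equivalently that $L$ is proper — exactly when $\sigma(G/H)$ generates $G$, and to express this through $v_1,v_2$. The set $\sigma(G/H)$ contains the curve $F=\{g(x,v_1(x,0,0),0,v_2(x,0,0),0);\ x\in\mathbb R\}$ and the slice $\{g(0,v_1(0,y,z),y,v_2(0,y,z),z);\ y,z\in\mathbb R\}$, and using the two Heisenberg ideals $\langle e_1,e_2,e_3\rangle$ and $\langle e_1,e_4,e_5\rangle$ of ${\bf g}$ together with the generation criterion of Lemma 8 in \cite{figula} (as applied in the proof of Proposition~\ref{directproduct}), one shows that $\langle\sigma(G/H)\rangle=G$ precisely when neither the $e_2$-coordinate function $v_1$ nor the $e_4$-coordinate function $v_2$ is affine-linear along the axes, i.e. when for each $i$ the function $v_i$ fails to satisfy $v_i(x,0,0)=ax$ and $v_i(0,y,z)=by+cz$ with real constants $a,b,c$ simultaneously. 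This last step is the main obstacle: one must identify carefully which proper subgroups of $G$ can contain $\sigma(G/H)$ and check that generating all of $G$ forces the non-linearity of \emph{each} of $v_1$ and $v_2$, not just of one of them. By contrast, the reduction to $H=\{g(0,k,0,l,0)\}$ and the derivation of (\ref{nilpotentszorzasmas}) together with the sharp-transitivity equations (\ref{nil5dim2}), (\ref{nil5dim3}) are routine computations with the explicit group law.
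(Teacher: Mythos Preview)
Your plan follows the paper's proof almost exactly: identify the Lie algebra and group law, normalize $H$ by automorphisms, parametrize sections by $(v_1,v_2)$, derive the sharp-transitivity equations and the multiplication, and finally analyze when $\sigma(G/H)$ generates $G$. Your argument for the normalization of $H$ is in fact cleaner than the paper's: you observe that the bracket induces a map $\Lambda^2({\bf g}/Z)\to Z$ whose kernel is the line $\mathbb R(\bar e_2\wedge\bar e_4)$, so an abelian $2$-plane in ${\bf g}/Z$ is forced to be $\langle\bar e_2,\bar e_4\rangle$, whereas the paper simply writes down the general form of ${\bf h}$ and kills the central parts by an explicit automorphism.

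Two small slips to fix. First, when you restrict the section to $y=z=0$ you omit the central terms: one has $\sigma(x,0,0)=g(x,v_1(x,0,0),\tfrac12 x\,v_1(x,0,0),v_2(x,0,0),\tfrac12 x\,v_2(x,0,0))$, not $g(x,v_1(x,0,0),0,v_2(x,0,0),0)$; this is exactly the curve $F$ the paper uses in the generation argument. Second, $\langle e_1,e_2,e_3\rangle$ and $\langle e_1,e_4,e_5\rangle$ are Heisenberg \emph{subalgebras} but not ideals (e.g.\ $[e_4,e_1]=-e_5\notin\langle e_1,e_2,e_3\rangle$); the paper instead works with the curve $F$ and the abelian normal subgroup $K=\{g(0,\ast,\ast,\ast,\ast)\}$ to carry out the generation analysis. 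Neither slip affects the overall strategy, but both should be corrected before the argument is written out in full.
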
 
\begin{proof} 
According to \cite{handbook}, p. 651, the Lie algebra ${\bf g}$ of $G$ is given by the basis $\{ e_1, e_2, e_3, e_4, e_5 \}$ with 
$[e_1, e_2]=e_3$, $[e_1, e_4]=e_5$. Using the Campbell-Hausdorff series (cf. \cite{campbell}) the Lie group $G$ can 
be represented on $\mathbb R^5$ by the multiplication 
\begin{equation} \label{nil1} g(x_1,x_2,x_3,x_4,x_5) g(y_1,y_2,y_3,y_4,y_5)=  \end{equation} 
\begin{equation} g(x_1+y_1, x_2+y_2, x_3+y_3+ \frac{1}{2}(x_1 y_2- x_2 y_1), x_4+y_4, x_5+y_5+ \frac{1}{2}(x_1 y_4- x_4 y_1)). \nonumber \end{equation} 
\noindent  
The subgroup $\{\exp (t e_3+ u e_5); t, u \in \mathbb R \}= \{ g(0,0,x_3,0,x_5); x_3, x_5 \in \mathbb R \}$ is the centre $Z(G)$ and also the commutator subgroup of $G$. 
Let $H$ be a $2$-dimensional abelian subgroup of $G$ which does not contain any non-trivial normal subgroup of $G$. Then the Lie algebra ${\bf h}$ of $H$ is given by 
\begin{equation} {\bf h}= \langle e_2+ c_1 e_3+ d_1 e_5, e_4+ c_2 e_3+ d_2 e_5 \rangle, \ c_i, d_i \in \mathbb R, 
\ i=1,2. \nonumber \end{equation} 
The automorphism $\alpha $ of the Lie algebra ${\bf g}$ given by $\alpha (e_1)=e_1$, $\alpha (e_3)= e_3$, $\alpha (e_5)= e_5$, 
$\alpha (e_2)=e_2 - c_1 e_3- d_1 e_5$, $\alpha (e_4)=e_4- c_2 e_3- d_2 e_5$ 
allows us to assume that the group $H$ has the form 
$H=\{ \exp (a_2 e_2+ a_4 e_4); a_2, a_4 \in \mathbb R \}= $$ \{ g(0,k,0,l,0); k, l \in \mathbb R \}$.  
Since every element of $G$ has a unique decomposition 
$g(x, 0, y, 0, z) g(0, k, 0, l, 0)$ the continuous functions $v_i(x,y,z)$, $i=1,2$, determine a continuous section 
$\sigma : G/H \to G$, $g(x, 0, y, 0, z) H \mapsto $ 
\begin{equation} \label{section5dim} 
 g(x, v_1(x,y,z), y +\frac{1}{2} x v_1(x,y,z), v_2(x,y,z), z+ \frac{1}{2} x v_2(x,y,z)). \end{equation}     
\noindent
The set $\sigma (G/H)$ acts sharply transitively on the factor space $G/H$ precisely if for every $x_1, y_1, z_1, x_2, y_2, z_2 \in \mathbb R$ there exists a unique triple $(x,y,z) \in \mathbb R^3$ such that 
\begin{equation} \label{nil5dim4}   g(x, v_1(x,y,z), y+ \frac{1}{2} x v_1(x,y,z), v_2(x,y,z), z+ \frac{1}{2} x v_2(x,y,z)) g(x_1, 0, y_1, 0, z_1) \nonumber  \end{equation}
\begin{equation} =g(x_2, t_1, y_2 +\frac{1}{2} x_2 t_1, t_2, z_2+\frac{1}{2} x_2 t_2) \end{equation} 
for  suitable $t_1, t_2 \in \mathbb R$. Equation (\ref{nil5dim4}) yields that $x= x_2- x_1$, $t_1= v_1(x,y,z)$, 
$t_2= v_2(x,y,z)$ and equations (\ref{nil5dim2}) and (\ref{nil5dim3}) in the assertion must have a unique solution 
$(y,z) \in \mathbb R^2$. 
A direct calculation shows that in the coordinate system $(x,y,z) \mapsto g(x,0,y,0,z)H$ 
the multiplication of the loop $L_{v_1,v_2}$ corresponding to the continuous functions $v_i(x,y,z)$, $i=1,2$, can be expressed as (\ref{nilpotentszorzasmas}) in the assertion.  

The loop $L$ is proper if and only if the image set $\sigma (G/H)$ of the section $\sigma $ given by (\ref{section5dim}) 
generates the group $G$.   
The set $\sigma (G/H)$ contains the subset 
\[ F=\{ g(x,v_1(x,0,0), \frac{1}{2} x v_1(x,0,0), v_2(x,0,0), \frac{1}{2} x v_2(x,0,0)); \ x \in \mathbb R \} \] and 
the abelian normal group  $K=\{ g(0,v_1(0,y,z),y,v_2(0,y,z), z); \ y, z \in \mathbb R \}$ of $G$. 
The group $\langle F \rangle $ topologically generated by the set $F$ and the group $K$ generate $G$ if  
$v_i(x,0,0) \neq a_i x$, for $i=1,2$, $a_1,a_2 \in \mathbb R$. If there is $i \in \{ 1,2 \}$ such that the function $v_i(x,0,0)=a x$, $a \in \mathbb R$, then 
$\sigma (G/H)$ does not generate $G$ if and only if $v_i(0,y,z)=b y + c z$, $b,c \in \mathbb R$ and the assertion follows.   \end{proof}

\begin{Prop} There is no  $3$-dimensional proper connected topological loop $L$ having the $5$-dimensional 
Lie group $G$ defined by the multiplication (\ref{nil1}) in Proposition \ref{5dimleftgroup} as its multiplication group.  
\end{Prop}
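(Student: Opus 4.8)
The plan is to show that the existence of such a loop $L$ would force $L$ to be a group, contrary to properness. We may assume that $L$ is simply connected, so by Lemma \ref{simplyconnected} it is homeomorphic to $\mathbb R^3$. Since $Mult(L)=G$ acts transitively on $L$, the stabiliser $Inn(L)$ of $e$ is a closed subgroup of dimension $\dim G-\dim L=2$; it is connected because $L$ is simply connected, hence (being the group corresponding to a $2$-dimensional subalgebra of a nilpotent Lie algebra) $Inn(L)\cong\mathbb R^2$ is abelian, and its core in $G$ is trivial. By the first assertion of Proposition \ref{5dimleftgroup}, after replacing $L$ by its image under a suitable automorphism of $G$ — which alters neither $Mult(L)$ nor the property of being a proper loop — we may assume $Inn(L)=H=\{g(0,k,0,l,0);\ k,l\in\mathbb R\}$.

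I would then identify $L$ with $G/H$ via $gH\mapsto g(e)$; under this identification $G=Mult(L)$ acts by left multiplication on cosets, the coordinates $(x,y,z)\leftrightarrow g(x,0,y,0,z)H$ parametrise $L\cong\mathbb R^3$, and the set $\Lambda(L)$ of left translations — a transversal to $H$ through the identity — has the form $\{\sigma(x,y,z)\}$ with $\sigma$ as in (\ref{section5dim}) for some continuous $v_1,v_2:\mathbb R^3\to\mathbb R$ with $v_i(0,0,0)=0$; the loop multiplication is then (\ref{nilpotentszorzasmas}) (the calculation of the multiplication from $\sigma$ in Proposition \ref{5dimleftgroup} does not use that $\sigma(G/H)$ generates $G$). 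The crucial step uses $Mult(L)=G$: for each $(a,b,c)$ the right translation $\rho_{(a,b,c)}$ lies in $G$, and since it sends $eH$ to the coset $\sigma(a,b,c)H$ and the core of $H$ in $G$ is trivial, it must coincide on $G/H$ with left multiplication by $\sigma(a,b,c)\,g(0,k,0,l,0)$ for suitable reals $k=k(a,b,c)$, $l=l(a,b,c)$. Evaluating this identity at the coset $(x,y,z)$: the left-hand side is $(x,y,z)\ast(a,b,c)$ by (\ref{nilpotentszorzasmas}), while — using that left multiplication by $g(0,k,0,l,0)$ carries the coset with coordinates $(x,y,z)$ to the one with coordinates $(x,y-kx,z-lx)$ — the right-hand side works out to $(x+a,\,y+b-kx-xv_1(a,b,c),\,z+c-lx-xv_2(a,b,c))$. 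Comparing the last two coordinates gives, for all $x,y,z\in\mathbb R$,
\[ a\,v_1(x,y,z)=x\big(k(a,b,c)+v_1(a,b,c)\big),\qquad a\,v_2(x,y,z)=x\big(l(a,b,c)+v_2(a,b,c)\big). \]

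Specialising to $(a,b,c)=(1,0,0)$ yields $v_1(x,y,z)=c_1x$ and $v_2(x,y,z)=c_2x$ with the constants $c_1=k(1,0,0)+v_1(1,0,0)$, $c_2=l(1,0,0)+v_2(1,0,0)$; thus $v_1,v_2$ are linear in $x$ and independent of $y,z$. Substituting into (\ref{nilpotentszorzasmas}), the multiplication becomes $(x_1,y_1,z_1)\ast(x_2,y_2,z_2)=(x_1+x_2,\,y_1+y_2-c_1x_1x_2,\,z_1+z_2-c_2x_1x_2)$, and the change of variables $(x,y,z)\mapsto(x,\,y+\frac{c_1}{2}x^2,\,z+\frac{c_2}{2}x^2)$ transforms it into coordinatewise addition on $\mathbb R^3$; hence $L$ is the group $\mathbb R^3$, contradicting that $L$ is proper. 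The delicate points are the reductions of the first paragraph — verifying that $Inn(L)$ is a $2$-dimensional abelian subgroup with trivial core so that Proposition \ref{5dimleftgroup} applies, and that an automorphism of $G$ may be used to normalise $Inn(L)$ — together with the coordinate bookkeeping in the second; beyond these I do not expect a serious obstacle, since once the setup is arranged the linearity of $v_1$ and $v_2$, and with it the associativity of $L$, is immediate.
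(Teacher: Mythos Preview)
Your argument is correct and is a cleaner variant of the paper's. The paper splits into two cases according to the dimension of the left-translation group $G^{(l)}$: when $G^{(l)}=G$, it identifies $L$ with a loop $L_{v_1,v_2}$ from Proposition~\ref{5dimleftgroup} and applies the Niemenmaa--Kepka criterion (Lemma~\ref{kepkalemma}) to an \emph{arbitrary} $H$-connected transversal $T$, obtaining $m\,v_i(x,y,z)=x\,h_i(m,n,u)$ and hence $v_i(x)=c_ix$, so that $\Lambda_{v_1,v_2}\cup T$ cannot generate $G$; when $\dim G^{(l)}=4$, it first argues that $G$ contains no copy of $\mathcal{F}_4$, so $G^{(l)}\cong\mathcal{F}_3\times\mathbb R$ and $L$ is one of the loops $L_v$ of Proposition~\ref{directproduct}(i), and then a second application of Lemma~\ref{kepkalemma} (with $\Lambda_v$ embedded in $G$) forces $h_2\equiv 0$ and again a failure to generate. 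You bypass both the case distinction and the abstract transversal $T$: using only that each right translation $\rho_{(a,b,c)}$ already lies in $G=Mult(L)$ and therefore acts on $G/H$ as left multiplication by an element of the coset $\sigma(a,b,c)H$, you derive the same identities and the linearity $v_i=c_ix$ in one stroke, and then observe that this makes the multiplication~\eqref{nilpotentszorzasmas} associative, so $L$ is not proper. The computations at the heart of both arguments coincide, but yours needs no information about $G^{(l)}$, avoids the embedding of $\mathcal{F}_3\times\mathbb R$ in $G$, and replaces the ``$\Lambda\cup T$ does not generate $G$'' endgame with the more transparent conclusion that $L$ is a group.
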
 
\begin{proof} 
If there exits a loop $L$ with the desired properties, then there is also a simply connected loop with such properties. By Lemma \ref{simplyconnected} we may assume that $L$ is homeomorphic to $\mathbb R^3$.  If the group $G^{(l)}$ topologically generated by all left translations of $L$ coincides with the multiplication group $Mult(L)$ of $L$ and both groups are isomorphic to the group $G$ of the assertion, then the inner mapping group $Inn(L)$ of $L$ coincides with the group 
$H= \{ g(0,k,0,l,0); k, l \in \mathbb R \}$ of Proposition \ref{5dimleftgroup}. Moreover, the multiplication of the loop $L$ is given by (\ref{nilpotentszorzasmas}). 
The set 
$\Lambda _{v_1,v_2} =\{ \lambda _{(x,y,z)}; \ (x,y,z) \in L \}$ of all left translations of the loop $L$ in the group $G$ has the form 
\begin{equation} \{ g(x, v_1(x,y,z), y +\frac{1}{2} x v_1(x,y,z), v_2(x,y,z), z+ \frac{1}{2} x v_2(x,y,z)); x, y, z \in \mathbb R \}. \nonumber \end{equation}
An arbitrary transversal $T$ of the group $Inn(L)$ in $G$ has the form 
\[ T=\{ g(m, h_1(m,n,u), n, h_2(m,n,u), u); m, n, u \in \mathbb R \}, \] 
where $h_i:\mathbb R^3 \to \mathbb R$, $i=1,2$, are continuous functions with $h_i(0,0,0)=0$. By Lemma \ref{kepkalemma} the group $G$ is isomorphic 
to $Mult(L)$ precisely if the set 
$\{a^{-1} b^{-1} a b;\ a \in \Lambda _{v_1,v_2}, b \in T \}$ is contained in $Inn(L)$  
and the set $\{ \Lambda _{v_1,v_2}, T \}$ generates the group $G$.   
The products $a^{-1} b^{-1} a b$ with $a \in \Lambda _{v_1,v_2}$ and $b \in T$ are elements of $Inn(L)$ if and only if the equations
\begin{equation} m v_1(x,y,z)= x h_1(m,n,u), \ \ m v_2(x,y,z)= x h_2(m,n,u) \nonumber \end{equation} 
are satisfied for all $m,n,u,x,y,z \in \mathbb R$. Since the right-hand side of both equations are independent of $y$ and $z$ and the left-hand side does not depend on $n$ and $u$ we obtain $v_i(x,y,z)=v_i(x)$, $h_i(m,n,u)=h_i(m)$ and hence 
$m v_i(x)= x h_i(m)$ for all $i=1,2$. 
This yields for any $m \neq 0$, $x \neq 0$ that 
\begin{equation} \label{nil5dim11uj} \frac{v_1(x)}{x}= \frac{h_1(m)}{m}= c_1, \ \  \frac{v_2(x)}{x}= \frac{h_2(m)}{m}= c_2, \nonumber \end{equation}  
where $c_1, c_2 \in \mathbb R \backslash \{ 0 \}$. Since $v_1(0)=v_2(0)=0=h_1(m)=h_2(m)$ this implies $v_i(x)=c_i x$, $h_i(m)=c_i m$, $i=1,2$.  
But then the set $\{ \Lambda _{v_1,v_2}, T \}$ does not generate 
the group $G$. Hence there is no topological loop $L$ homeomorphic to $\mathbb R^3$ such that the multiplication group $Mult(L)$ of $L$ as well as the group $G^{(l)}$ topologically generated by all left translations of $L$ coincide with the group $G$.

\medskip
\noindent
Now we assume that there is a topological loop $L$ homeomorphic to $\mathbb R^3$ such that the group $G^{(l)}$ topologically generated by the left translations of $L$ has dimension $4$ and the multiplication group $Mult(L)$ of $L$ is isomorphic to the group $G$. 
As the group $G$ has a $2$-dimensional centre, according to Lemma \ref{innermappinglemma} (b) the inner mapping group $Inn(L)$ of $L$ is a $2$-dimensional abelian subgroup of $G$. By Proposition \ref{5dimleftgroup} we may assume that the $2$-dimensional abelian subgroup $Inn(L)$ of $G$ has the form $Inn(L)=\{ g(0,k,0,l,0); k, l \in \mathbb R \}$. 
Since $G^{(l)} < G$ and there is no subgroup of $G$ isomorphic to the filiform Lie group ${\mathcal F}_{4}$ the group $G^{(l)}$ must be the direct product  ${\mathcal F}_{3} \times \mathbb R$. Each loop $L$ homeomorphic to $\mathbb R^3$ and having the group ${\mathcal F}_{3} \times \mathbb R$ as the group generated by the left translations are isomorphic to a loop $L_v$ given in Proposition \ref{directproduct} (i). We prove that none of these loops $L_v$ has the group $G$ as 
its multiplication group $Mult(L_v)$. We may assume that the set 
$\Lambda _v =\{ \lambda _{(x,y,z)}; \ (x,y,z) \in L_v \}$ of all left translations of $L_v$ in the group $G$ has the form 
\begin{equation} \Lambda _v =\{ g(x, v(x,z), y + \frac{1}{2} x v(x,z), 0, z); x, y, z \in \mathbb R \}. \nonumber \end{equation}
An arbitrary transversal $T$ of the group $Inn(L)$ in $G$ has the form 
\[ T=\{ g(m, h_1(m,n,u), n, h_2(m,n,u), u); m, n, u \in \mathbb R \}, \]  
where $h_i:\mathbb R^3 \to \mathbb R$, $i=1,2$, are continuous functions with $h_i(0,0,0)=0$. By Lemma \ref{kepkalemma} the group $G$ is isomorphic 
to $Mult(L_{v})$ precisely if the set 
$\{a^{-1} b^{-1} a b;\ a \in \Lambda _v, b \in T \}$ is contained in $Inn(L)$  
and the set $\{ \Lambda _{v}, T \}$ generates the group $G$.   
The products $a^{-1} b^{-1} a b$ with $a \in \Lambda _{v}$ and $b \in T$ are elements of $Inn(L)$ if and only if the equations
\begin{equation} m v(x,z)=x h_1(m,n,u), \ \ \ x h_2(m,n,u)=0 \nonumber \end{equation} 
are satisfied for all $m,n,u,x,z \in \mathbb R$. The last equation gives $h_2(m,n,u)=0$. But then the set $\{ \Lambda _{v}, T \}$ does not generate 
the group $G$. This contradiction gives the assertion.   \end{proof}

\section{$3$-dimensional topological loops with $1$-dimensional centre} 

In this section we show that the $3$-dimensional connected simply connected topological loops $L_{v(x,z)}$ with multiplication (\ref{multiplileft2}) given in Proposition \ref{directproduct} (i) have the direct product ${\mathcal F}_{n+2} \times _Z {\mathcal F}_{m+2}$ of the elementary filiform Lie groups ${\mathcal F}_{n+2}$ and ${\mathcal F}_{m+2}$ with amalgamated centre $Z$
 as the multiplication group if and only if the continuous function $v(x,z) $ occuring in (\ref{multiplileft2}) is a polynomial in two variables the degree of which as polynomial in $x$, respectively in $y$ is $n$, respectively $m$, where 
 $(n, m) \neq (1,1)$. For these loops the group topologically generated by the left translations is isomorphic to ${\mathcal F}_{3} \times \mathbb R$. Here we give for any $m >1$, $n>1$ examples of $3$-dimensional topological loops $L$ for which the group topologically generated by the left translations is isomorphic to the group  
${\mathcal F}_{n+2} \times _Z {\mathcal F}_{m+2}$ and coincides with the multiplication group of $L$.

\begin{Prop} \label{directproductsfiliform} 
(i) Let $L_v$ be a connected simply connected topological loop $L_v=(\mathbb R^3, \ast )$ defined by the multiplication (\ref{multiplileft2}) in Proposition \ref{directproduct}. The multiplication group $Mult(L_v)$ of the loop $L_v$ is isomorphic to the direct product 
${\mathcal F}_{n+2} \times _Z {\mathcal F}_{m+2}$ of the elementary filiform Lie groups ${\mathcal F}_{n+2}$ and 
${\mathcal F}_{m+2}$ with amalgamated centre $Z$ precisely if the function $v(x,z)$ has the form 
$v(x,z)=a_n x^n+ \cdots +a_1 x+ b_m z^m+ \cdots + b_1 z$, 
where $(n, m) \in \mathbb N^2 \backslash \{(1,1) \}$, $a_i, b_j \in \mathbb R$, $i=1, \cdots ,n$, $j=1, \cdots ,m$, 
$a_n \neq 0, b_m \neq 0$.    
\newline
\noindent
(ii) Let $L$ be a connected simply connected topological loop $L=(\mathbb R^3, \ast )$ with the multiplication 
{\small \begin{equation} \label{amalgamatedfiliform} (x_1,y_1,z_1) \ast (x_2,y_2,z_2)=  \nonumber \end{equation} 
\begin{equation} (x_1+x_2, y_1+y_2, z_1+z_2- x_2 v_1(x_1,y_1)+  \frac{x_2^2}{2!} v_2(x_1,y_1)+ \cdots + \frac{(-1)^n}{n!} x_2^n  v_n(x_1,y_1) - \nonumber \end{equation} \begin{equation}  
-y_2 u_1(x_1,y_1)+  \frac{y_2^2}{2!} u_2(x_1,y_1)+ \cdots + \frac{(-1)^m}{m!} y_2^m  u_m(x_1,y_1)),   \end{equation} }
\noindent
where $v_i: \mathbb R^2 \to \mathbb R$, $u_j: \mathbb R^2 \to \mathbb R$, $i=1, 2, \cdots ,n$, $j=1,2, \cdots ,m$,  are continuous functions with 
$v_i(0,0)=0$, $u_j(0,0)=0$ such that 
the functions $v_n$ as well as $u_m$ do not fulfill the identities $v_n(x,0)=a_1 x$, $v_n(0,y)=a_2 y$, $u_m(x,0)=a_3 x$, $u_m(0,y)=a_4 y$, 
$a_i \in \mathbb R$, $i=1,2,3,4$, simultaneously. 
Then the group $G$ topologically generated by the left translations of $L$ is isomorphic to the group ${\mathcal F}_{n+2} \times _Z {\mathcal F}_{m+2}$ with $n,m \ge 1$. 

For $n >1$ and $m >1$ the group $G$ coincides with the group $Mult(L)$ topologically generated by all left and right  translations 
of $L$ if and only if there are continuous functions $s_i: \mathbb R^2 \to \mathbb R$, $f_j: \mathbb R^2 \to \mathbb R$ 
with $s_i(0,0)=0$ and $f_j(0,0)=0$, $i=1, \cdots ,n$, $j=1, \cdots ,m$, such that for all 
$x,y,p,q \in \mathbb R$ one has 
\begin{equation} \label{equequamalgamated} -x (s_1(p,q)+v_1(p,q))+ \cdots +(-1)^n \frac{x^n}{n!} (s_n(p,q) + v_n(p,q))- \nonumber \end{equation} 
\begin{equation} - y (f_1(p,q)+u_1(p,q))+ \cdots +(-1)^m \frac{y^m}{m!} (f_m(p,q) + u_m(p,q))= \nonumber \end{equation}
\begin{equation} =-p v_1(x,y)+ \frac{p^2}{2!} v_2(x,y)+ \cdots +(-1)^n \frac{p^n}{n!} v_n(x,y)- \nonumber \end{equation}
\begin{equation} -q u_1(x,y)+ \frac{q^2}{2!} u_2(x,y)+ \cdots +(-1)^m \frac{q^m}{m!} u_m(x,y).  \end{equation}  
\end{Prop}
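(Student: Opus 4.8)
The scheme for part (i) is that of Proposition \ref{directproduct}(i), proved via the transversal criterion of Lemma \ref{kepkalemma}, and the scheme for part (ii) is that of Proposition \ref{directproduct}(ii), proved via the section construction. For part (i), recall from Proposition \ref{directproduct}(i) that $L_v$ has the group ${\mathcal F}_3 \times \mathbb R$ as the group topologically generated by its left translations, that $Z(L_v) \cong \mathbb R$ with $L_v/Z(L_v) \cong \mathbb R^2$, and that $Mult(L_v)$ is a Lie group precisely if $v(x,z)$ is a finite linear combination of the functions (\ref{sincosexp}); in that case Proposition \ref{1dimensionalcentreegy}(a) gives $Mult(L_v) = (Z \times Inn(L_v)) \rtimes \mathbb R^2$ with $Z \cong \mathbb R$ the centre. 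I would fix a faithful matrix model of $K := {\mathcal F}_{n+2} \times _Z {\mathcal F}_{m+2}$, obtained from two blocks of the shape (\ref{pmatrixujuj}) glued along their single central entry, so that its Lie algebra consists of two filiform chains $\langle e_1, \dots, e_{n+1} \rangle$ and $\langle e_1', \dots, e_{m+1}' \rangle$ together with the common centre $\langle c \rangle$ and $[e_i, e_j'] = 0$ for all $i, j$. Realizing the set $\Lambda_v$ of left translations inside $K$ and taking a general left transversal $T$ of the candidate inner mapping group $S \cong \mathbb R^{n+m}$ with continuous coordinate functions $h_1, \dots, h_n, f_1, \dots, f_m$, the condition $a^{-1} b^{-1} a b \in S$ of Lemma \ref{kepkalemma}, for $a \in T$ and $b \in \Lambda_v$, becomes an identity $p\, v(x,z) = \sum_{i=1}^{n} (-1)^{i+1} \frac{x^i}{i!} h_i + \sum_{j=1}^{m} (-1)^{j+1} \frac{z^j}{j!} f_j$, where $h_i, f_j$ depend only on the transversal parameters; since the left side is independent of those parameters while the first sum is independent of $z$ and the second of $x$, a comparison of variables forces $v(x,z) = v_1(x) + v_2(z)$ with $v_1, v_2$ polynomials of degree $\le n$, $\le m$ and $h_i(p) = a_i p$, $f_j(p) = b_j p$. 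The generation condition $\langle \Lambda_v, T \rangle = K$, verified as in Lemma 8 in \cite{figula}, then forces $a_n \ne 0$, $b_m \ne 0$ and excludes $(n,m) = (1,1)$ (for which $v$ obeys both identities $v(x,0) = a_1 x$ and $v(0,z) = b_1 z$, so $L_v$ is not proper by Proposition \ref{directproduct}(i)); conversely, for $v$ of the stated form the transversal $T$ with $h_i(p) = a_i p$, $f_j(p) = b_j p$ satisfies both conditions of Lemma \ref{kepkalemma}, so $Mult(L_v) \cong K$.

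For part (ii) I would realize $L$ as a sharply transitive section $\sigma : G/H \to G$ of $G := {\mathcal F}_{n+2} \times _Z {\mathcal F}_{m+2}$, where, up to automorphisms of $G$, the subgroup $H$ corresponds to $\langle e_2, \dots, e_{n+1}, e_2', \dots, e_{m+1}' \rangle$; the functions $v_1, \dots, v_n, u_1, \dots, u_m$ determine $\sigma$ as in (\ref{sectionujmas}), the sharp-transitivity equations are solved exactly as in Proposition \ref{directproduct}(ii) (the $x$- and $y$-coordinates directly, the $z$-coordinate from the remaining linear equation), and a direct calculation in the coordinates $g(x, 0, \dots, 0, y, 0, \dots, 0, z) H$ yields the multiplication (\ref{amalgamatedfiliform}). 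Properness of $L$ means $\sigma(G/H)$ generates $G$; since $\sigma(G/H)$ always contains the commutator subgroup of $G$ and a one-parameter subgroup in each of the two generating directions, Lemma 8 in \cite{figula} shows this is equivalent to the stated non-degeneracy of $v_n$ and of $u_m$, so for all admissible $v_i, u_j$ the group topologically generated by the left translations of $L$ is $G \cong {\mathcal F}_{n+2} \times _Z {\mathcal F}_{m+2}$. For the last assertion, with $n, m > 1$, I would apply Proposition 18.16 in \cite{loops}: $G$ coincides with $Mult(L)$ if and only if for every $y \in L$ the map $f(y) : x \mapsto y \lambda_x \lambda_y^{-1}$ lies in $H$. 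Transcribing this into the matrix model and equating the $H$-components gives, after discarding the summands already forced into $H$, precisely equation (\ref{equequamalgamated}) for suitable continuous functions $s_i, f_j$; putting $x = y = 0$ and using the linear independence of $x, x^2, \dots, x^n, y, y^2, \dots, y^m$ yields $s_i(0,0) = 0$ and $f_j(0,0) = 0$.

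The main obstacle is the simultaneous bookkeeping of the two filiform chains. In part (i) the crucial step is that $v$ contains no mixed monomial $x^i z^j$ with $i, j \ge 1$: such a term would produce a Lie-algebra vector on which both the adjoint action of $e_1$ and that of $e_1'$ act nontrivially, which is impossible in $K$ because $[e_1, e_1'] = 0$ and the two chains intersect only in $\langle c \rangle$; once the separation $v(x,z) = v_1(x) + v_2(z)$ is in hand the computation decouples into two instances of the single-chain argument of Proposition \ref{directproduct}(i). In part (ii) the difficulty is that, unlike in Proposition \ref{directproduct}(ii), the criterion of Proposition 18.16 in \cite{loops} no longer forces $v_i$ to depend on $x$ alone and $u_j$ on $y$ alone, so one must retain the full two-variable functions and show that (\ref{equequamalgamated}) is both necessary and sufficient, the normalization $s_i(0,0) = f_j(0,0) = 0$ being the delicate final point.
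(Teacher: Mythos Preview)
Your approach matches the paper's proof closely: both parts use the same matrix model of $K={\mathcal F}_{n+2}\times_Z{\mathcal F}_{m+2}$, part (i) goes through the transversal criterion of Lemma \ref{kepkalemma} to obtain the identity $k\,v(x,z)=\sum_i(-1)^{i+1}\frac{x^i}{i!}h_i+\sum_j(-1)^{j+1}\frac{z^j}{j!}f_j$ and then separates variables, and part (ii) builds the section, verifies sharp transitivity, checks generation via Lemma 8 of \cite{figula}, and applies Proposition 18.16 of \cite{loops} to derive (\ref{equequamalgamated}).

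Two small slips to fix. In part (ii), $\sigma(G/H)$ does \emph{not} contain the commutator subgroup of $G$: setting $x=y=0$ in the section only gives the one–dimensional centre $Z=\{g(0,\dots,0,z)\}$, and the paper's generation argument uses $Z$ together with the two one–parameter subsets $F_1,F_2$. Second, to obtain $s_i(0,0)=f_j(0,0)=0$ you must set $p=q=0$ (not $x=y=0$): with $p=q=0$ the right side of (\ref{equequamalgamated}) vanishes, the left side becomes a polynomial in $x$ and $y$ with coefficients $s_i(0,0)+v_i(0,0)=s_i(0,0)$ and $f_j(0,0)+u_j(0,0)=f_j(0,0)$, and linear independence of $x,\dots,x^n,y,\dots,y^m$ gives the conclusion. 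Setting $x=y=0$ yields only $0=0$.
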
 
\begin{proof} 
We can represent the elements of the Lie group ${\mathcal F}_{n+2} \times _Z {\mathcal F}_{m+2}$ as follows:  
{\small \begin{equation} \label{amalgamated} g(c,a_1,a_2, \cdots , a_n, b, d_1, d_2, \cdots ,d_m, k_1+k_2)=  \end{equation}
\begin{equation}  \label{pmatrix} \begin{pmatrix} 
1 & a_1 & a_2  & \dots & a_{n-1} & a_n & k_1 \\
0 & 1 & 0  & 0 & \dots  & 0 & -c  \\
0 & -c & 1 & 0 & \dots  & 0 & \frac{c^2}{2!}  \\
0 & \frac{c^2}{2!}  & -c & 1 & \dots & 0 & - \frac{c^3}{3!}  \\
\hdotsfor{7} \\
0 & \frac{(-1)^{n-1}}{(n-1)!} c^{n-1}  & \frac{(-1)^{n-2}}{(n-2)!}  c^{n-2} & \dots  & \frac{(-1)}{(n-(n-1))!} c^{1}  & 1 & \frac{(-1)^n}{n!} c^{n} \\
0 & 0  & 0 & \dots  &  0 & 0 & 1 \end{pmatrix}, \nonumber \end{equation}
\begin{equation} \label{pmatrixuj} \begin{pmatrix} 
1 & d_1 & d_2  & \dots & d_{m-1} & d_m & k_2 \\
0 & 1 & 0  & 0 & \dots  & 0 & -b  \\
0 & -b & 1 & 0 & \dots  & 0 & \frac{b^2}{2!} \\
0 & \frac{b^2}{2!}  & -b & 1 & \dots & 0 & - \frac{b^3}{3!} \\
\hdotsfor{7} \\
0 & \frac{(-1)^{m-1}}{(m-1)!} b^{m-1}  & \frac{(-1)^{m-2}}{(m-2)!}  b^{m-2} & \dots  & \frac{(-1)}{(m-(m-1))!} b^{1}  & 1 & \frac{(-1)^m}{m!} b^m \\
0 & 0  & 0 & \dots  &  0 & 0  & 1 \end{pmatrix}, k_1+k_2, \nonumber \end{equation} }
\noindent
with $a_i, b, c, d_j, k_1, k_2 \in \mathbb R, i=1,2, \cdots , n$, $j=1,2, \cdots , m$. 
\newline
\noindent
Let $K$ be the group ${\mathcal F}_{n+2} \times _Z {\mathcal F}_{m+2}$, $n, m \ge 1$ and $S$ be the subgroup 
\[ S=\{ g(0,t_1, \cdots ,t_n,0,k_1, \cdots ,k_m,0); \ t_i, k_j \in \mathbb R, i=1,2, \cdots ,n, j=1, \cdots ,m \}. \]
The set $\Lambda _{v}=\{ \lambda _{(x,y,z)}; \ (x,y,z) \in L_{v} \}$ 
of all left translations of the loop $L_{v}$ defined by multiplication (\ref{multiplileft2}) in the group $K$ has the shape  
\[ \Lambda _{v}=\{ g(x,v(x,z),0, \cdots , 0, y,0, \cdots ,0,z); \ x,y,z \in \mathbb R \}.  \]  
An arbitrary transversal $T$ of the group $S$ in the group $K$ has the form 
\[ \{ g(k,h_1(k,l,w), \cdots ,h_n(k,l,w),l,f_1(k,l,w), \cdots ,f_m(k,l,w), w);\  k,l,w \in \mathbb R \}, \]
where $h_i: \mathbb R^3 \to \mathbb R$, $f_j: \mathbb R^3 \to \mathbb R$, $i=1, \cdots ,n$, $j=1, \cdots ,m$, are continuous  functions with $h_i(0,0,0)=f_j(0,0,0)=0$. 
According to Lemma \ref{kepkalemma}, the group $K$ is isomorphic to the multiplication group $Mult(L_{v})$ of the loop $L_{v}$ precisely if the set 
$\{a^{-1} b^{-1} a b;\ a \in T, b \in \Lambda _{v} \}$ is contained in $S$   
and the set $\{ \Lambda _{v}, T \}$ generates the group $K$.  
The products $a^{-1} b^{-1} a b$ with $a \in T$ and 
$b \in \Lambda _{v}$ are elements of $S$ if and only if the equation 
\begin{equation} \label{equamalgamated1} k v(x,z)= \end{equation}
\begin{equation} (-1)^{n+1} \frac{x^n}{n!} h_n(k,l,w)+(-1)^{n} \frac{x^{n-1}}{(n-1)!} h_{n-1}(k,l,w)+ \cdots +(-1)^2 x h_1(k,l,w)+ \nonumber  \end{equation} 
\begin{equation} (-1)^{m+1} \frac{z^m}{m!} f_m(k,l,w)+(-1)^{m} \frac{z^{m-1}}{(m-1)!} f_{m-1}(k,l,w)+ \cdots +(-1)^2 z f_1(k,l,w) \nonumber  \end{equation} 
\noindent
holds for all $x,z,k,l,w \in \mathbb R$. 
Since the left hand side of (\ref{equamalgamated1}) does not depend on the variables $l$ and $w$ we have $h_i(k,l,w)=h_i(k)$ and $f_j(k,l,w)=f_j(k)$ 
for all $1 \le i \le n$, $1 \le j \le m$. Hence for any $k \neq 0$ identity (\ref{equamalgamated1}) reduces to 
\begin{equation} \label{equamalgamated2} v(x,z)=(-1)^{n+1} \frac{x^n}{n!} \frac{h_n(k)}{k}+(-1)^{n} \frac{x^{n-1}}{(n-1)!} \frac{h_{n-1}(k)}{k}+ \cdots 
+(-1)^2 x \frac{h_1(k)}{k}+ \nonumber \end{equation}  
\begin{equation} (-1)^{m+1} \frac{z^m}{m!} \frac{f_m(k)}{k}+(-1)^{m} \frac{z^{m-1}}{(m-1)!} \frac{f_{m-1}(k)}{k}+ \cdots 
+(-1)^2 z \frac{f_1(k)}{k}.  \end{equation}  
First let $n=m=1$. Then for $z=0$ and for any $x \neq 0$ we get $\frac{v(x,0)}{x}=\frac{h_1(k)}{k}=a= \hbox{const} (\neq 0)$. Moreover, for $x=0$ and for any 
$z \neq 0$ we have $\frac{v(0,z)}{z}=\frac{f_1(k)}{k}=b=\hbox{const} (\neq 0)$. Since $v(0,0)=0$ this implies $v(x,0)= a x$ and $v(0,z)=b z$. This is a contradiction to the fact that the function $v$ does not fulfill the identities $v(x,0)=a x$ and $v(0,z)= b z$, $a,b \in \mathbb R$, simultaneously (cf. Proposition \ref{directproduct} (i)).   

Now let $(n,m) \in \mathbb N^2 \setminus \{(1,1) \}$.  
Since the polynomials $x, x^2, \cdots , x^n$ as well as $z, z^2, \cdots , z^m$ are linearly independent, the function $v: \mathbb R^2 \to \mathbb R$ depends only on the variables $x$, $z$ and the functions $h_i: \mathbb R \to \mathbb R$, $f_j: \mathbb R \to \mathbb R$ are independent of $x$, $z$ the equation (\ref{equamalgamated2}) is satisfied if and only if 
$h_i(k)=a_i k$  and $f_j(k)= b_j k$ with $a_i, b_j \in \mathbb R$ for all $1 \le i \le n$, $1 \le j \le m$, $a_n \neq 0$, $b_m \neq 0$. This yields assertion (i).  

\medskip
\noindent
Now we prove assertion (ii). Let $G$ be the group ${\mathcal F}_{n+2} \times _Z {\mathcal F}_{m+2}$ and $\sigma : G/H \to G$ be the continuous section $g(x,0, \cdots ,0,y,0, \cdots ,0,z)H \mapsto $ 
\begin{equation} \label{sectionuj}  g(x,v_1(x,y), \cdots ,v_n(x,y),y,u_1(x,y),\cdots ,u_m(x,y),z), \end{equation} 
where $H= \{ g(0,t_1,t_2, \cdots ,t_n,0,k_1,k_2, \cdots ,k_m,0); t_i, k_j \in \mathbb R \}$ and $v_i: \mathbb R^2 \to \mathbb R$, $u_j: \mathbb R^2 \to \mathbb R$ are continuous functions with $v_i(0,0)= u_j(0,0)=0$ for all $i=1, \cdots ,n$, 
$j=1, \cdots ,m$.  
The set  
$\sigma (G/H)$ acts sharply transitively on the factor space $G/H$ because of  for all  
$(x_1,y_1,z_1), (x_2,y_2,z_2) \in \mathbb R^3$ the equation 
\begin{equation} \label{equfiluj2} g(x,v_1(x,y), \cdots ,v_n(x,y),y,u_1(x,y),\cdots ,u_m(x,y),z) \cdot \nonumber \end{equation}
\begin{equation} g(x_1,0, \cdots ,0,y_1,0, \cdots ,0,z_1)= \nonumber \end{equation} 
\begin{equation} g(x_2,0, \cdots ,0,y_2,0, \cdots ,0,z_2) g(0,t_1,t_2, \cdots ,t_n,0,k_1,k_2, \cdots ,k_m,0)  \end{equation}
has the unique solution $x= x_2-x_1$, $y= y_2-y_1$,
\begin{eqnarray} 
z &=& z_2-z_1+x_1 v_1(x,y)+ \cdots +\frac{(-1)^{n+1}}{n!} x_1^n v_n(x,y)+   \nonumber \\
  & & +y_1 u_1(x,y)-\frac{y_1^2}{2!} u_2(x,y)+ \cdots +\frac{(-1)^{m+1}}{m!} y_1^m u_m(x,y) \nonumber \end{eqnarray}  
with the real numbers 
\begin{eqnarray} 
t_i &=& \sum \limits_{k=i}^n (-1)^{k-i}  \frac{x_1^{k-i}}{(k-i)!} v_k(x,y) \ \hbox{for all}\ i=1,2, \cdots ,n, \nonumber \\
k_j &=& \sum \limits_{l=j}^m (-1)^{l-j}  \frac{y_1^{l-j}}{(l-j)!} u_l(x,y) \ \hbox{for all}\ j=1,2, \cdots ,m. \nonumber \
\nonumber \end{eqnarray} 
Hence the section $\sigma $ defines a $3$-dimensional topological loop $L$ the multiplication of which in the coordinate system 
$(x,y,z) \mapsto g(x, 0, \cdots ,0, y, 0, \cdots ,0,z)H$ is determined  if we apply 
\begin{equation} \sigma (g(x_1,0, \cdots ,0,y_1,0, \cdots ,0,z_1)H)= \nonumber \end{equation} 
\begin{equation} g(x_1,v_1(x_1,y_1), \cdots ,v_n(x_1,y_1),y_1,u_1(x_1,y_1), \cdots ,u_m(x_1,y_1),z_1) \nonumber \end{equation}
\noindent
to the left coset 
$g(x_2,0, \cdots ,0,y_2,0, \cdots ,0,z_2)H$ and find in the image coset the element of $G$ which lies in the set 
$\{ g(x,0, \cdots ,0,y,0, \cdots ,0,z)H; \ x,y,z \in \mathbb R\}$. A direct computation yields the multiplication (\ref{amalgamatedfiliform}) of the assertion (ii).  
\noindent
The group topologically generated by the left translations of the loop $L$ given by (\ref{amalgamatedfiliform}) is isomorphic to $G={\mathcal F}_{n+2} \times _Z {\mathcal F}_{m+2}$ if and only if the set 
$\sigma (G/H)= \{g(x,v_1(x,y), \cdots, v_n(x,y),y,u_1(x,y), \cdots, u_m(x,y),z);\ x,y,z \in \mathbb R \}$
generates the whole group $G$.    
The set $\sigma (G/H)$ contains the subset 
\[ F=\{ g(x,v_1(x,y), \cdots ,v_n(x,y),y, u_1(x,y), \cdots , u_m(x,y),0); \ x,y \in \mathbb R \} \] and 
the centre  $Z=\{ g(0, \cdots ,0, z); \ z \in \mathbb R \}$ of $G$. 
The group $\langle F \rangle $ topologically generated by the set $F$ and the group $Z$ generate $G$ 
if the projection of $\langle F \rangle $ onto the set $\{ g(k,k_1, \cdots ,k_n,l,l_1, \cdots ,l_m,0); k,l,k_i,l_j \in \mathbb R, i=1,\cdots ,n, j=1, \cdots m \}$ has dimension $n+m+2$. 
The set $F$ contains the subsets 
\[ F_1=\{ g(x,v_1(x,0), \cdots ,v_n(x,0),0, u_1(x,0), \cdots , u_m(x,0),0); \ x \in \mathbb R \}, \] 
\[ F_2=\{ g(0,v_1(0,y), \cdots ,v_n(0,y),y, u_1(0,y), \cdots , u_m(0,y),0); \ y \in \mathbb R \}. \]  
We have $F_1 \cap F_2= 0$. 
Using Lemma 8 (cf. \cite{figula}, p. 426) for the subsets $F_1$ and $F_2$ we obtain that the set $\sigma (G/H)$ generates $G$ precisely if the functions 
$v_n(x,y)$ and $u_m(x,y)$ do not satisfy the identities $v_n(x,0)=a_1 x$, $v_n(0,y)=a_2 y$, $u_m(x,0)=a_3 x$, $u_m(0,y)=a_4 y$ simultaneously, 
$a_i \in \mathbb R$, $i=1,2,3,4$.  

Now let $n>1$ and $m>1$. 
According to Proposition 18.16 in \cite{loops}, p. 246, the Lie group $G={\mathcal F}_{n+2} \times _Z {\mathcal F}_{m+2}$  coincides with the group topologically generated by all translations of the loop $L$  given by the multiplication (\ref{amalgamatedfiliform})  
if and only if for every $y \in L$ the map $f(y): x \mapsto y \lambda _x \lambda _y^{-1}: L \to L$ is an element of 
\[ H=\{ g(0,t_1, \cdots ,t_n,0,k_1, \cdots ,k_m,0); t_i, k_j \in \mathbb R, i=1, \cdots ,n, j=1, \cdots ,m \}. \] 
This is equivalent to the condition that for every $(p,q,r)$, $(x,y,z) \in \mathbb R^3$ the element 
\begin{equation} [g(p,v_1(p,q), \cdots ,v_n(p,q),q,u_1(p,q), \cdots ,u_m(p,q),r)]^{-1} \cdot \nonumber \end{equation}
\begin{equation} [g(x,v_1(x,y), \cdots ,v_n(x,y),y, u_1(x,y), \cdots ,u_m(x,y),z)] \cdot \nonumber \end{equation}
\begin{equation} g(p,0, \cdots ,0,q,0, \cdots ,0,r) \nonumber \end{equation} 
\noindent
can be represented by an element 
\begin{equation} g(0,s_1(p,q,r), \cdots ,s_n(p,q,r),0, f_1(p,q,r), \cdots ,f_m(p,q,r),0) \cdot \nonumber \end{equation}
\begin{equation} g(x,0, \cdots ,0,y,0, \cdots ,0,z) g(0,-t_1, \cdots ,-t_n,0,-k_1, \cdots ,-k_m,0) \nonumber \end{equation} 
with suitable functions $s_i(p,q,r)$, $f_j(p,q,r)$, $1 \le i \le n$, $1 \le j \le m$ and $t_1$, $\cdots $, $t_n$, $k_1$, $\cdots $, $k_m \in \mathbb R$.  This gives the following  
\begin{equation}  
g(x,v_1(x,y), \cdots ,v_n(x,y),y,u_1(x,y),\cdots ,u_m(x,y),z) \cdot \nonumber \end{equation}  
\begin{equation} g(p,t_1, \cdots ,t_n,q,k_1, \cdots ,k_m,r) =   \nonumber  \end{equation} 
\begin{equation} \label{equnilpo1uj} g(p,v_1(p,q), \cdots ,v_n(p,q),q,u_1(p,q),\cdots ,u_m(p,q),r) \cdot \nonumber \end{equation}  
\begin{equation} g(0,s_1(p,q,r), \cdots ,s_n(p,q,r),0,f_1(p,q,r), \cdots ,f_m(p,q,r),0) \cdot \nonumber \end{equation}
\begin{equation} g(x,0, \cdots ,0,y,0, \cdots ,0,z).   \end{equation} 
From (\ref{equnilpo1uj}) it follows 
\noindent
\begin{equation} t_i = \sum \limits_{k=i}^n (-1)^{k-i} \frac{1}{(k-i)!} [x^{k-i}(s_k(p,q,r)+ v_k(p,q))- p^{k-i} v_k(x,y)]  \nonumber \end{equation}  
\begin{equation} k_j = \sum \limits_{l=j}^m (-1)^{l-j} \frac{1}{(l-j)!} [y^{l-j}(f_l(p,q,r)+ u_l(p,q))- q^{l-j} u_l(x,y)]  \nonumber \end{equation}  
for  all $i=1, \cdots ,n$, $j=1, \cdots ,m$ and 
\begin{equation}  -x (s_1(p,q,r)+v_1(p,q))+ \cdots + \frac{(-1)^{n}}{n!} x^{n} (s_n(p,q,r)+ v_n(p,q))- \nonumber \end{equation}
\begin{equation}  -y (f_1(p,q,r)+u_1(p,q))+ \cdots + \frac{(-1)^{m}}{m!} y^{m} (f_m(p,q,r)+ u_m(p,q))= \nonumber \end{equation} 
\begin{equation} \label{equequniluj3} - p v_1(x,y)+ \frac{p^2}{2!} v_2(x,y)+ \cdots + \frac{(-1)^n}{n!} p^n v_n(x,y) - \nonumber \end{equation} 
\begin{equation} - q u_1(x,y)+ \frac{q^2}{2!} u_2(x,y)+ \cdots + \frac{(-1)^m}{m!} q^m u_m(x,y). \end{equation}
\noindent
Since the right hand side of equation (\ref{equequniluj3}) does not depend on the variable $r$, so is the left hand side and we have $s_i(p,q,r)= s_i(p,q)$, $f_j(p,q,r)= f_j(p,q)$ for all $1 \le i \le n$ $1 \le j \le m$.   
Using this, equation (\ref{equequniluj3}) reduces to (\ref{equequamalgamated}). Putting $y=p=q=0$ into  (\ref{equequamalgamated}) and using the fact that the polynomials $x, x^2, \cdots ,x^n$ are linearly independent we get that 
$s_i(0,0)=0$ for all $1 \le i \le n$. Putting $x=p=q=0$ into  (\ref{equequamalgamated}) and using the fact that the polynomials $y, y^2, \cdots ,y^n$ are linearly independent we get that 
$f_j(0,0)=0$ for all $1 \le j \le m$. This proves the assertion (ii). 
\end{proof} 

\medskip
\noindent
It is easy to see that if we choose for all $1 \le i \le n$ and $1 \le j \le m$ the functions $s_i(p,q)=0$, $f_j(p,q)=0$,  $v_i(x,y)=v_i(x)=a_i x^i$ and $u_j(x,y)=u_j(y)= b_j y^j$, $a_i, b_j \in \mathbb R$, $a_n \neq 0$, $b_m \neq 0$, then equation 
(\ref{equequamalgamated}) is satisfied. Hence the multiplication (\ref{amalgamatedfiliform}) with $v_i(x,y)=v_i(x)$ and $u_j(x,y)=u_j(y)$ gives examples for $3$-dimensional topological loops $L$ for which the group topologically generated by the left translations is isomorphic to the group  ${\mathcal F}_{n+2} \times _Z {\mathcal F}_{m+2}$ and coincides with the multiplication group of $L$.

Author's address: \'Agota Figula \\               
Institute of Mathematics \\ 
University of Debrecen \\
H-4010 Debrecen, P.O.B. 12, Hungary\\            
figula@math.klte.hu                


\end{document}